\documentclass[final,hidelinks]{dmtcs-episciences}
\usepackage{amssymb, amsmath, amsthm, latexsym}
\usepackage{eucal}
\usepackage{ytableau}
\usepackage{undertilde}
\usepackage{cancel}
\usepackage{caption}
\usepackage[pdftex]{graphicx}

\newtheorem{thm}{Theorem}[section]
\newtheorem{cor}[thm]{Corollary}
\newtheorem{lem}[thm]{Lemma}
\newtheorem{prop}[thm]{Proposition}

\newtheorem{fact}[thm]{Fact}
\newtheorem{prob}[thm]{Problem}
\theoremstyle{definition}
\newtheorem{defn}[thm]{Definition}
\theoremstyle{remark}

\numberwithin{equation}{section}

\begin{document}

\title[Parabolic Catalan numbers count flagged Schur functions]{Parabolic Catalan numbers count flagged Schur functions and their appearances as type A Demazure characters (key polynomials)}

\author{Robert A. Proctor\affiliationmark{1} \and Matthew J. Willis\affiliationmark{2}}

\affiliation{University of North Carolina, Chapel Hill, NC 27599 U.S.A. \\
Wesleyan University, Middletown, CT 06457 U.S.A.}

\keywords{Catalan number, Flagged Schur function, Demazure character, Key polynomial, Pattern avoiding permutation, Symmetric group parabolic quotient}

\received{2017-6-20}

\revised{2017-11-10}

\accepted{2017-11-23}

\publicationdetails{19}{2017}{3}{15}{3727}

\maketitle

\begin{abstract}

Fix an integer partition $\lambda$ that has no more than $n$ parts.  Let $\beta$ be a weakly increasing $n$-tuple with entries from $\{1,...,n\}$.  The flagged Schur function indexed by $\lambda$ and $\beta$ is a polynomial generating function in $x_1, ..., x_n$ for certain semistandard tableaux of shape $\lambda$.  Let $\pi$ be an  $n$-permutation.  The type A Demazure character (key polynomial, Demazure polynomial) indexed by $\lambda$ and $\pi$ is another such polynomial generating function.  Reiner and Shimozono and then Postnikov and Stanley studied coincidences between these two families of polynomials.  Here their results are sharpened by the specification of unique representatives for the equivalence classes of indexes for both families of polynomials, extended by the consideration of more general $\beta$, and deepened by proving that the polynomial coincidences also hold at the level of the underlying tableau sets.  Let $R$ be the set of lengths of columns in the shape of $\lambda$ that are less than $n$.  Ordered set partitions of $\{1,...,n\}$ with block sizes determined by $R$, called $R$-permutations, are used to describe the minimal length representatives for the parabolic quotient of the  $n^{th}$ symmetric group specified by the set $\{1,...,n-1\} \backslash R$.  The notion of 312-avoidance is generalized from  $n$-permutations to these set partitions.  The $R$-parabolic Catalan number is defined to be the number of these.  Every flagged Schur function arises as a Demazure polynomial.  Those Demazure polynomials are precisely indexed by the $R$-312-avoiding $R$-permutations.  Hence the number of flagged Schur functions that are distinct as polynomials is shown to be the $R$-parabolic Catalan number.  The projecting and lifting processes that relate the notions of 312-avoidance and of $R$-312-avoidance are described with maps developed for other purposes.

\end{abstract}

\vspace{1pc}\noindent\textbf{MSC Codes.}  {05E15, 05A15, 05E10, 14M15}

\section{Introduction}

This is the second of three papers that develop and use structures which are counted by a ``parabolic'' generalization of Catalan numbers.  Apart from some motivating remarks, it can be read by anyone interested in tableaux or symmetric functions.  It is self-contained, except for some references to its tableau precursors.  Our predecessor paper \cite{PW3} listed roughly a half-dozen parabolic Catalan structures.  Theorem \ref{theorem18.1} here lists roughly a dozen more such structures;  it can be understood in conjunction with this introduction as soon as the definitions of its objects have been read.  Experimental combinatorialists may be interested in Problem \ref{prob14.5}.  Algebraic geometers may be interested in Problem \ref{prob16.1}.

Let  $n \geq 1$  and  $N \geq 1$.  Set  $[n] := \{1,2,...,n\}$.  Fix a partition  $\lambda$  of  $N$  that has no more than  $n$  parts.  Use the symbol  $\lambda$  to also denote the shape of this partition;  it has  $N$  boxes.  Let  $R \subseteq [n-1]$  be the set of lengths of columns in  $\lambda$  that are less than  $n$.  Set  $r := |R|$  and  $J := [n-1] \backslash R$.  Multipermutations of  $n$  non-distinct items whose repetition multiplicities are determined by  $R$  can be used to describe the minimum length representatives in  $W^J$  for the cosets in the quotient  $S_n/W_J$,  where  $S_n$  is the symmetric group and  $W_J$  is its parabolic subgroup specified by  $J$.  Given a semistandard tableau  $T$  of shape  $\lambda$  with values from  $[n]$,  let  $\Theta(T)$  denote the census for these values in  $T$.  Given variables  $x_1, ..., x_n$,  the weight of  $T$  is the monomial  $x^{\Theta(T)}$.  The Schur function  $s_\lambda(x)$  is the sum of the weight  $x^{\Theta(T)}$  over all semistandard tableaux  $T$  of shape $\lambda$.

Flagged Schur functions arose in 1982 when Lascoux and Sch\"{u}tzenberger were studying Schubert polynomials for the  $GL(n)/B$  flag manifold.  Let  $\beta$  be a weakly increasing  $n$-tuple with entries from  $[n]$.  Using the weight  $x^{\Theta(T)}$,  the flagged Schur function  $s_\lambda(\beta;x)$  is usually defined to be the polynomial generating function for the semistandard tableaux with values row-bounded by the entries of  $\beta$.  To exclude the zero polynomial,  we also require  $\beta_i \geq i$.  We call these ``flag Schur polynomials''.  Let  $UF_\lambda(n)$  denote the set of all such  $n$-tuples  $\beta$,  which we call ``upper flags''.  Demazure characters arose in 1974 when Demazure introduced certain  $B$-modules while studying singularities of Schubert varieties in the  $G/B$  flag manifold.  Let  $\pi$  be a permutation of  $[n]$.  From the work of Lascoux and Sch\"{u}tzenberger it is known that the type A Demazure character (key polynomial)  $d_\lambda(\pi;x)$  can be expressed as the polynomial generating function for certain semistandard tableaux specified by  $\pi$,  again using the weight  $x^{\Theta(T)}$.  We call these ``Demazure polynomials''.  Reiner and Shimozono \cite{RS} and then Postnikov and Stanley \cite{PS} studied coincidences between the  $s_\lambda(\beta;x)$  and the  $d_\lambda(\pi;x)$.  We sharpen their results by specifying unique representatives for the equivalence classes of indexes for both families of polynomials,  extend their results by considering more general  $\beta$,  and deepen their results by proving that the polynomial coincidences must also hold at the level of the underlying tableau sets.

Note that  $\lambda$  is strict exactly when  $R = [n-1]$.  Our results become easy or trivial when  $\lambda$  is strict,  and various aspects of our new structures become invisible when   $R = [n-1]$  as these structures reduce to familiar structures.  The number of  312-avoiding permutations of  $[n]$  is the  $n^{th}$ Catalan number  $C_n$;  from our general perspective this fact is occurring in the prototypical  $R= [n-1]$  case.

How many flag Schur polynomials are there,  up to polynomial equality?  We can answer this question via our finer study of the coincidences between flag Schur polynomials and Demazure polynomials,  which is conducted after we have first precisely indexed these families of polynomials.  Reiner and Shimozono showed that every flag Schur polynomial arises as a Demazure polynomial,  and Postnikov and Stanley then showed that the Demazure polynomials that participate in such coincidences can be indexed by the  312-avoiding permutations.  But one cannot deduce from this that the number of distinct flag Schur polynomials is  $C_n$,  since they ``loosely'' indexed their Demazure polynomials by permutations when  $R \subset [n-1]$.  It is not hard to see that the Demazure polynomials are precisely indexed by the cosets in  $W/W_J$.  We depict these cosets with standard forms for ordered set partitions of  $[n]$  whose  $r+1$  block sizes are determined by  $R$,  and refer to these standard forms as ``$R$-permutations''.  (The  $R$-permutations can be thought of as being ``inverses'' for the multipermutations mentioned above.)  While permutations precisely index the Schubert varieties in the full flag manifold  $GL(n)/B$,  one needs  $R$-permutations to precisely index the Schubert varieties in the general flag manifold  $GL(n)/P_J$.  (Here  $P_J$  is the parabolic subgroup  $BW_JB$  of  $GL(n)$.)  In \cite{PW3} we introduced a notion of  ``$R$-312-avoidance'' for the  $R$-permutations and defined the  $R$-parabolic Catalan number  $C_n^R$  to be the number of such  $R$-permutations.  Here we show that the Demazure polynomials that participate in coincidences are precisely indexed by the  $R$-312-avoiding  $R$-permutations.  This implies that the number of distinct flag Schur polynomials on the shape  $\lambda$  is  $C_n^R$.

As we were writing the earlier version \cite{PW2} of this paper, we learned that Godbole, Goyt, Herdan, and Pudwell had recently introduced \cite{GGHP} a notion of pattern avoidance by ordered partitions like ours,  but for general patterns.  They apparently developed this notion purely for enumerative motivations.  Chen, Dai, and Zhou soon obtained \cite{CDZ} further enumerative results for these pattern avoiding ordered partitions.  Our observations that  $C_n^R$  counts flag Schur polynomials (Theorem \ref{theorem18.1}(\ref{theorem18.1.4})) and convex Demazure tableau sets \cite{PW3} are apparently the first observances of these counts ``in nature''.  See Section 8 of \cite{PW3} for further remarks on \cite{GGHP} and \cite{CDZ}.

To obtain a complete picture for the coincidences,  it is necessary to develop precise indexes for the flag Schur polynomials as well.  When are two flag Schur polynomials to be regarded as being ``the same''?  Postnikov and Stanley apparently regarded two flag Schur polynomials  $s_\lambda(\beta;x)$  and  $s_{\lambda'}(\beta';x)$  to be the same only when  $\lambda = \lambda'$  and  $\beta = \beta'$:  On p. 158 of \cite{PS} it was stated that there are  $C_n$  flag Schur polynomials for any shape  $\lambda$.  For  $n = 3$  and  $\lambda = (1,1,0)$,  note that  $s_\lambda((3,3,3);x) = x_1x_2 + x_1x_3 + x_2x_3 = s_\lambda((2,3,3);x)$.  So for this  $\lambda$  there cannot be  $C_3 = 5$  flag Schur polynomials that are distinct as polynomials.

In addition to the notion of polynomial equality,  we consider a second way in which two of the polynomials considered here can be ``the same''.  Let  $P_\lambda$  and  $Q_{\lambda'}$  be sets of semistandard tableaux with values from  $[n]$  that are respectively of shape  $\lambda$  and of shape  $\lambda'$.  Using the weight  $x^{\Theta(T)}$,  let  $p_\lambda(x)$  and  $q_{\lambda'}(x)$  be the corresponding polynomial generating functions.  We define  $p_\lambda(x)$  and  $q_{\lambda'}(x)$  to be ``identical as generating functions'' if  $P_\lambda = Q_{\lambda'}$  as sets.  Then we write  $p_\lambda(x) \equiv q_{\lambda´}(x)$;  clearly we must have  $\lambda = \lambda'$  here.  If  $p_\lambda(x) = q_{\lambda'}(x)$  while  $p_\lambda(x) \hspace{1mm} \cancel{\equiv} \hspace{1mm} q_{\lambda'}(x)$,  we say that  $p_\lambda(x)$  and  $q_{\lambda'}(x)$  are ``accidentally equal''.  Denoting the underlying tableau sets for  $s_\lambda(\beta;x)$  and  $s_{\lambda'}(\beta';x)$  by  $\mathcal{S}_\lambda(\beta)$  and  $\mathcal{S}_{\lambda'}(\beta')$,  we write  $\beta \approx_\lambda \beta'$  when  $\mathcal{S}_\lambda(\beta) = \mathcal{S}_{\lambda'}(\beta')$.  This is an equivalence relation on $UF_\lambda(n)$.  Denote the underlying tableau set for  $d_\lambda(\pi;x)$  by  $\mathcal{D}_\lambda(\pi)$.  We show that the polynomial coincidences actually hold at the deeper level of the underlying tableau sets:  Whenever  $s_\lambda(\beta;x) = d_\lambda(\pi;x)$,  it must be the case that  $s_\lambda(\beta;x) \equiv d_\lambda(\pi;x)$  (or  $\mathcal{S}_\lambda(\beta) = \mathcal{D}_\lambda(\pi))$.  It is not hard to see that Demazure polynomials indexed by distinct  $R$-permutations cannot be accidentally equal.  Since every flag Schur polynomial arises as a Demazure polynomial, we can conclude that two flag Schur polynomials cannot be accidentally equal.

To facilitate the development of the clearest picture of the coincidences,  when we began this project we specified unique representatives for the equivalence classes of row bounds  $\beta$  indexing the tableau sets  $\mathcal{S}_\lambda(\beta)$.  This specification process led us to reconsider the set of row bounds being used:  The weakly increasing requirement on  $\beta$  seemed needlessly restrictive,  especially when  $\lambda$  is not strict.  We decided to initially require only  $\beta_i \geq i$,  to exclude the zero polynomial.  Let  $U_\lambda(n)$  denote the set of all such  $n$-tuples.  For any  $\beta \in U_\lambda(n)$,  we define the ``row bound sum''  $s_\lambda(\beta;x)$  in the same manner as the flag Schur polynomial  $s_\lambda(\beta;x$).  We extend $\approx_\lambda$ to $U_\lambda(n)$.  For a given row bound sum generating function, the most efficient $n$-tuple $\beta$ of row bounds is increasing on any subinterval of $[n]$ for which the row lengths in $\lambda$ are constant.  If such a locally increasing  $n$-tuple is equivalent to some upper flag,  we call it a ``gapless  $\lambda$-tuple''.  Such  $n$-tuples are the most efficient row bound indexes to use when the flag Schur polynomials are being viewed more generally as row bound sums.  Let  $UGC_\lambda(n)$  be the subset of  $U_\lambda(n)$  consisting of all  $n$-tuples that are equivalent to some upper flag.  This is the largest set of row bound indexes for viewing flag Schur polynomials as row bound sums.  When  $\beta \in UGC_\lambda(n)$,  we call the row bound sum  $s_\lambda(\beta;x)$  a ``gapless core Schur polynomial''.  We honor these row bound sums with the name `Schur' since they inherit from flag Schur polynomials the properties of arising as Demazure polynomials and (therefore) not suffering from accidental equalities.  In contrast,  as is noted in Problem \ref{prob14.5},  we do not know how to rule out accidental equalities among general row bound sums.  In Sections \ref{623} and \ref{608} we describe the equivalence classes of  $\approx_\lambda$  in each of  $U_\lambda(n)  \supseteq  UGC_\lambda(n)  \supseteq  UF_\lambda(n)$  and present two systems of unique representatives in each of the three cases.

When studying the ``key'' of shape  $\lambda$  for an  $R$-permutation in \cite{PW3},  we introduced the  rank  $\lambda$-tuple map  $\Psi_\lambda$  and its inverse  $\Pi_\lambda$.  When studying the equivalence  $\approx_\lambda$  here,  we introduce the  $\lambda$-core map  $\Delta_\lambda$  on  $U_\lambda(n)$  and the notion of the ``critical list'' of  $\beta \in U_\lambda(n)$.  When $\lambda$ is not strict, the calculations involving $\beta$ usually do not need to refer to all of its entries.  The critical list of $\beta$ contains only its essential entries.  Using the maps  $\Psi_\lambda,  \Pi_\lambda$,  and $\Delta_\lambda$  and our precise indexing schemes,  we state our main results in Theorems \ref{theorem721} and \ref{theorem737.2}.  These completely describe the possible coincidences between the sets  $\mathcal{S}_\lambda(\beta)$  and  $\mathcal{D}_\lambda(\pi)$  and between the polynomials  $s_\lambda(\beta;x)$  and  $d_\lambda(\pi;x)$.  The equality   $\Delta_\lambda(\beta) = \gamma = \Psi_\lambda(\pi)$  is the central requirement for a coincidence between a gapless core Schur polynomial  $s_\lambda(\beta;x)$  and an  $R$-312-avoiding Demazure polynomial  $d_\lambda(\pi;x)$:  Here  $\beta \in UGC_\lambda(n)$,  $\gamma$  is a gapless  $\lambda$-tuple,  and  $\pi$  is an  $R$-312-avoiding  $R$-permutation.  Our foremost tools for proving the main results here were the two main results of the predecessor \cite{PW3} to this paper:  Being indexed by an  $R$-312-avoiding  $R$-permutation  $\pi$  is necessary and sufficient for a Demazure tableau set  $\mathcal{D}_\lambda(\pi)$  to be convex in  $\mathbb{Z}^N$.  These form Theorem \ref{theorem520} below.  Those two results were in turn made possible by our earlier development of tractable descriptions of all of the sets  $\mathcal{D}_\lambda(\pi)$  in \cite{Wi1} and \cite{PW1} using the scanning method of \cite{Wi1} for finding the ``right key'' of a tableau of shape  $\lambda$.

The referee for this paper remarked that our tableau convexity Theorem \ref{theorem520} is roughly paralleled by an aspect of Theorem 1.2 of \cite{KST} for Gelfand-Zetlin patterns, and that that result gives another perspective for considering coincidences between flag Schur polynomials and Demazure polynomials.  See Section 12 below.

What is the relationship between the notions of  312-avoidance for permutations and of  $R$-312-avoidance for  $R$-permutations?  Surprisingly,  the lifting and projecting answers to this question in Propositions \ref{prop824.2} and \ref{prop824.4} are expressed in terms of the maps  $\Psi_\lambda,  \Pi_\lambda$,  and  $\Delta_\lambda$ that were developed for other reasons.

Each of the two main themes of this series of papers is at least as interesting to us as is any one of the stated results in and of itself.  One of the main themes is that many of the new structures and statements arising that are parameterized by  $n$  and  $\lambda$  or  $R$  are equinumerous with the  $R$-312-avoiding  $R$-permutations.  It is reasonable to view this common count as being a ``parabolic'' generalization of the Catalan number,  since the notion of  312-avoidance is being generalized from  $n$-permutations to representatives for the cosets in the parabolic quotient  $S_n/W_J$.  (As noted at the end of \cite{PW3},  M\"{u}hle and Williams have recently independently proposed another parabolic generalization of the numbers  $C_n$.)  The other main theme of this series of papers is the ubiquity of some of the structures that are counted by the parabolic Catalan numbers.  The gapless  $\lambda$-tuples arose as the images of the  $R$-312-avoiding  $R$-permutations under the  $R$-ranking map in \cite{PW3} and they arise here as the minimum members of the equivalence classes for the indexing  $n$-tuples of the flag Schur polynomials.  Moreover,  the  $\lambda$-gapless condition provides half of the solution to the nonpermutability problem considered in our third paper \cite{PW4}.  Since the gapless  $\lambda$-tuples and the structures equivalent to them are enumerated by a parabolic generalization of Catalan numbers,  it would not be surprising if they were to arise in further contexts.

Proposition \ref{prop737}(\ref{prop737.1}) says that if  $s_\lambda(\beta;x) = s_{\lambda'}(\beta´;x)$  are two row bound sum polynomials,  then  $\lambda = \lambda'$.  Corollary \ref{newcor737}(\ref{newcor737.1}) says that one cannot then also have that $s_\lambda(\beta;x)$ and $s_{\lambda'}(\beta';x)$ are not identical as generating functions,  unless perhaps  $\beta,\beta' \in U_\lambda(n) \backslash UGC_\lambda(n)$.  If such a possibility can be ruled out by proving the non-existence of the counterexample sought by Problem \ref{prob14.5},  then Problem \ref{prob16.1} seeks an explanation for the following coincidence:  It would then be known that the number of row bound sum polynomials that cannot equal a Demazure polynomial is equal to the number of Demazure polynomials that cannot equal a row bound sum polynomial.  Further remarks on this problem and on the distinctness of the row bound sums appear in Section 12 and in Section 16 of \cite{PW2}.  In Section \ref{824B} we also relate our results on polynomial coincidences in Section \ref{737} to the results in \cite{RS} and \cite{PS};  the lifting and projecting results of Section \ref{824} are used here.  Section \ref{824B} closes with a preview of the third paper \cite{PW4} in this series.  It is surprising that gapless  $\lambda$-tuples are used there to characterize the applicability of the Gessel-Viennot method for computing row bound sums with determinants.

For this introduction,  we fixed a partition  $\lambda$  and used its shape to form a set  $R \subseteq [n-1]$.  In such a context this set should have been more precisely denoted  $R_\lambda$.  However,  many of our structures and results depend only upon some subset  $R \subseteq [n-1]$  and not upon any  $\lambda$:  The content of Sections \ref{300}, \ref{604}, \ref{608}, and \ref{824} and of parts of Sections \ref{320} and \ref{1800} are purely  $R$-theoretic.  Elsewhere  $R$  depends upon the fixed partition  $\lambda$;  then (as is noted near the end of Section \ref{800}) to avoid clutter we index the  $R_\lambda$-dependent quantities with `$\lambda$' rather than with `$R_\lambda$'.

The material in this paper first appeared as two-thirds of the manuscript \cite{PW2};  the other one-third of \cite{PW2} now constitutes \cite{PW3}.

Most definitions appear in Sections \ref{200}, \ref{300}, and \ref{800}.  The  $R$-core map   $\Delta_R$  and the critical list of  $\beta \in U_R(n)$  are studied in Section \ref{604}.  In Section \ref{320} the results needed from the predecessor paper \cite{PW3} are quoted.  Section \ref{623} sets the stage for our main results in Sections \ref{721} and \ref{737}:  The row bound tableau sets  $\mathcal{S}_\lambda(\beta)$  are introduced,  and using these the equivalence relation  $\approx_\lambda$  is defined on  $U_\lambda(n)$.  Given  $\beta \in U_\lambda(n)$,  a maximization process in \cite{RS} produced a tableau that we denote $Q_\lambda(\beta)$.  We introduce another maximization process in Section \ref{800} that produces a tableau denoted  $M_\lambda(\beta)$.  Proposition \ref{prop623.8}(\ref{prop623.8.2}) says that  $M_\lambda[\Delta_\lambda(\beta)] = Q_\lambda(\beta)$.  For a gapless  $\lambda$-tuple  $\gamma$,  Theorem 7.4 (from \cite{PW3}) says that  $M_\lambda(\gamma)$  is the key of shape  $\lambda$  of an  $R$-312-avoiding  $R$-permutation  $\pi$.  These two results provide the foundation for the bridge from the row bound tableau sets  $\mathcal{S}_\lambda(\beta)$  for  $\beta \in UGC_\lambda(n)$ to the  $R$-312-avoiding Demazure tableau sets  $\mathcal{D}_\lambda(\pi)$.

\section{General definitions}\label{200}

In posets we use interval notation to denote principal ideals and convex sets.  For example, in $\mathbb{Z}$ one has $(i, k] = \{i+1, i+2, ... , k\}$.  Given an element $x$ of a poset $P$, we denote the principal ideal $\{ y \in P : y \leq x \}$ by $[x]$.  When $P = \{1 < 2 < 3 < ... \}$, we write $[1,k]$ as $[k]$.  If $Q$ is a set of integers with $q$ elements, for $d \in [q]$ let $rank^d(Q)$ be the $d^{th}$ largest element of $Q$.  We write $\max(Q) := rank^1(Q)$ and $\min(Q) := rank^q(Q)$.  A set $\mathcal{D} \subseteq \mathbb{Z}^N$ for some $N \geq 1$ is a \emph{convex polytope} if it is the solution set for a finite system of linear inequalities.

Fix $n \geq 1$ throughout the paper.  Except for $\zeta$, various lower case Greek letters indicate various kinds of $n$-tuples of non-negative integers.  Their entries are denoted with the same letter.  An $nn$-\emph{tuple} $\nu$ consists of $n$ \emph{entries} $\nu_i \in [n]$ that are indexed by \emph{indices} $i \in [1,n]$, which together form $n$ \emph{pairs} $(i, \nu_i)$.  The 9-tuples in Table 3.1 are $nn$-tuples for $n = 9$.  Let $P(n)$ denote the poset of $nn$-tuples ordered by entrywise comparison.  It is a distributive lattice with meet and join given by entrywise min and max.  Fix an $nn$-tuple $\nu$.  A \emph{subsequence} of $\nu$ is a sequence of the form $(\nu_i, \nu_{i+1}, ... , \nu_j)$ for some $i, j \in [n]$.  The \emph{support} of this subsequence of $\nu$ is the interval $[i,j]$.  The \emph{cohort} of this subsequence of $\nu$ is the multiset $\{ \nu_k : k \in [i,j] \}$.  A \emph{staircase of $\nu$ within a subinterval $[i,j]$} for some $i, j \in [n]$ is a maximal subsequence of $(\nu_i, \nu_{i+1}, ... , \nu_j)$ whose entries increase by 1.  A \emph{plateau} in $\nu$ is a maximal constant nonempty subsequence of $\nu$;  it is \emph{trivial} if it has length 1.

An $nn$-tuple $\phi$ is a \emph{flag} if $\phi_1 \leq \ldots \leq \phi_n$.  The set of flags is a sublattice of $P(n)$;  it is essentially the lattice denoted $L(n,n)$ by Stanley.  An \emph{upper tuple} is an $nn$-tuple $\upsilon$ such that $\upsilon_i \geq i$ for $i \in [n]$.  The upper flags are the sequences of the $y$-coordinates for the above-diagonal Catalan lattice paths from $(0, 0)$ to $(n, n)$.  A \emph{permutation} is an $nn$-tuple that has distinct entries.  Let $S_n$ denote the set of permutations.  A permutation $\pi$ is $312$-\emph{avoiding} if there do not exist indices $1 \leq a < b < c \leq n$ such that $\pi_a > \pi_b < \pi_c$ and $\pi_a > \pi_c$.  Let $S_n^{312}$ denote the set of 312-avoiding permutations.  By Exercises 6.19(h) and 6.19(ff) of \cite{St2}, these permutations and the upper flags are counted by the Catalan number $C_n := \frac{1}{n+1}{2n \choose n}$.

Tableau and shape definitions are in Section \ref{800}; polynomials definitions are in Section \ref{737}.

\section{Carrels, cohorts, $\mathbf{\emph{R}}$-tuples, maps of $\mathbf{\emph{R}}$-tuples}\label{300}

Fix $R \subseteq [n-1]$ through the end of Section \ref{608}.  Denote the elements of $R$ by $q_1 < \ldots < q_r$ for some $r \geq 0$.  Set $q_0 := 0$ and $q_{r+1} := n$.  We use the $q_h$ for $h \in [r+1]$ to specify the locations of $r+1$ ``dividers'' within $nn$-tuples:  Let $\nu$ be an $nn$-tuple.  On the graph of $\nu$ in the first quadrant draw vertical lines at $x = q_h + \epsilon$ for $h \in [r+1]$ and some small $\epsilon > 0$.  These $r+1$ lines indicate the right ends of the $r+1$ \emph{carrels} $(q_{h-1}, q_h]$ \emph{of $\nu$} for $h \in [r+1]$.  An \emph{$R$-tuple} is an $nn$-tuple that has been equipped with these $r+1$ dividers.  Fix an $R$-tuple $\nu$; as in Table 3.1 we portray it by $(\nu_1, ... , \nu_{q_1} ; \nu_{q_1+1}, ... , \nu_{q_2}; ... ; \nu_{q_r+1}, ... , \nu_n)$.  Let $U_R(n)$ denote the sublattice of $P(n)$ consisting of upper $R$-tuples.  Let $UF_R(n)$ denote the sublattice of $U_R(n)$ consisting of upper flags.  Fix $h \in [r+1]$.  The $h^{th}$ carrel has $p_h := q_h - q_{h-1}$ indices.  The $h^{th}$ \emph{cohort} of $\nu$ is the multiset of entries of $\nu$ on the $h^{th}$ carrel.

An \emph{$R$-increasing tuple} is an $R$-tuple $\alpha$ that is increasing on each carrel.  Let $UI_R(n)$ denote the sublattice of $U_R(n)$ consisting of $R$-increasing upper tuples.  Consult Table 3.1 for an example and a counterexample.  Boldface entries indicate failures.  It can be seen that $|UI_R(n)| = \prod_{h=1}^{r+1} {{n-q_{h-1}} \choose {p_h}} = n! / \prod_{h=1}^{r+1} p_h! =: {n \choose {p_1 \hspace{.2pc} \dots \hspace{.2pc} p_{r+1}}} =: {n \choose R}$.  An $R$-\emph{permutation} is a permutation that is $R$-increasing when viewed as an $R$-tuple.  Let $S_n^R$ denote the set of $R$-permutations.  Note that $| S_n^R| = {n \choose R}$.  We refer to the cases $R = \emptyset$ and $R = [n-1]$ as the \emph{trivial} and \emph{full cases} respectively.  Here $| S_n^\emptyset | = 1$ and $| S_n^{[n-1]} | = n!$ respectively.  Given a permutation $\sigma \in S_n$, its \emph{$R$-projection} $\bar{\sigma} \in S^R_n$ is the $R$-increasing tuple obtained by sorting its entries in each cohort into increasing order within their carrel.  An $R$-permutation $\pi$ is $R$-$312$-\emph{containing} if there exists $h \in [r-1]$ and indices $1 \leq a \leq q_h < b \leq q_{h+1} < c \leq n$ such that $\pi_a > \pi_b < \pi_c$ and $\pi_a > \pi_c$.  An $R$-permutation is $R$-$312$-\emph{avoiding} if it is not $R$-$312$-containing.  (This is equivalent to the corresponding multipermutation being 231-avoiding.)  Let $S_n^{R\text{-}312}$ denote the set of $R$-312-avoiding permutations.  We define the \emph{$R$-parabolic Catalan number} $C_n^R$ by $C_n^R := |S_n^{R\text{-}312}|$.

\begin{figure}
\centering
\begin{tabular}{lccc}
\underline{Type of $R$-tuple} & \underline{Set} & \underline{Example} & \underline{Counterexample} \\ \\
Upper $R$-increasing tuple & $\alpha \in UI_R(n)$ & $(2,6,7;4,5,7,8,9;9)$ & $(3,5,\textbf{5};6,\textbf{4},7,8,9;9)$ \\ \\
$R$-312-avoiding permutation & $\pi \in S_n^{R\text{-}312}$  & $(2,3,6;1,4,5,8,9;7)$ & $(2,4,\textbf{6};1,\textbf{3},7,8,9;\textbf{5})$ \\ \\
Gapless core $R$-tuple & $\eta \in UGC_R(n)$ & $(4,5,5; 4,8,7,8,8;9)$ & $(4,5,5;\textbf{4},8,7,8,\textbf{9};9)$ \\ \\
Gapless $R$-tuple & $\gamma \in UG_R(n)$ & $(2,4,6;4,5,6,7,9;9)$ & $(2,4,6;\textbf{4},\textbf{6},7,8,9;9)$ \\ \\
$R$-floor flag & $\tau \in UFlr_R(n)$  & $(2,4,5;5,5,6,8,9;9)$ & $(2,4,5;5,5,\textbf{8},\textbf{8},9;9)$ \\ \\
$R$-ceiling flag & $\xi \in UCeil_R(n)$ & $(1,4,4;5,5,9,9,9;9)$ & $(1,4,4;5,5,\textbf{7},\textbf{8},9;9)$ \\ \\
\end{tabular}

\caption*{Table 3.1.  (Counter-)Examples of R-tuples for $n = 9$ and $R = \{3,8\}$.}
\end{figure}

An $R$-\emph{chain} $B$ is a sequence of sets $\emptyset =: B_0 \subset B_1 \subset \ldots \subset B_r \subset B_{r+1} := [n]$ such that $|B_h| = q_h$ for $h \in [r]$.  A bijection from $R$-permutations $\pi$ to $R$-chains $B$ is given by $B_h := \{\pi_1, \pi_2, \ldots, \pi_{q_h}\}$ for $h \in [r]$.  We indicate it by $\pi \mapsto B$.  Fix an $R$-permutation $\pi$ and let $B$ be the corresponding $R$-chain.  For $h \in [r+1]$, the set $B_h$ is the union of the first $h$ cohorts of $\pi$.  Note that $R$-chains $B$ (and hence $R$-permutations $\pi$) are equivalent to the ${n \choose R}$ objects that could be called ``ordered $R$-partitions of $[n]$''; these arise as the sequences $(B_1 \backslash B_0, B_2\backslash B_1, \ldots, B_{r+1}\backslash B_r)$ of $r+1$ disjoint nonempty subsets of sizes $p_1, p_2, \ldots, p_{r+1}$.  Now create an $R$-tuple $\Psi_R(\pi) =: \psi$ as follows:  For $h \in [r+1]$ specify the entries in its $h^{th}$ carrel by $\psi_i := \text{rank}^{q_h-i+1}(B_h)$ for $i \in (q_{h-1},q_h]$.  For a model, imagine there are $n$ discus throwers grouped into $r+1$ heats of $p_h$ throwers for $h \in [r+1]$.  Each thrower gets one throw, the throw distances are elements of $[n]$, and there are no ties.  After the $h^{th}$ heat has been completed, the $p_h$ longest throws overall so far are announced in ascending order.  See Table 3.2.  We call $\psi$ the \emph{rank $R$-tuple of $\pi$}.  As well as being $R$-increasing, it can be seen that $\psi$ is upper:  So $\psi \in UI_R(n)$.

\begin{figure}[h!]
\centering
\begin{tabular}{lccc}
\underline{Name} & \underline{From/To} & \underline{Input} & \underline{Image} \\ \\
Rank $R$-tuple & $\Psi_R:  S_n^R \rightarrow  UI_R(n)$ & $(2,4,6;1,5,7,8,9;3)$   & $(2,4,6;5,6,7,8,9;9)$ \\ \\
$R$-core & $\Delta_R: U_R(n) \rightarrow UI_R(n)$ & $(7,9,6;5,5,9,8,9;9)$ & $(4,5,6;4,5,7,8,9;9)$ \\ \\
Undoes $\Psi_R|_{S_n^{R\text{-}312}}$ & $\Pi_R:  UG_R(n) \rightarrow S_n^{R\text{-}312}$ & $(2,4,6;4,5,6,7,9;9)$  & $(2,4,6;1,3,5,7,9;8)$ \\ \\
$R$-floor & $\Phi_R: UG_R(n) \rightarrow UFlr_R(n)$ & $(3,4,6;4,5,6,8,9;9)$  & $(3,4,6;6,6,6,8,9;9)$ \\ \\
$R$-ceiling & $\Xi_R: UG_R(n) \rightarrow UCeil_R(n)$ & $(3,4,5;4,5,6,8,9;9)$  & $(5,5,5;6,6,6,9,9;9)$ \\ \\
\end{tabular}\caption*{Table 3.2.  Examples for maps of $R$-tuples for $n = 9$ and $R = \{3, 8 \}$.}
\end{figure}

We distill the crucial information from an upper $R$-tuple into a skeletal substructure called its ``critical list'', and at the same time define the \emph{$R$-core} map $\Delta_R: U_R(n) \rightarrow UI_R(n)$.  Start to refer to Figure 3.1.  For motivation read the sentences following Proposition \ref{prop623.8}.  Fix $\upsilon \in U_R(n)$.  To launch a running example, take $n := 9, R := \{3, 8 \}$, and $\upsilon := (2,7,5;8,6,6,9,9;9)$.  We will specify the image $\delta := \Delta_R(\upsilon)$.  Fix $h \in [r+1]$.  Working within the $h^{th}$ carrel $(q_{h-1}, q_h]$ from the right we recursively find for $u = 1, 2, ...$ :  At $u = 1$ the \emph{rightmost critical pair of $\upsilon$ in the $h^{th}$ carrel} is $(q_h, \upsilon_{q_h})$.  Set $x_1 := q_h$.  Recursively attempt to increase $u$ by 1:  If it exists, the \emph{next critical pair to the left} is $(x_u, \upsilon_{x_u})$, where $q_{h-1} < x_u < x_{u-1}$ is maximal such that $\upsilon_{x_{u-1}} - \upsilon_{x_u} > x_{u-1} - x_u$.  For $x_u < i \leq x_{u-1}$, write $x_{u-1} =: x$ and set $\delta_i := \upsilon_x - (x-i)$.  Otherwise, let $f_h \geq 1$ be the last value of $u$ attained.  For $q_{h-1} < i \leq x_{f_h}$, write $x_{f_h} =: x$ and again set $\delta_i := \upsilon_x - (x-i)$.  The \emph{set of critical pairs of $\upsilon$ for the $h^{th}$ carrel} is $\{ (x_u, \upsilon_{x_u}) : u \in [f_h] \} =: \mathcal{C}_h$.  Equivalently, here $f_h$ is maximal such that there exists indices $x_1, x_2, ... , x_{f_h}$ such that $q_{h-1} < x_{f_h} < ... < x_1 = q_h$ and $\upsilon_{x_{u-1}} - \upsilon_{x_u} > x_{u-1} - x_u$ for $u \in (1, f_h]$.  If $\upsilon \in UI_R(n)$, the $h^{th}$ carrel subsequence of $\upsilon$ is a concatenation of staircases in which the largest entries are the critical entries $\upsilon_{x_u}$.  The \emph{$R$-critical list for $\upsilon$} is the sequence $(\mathcal{C}_1, ... , \mathcal{C}_{r+1}) =: \mathcal{C}$ of its $r+1$ sets of critical pairs.  In our example $\mathcal{C} = ( \{ (1,2), (3,5) \}; \{(6,6),(8,9)\}; \{(9,9)\})$ and $\delta = (2,4,5;4,5,6,8,9;9)$.

\begin{figure}
  \begin{center}
  \includegraphics[scale=.85]{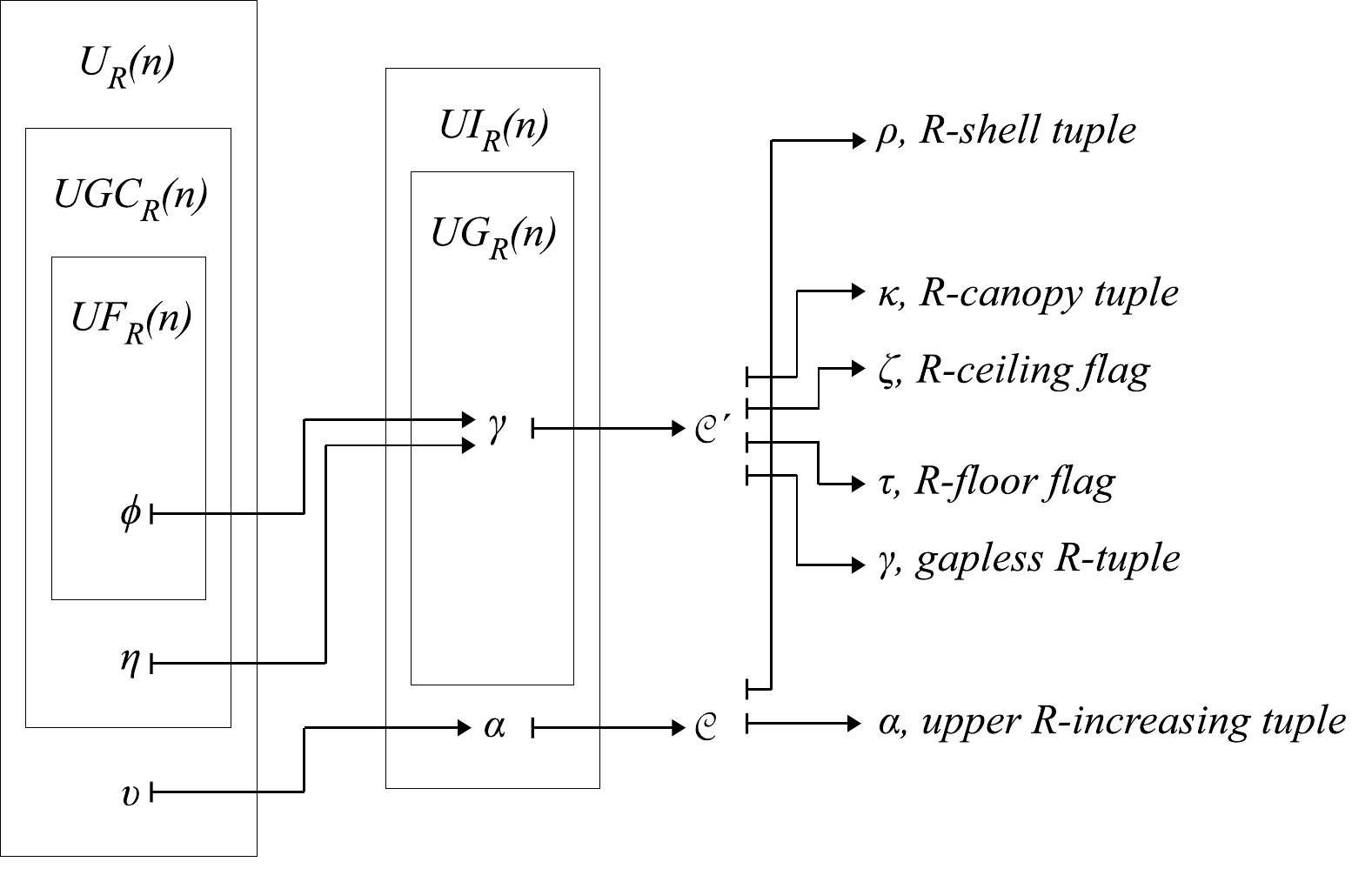}
  \caption*{Figure 3.1.  Apply the $R$-core map $\Delta_R$ and read off the $R$-critical list $\mathcal{C}$ (or the flag $R$-critical list $\mathcal{C}'$).  As in Corollary 4.4, form the two (four) standard forms that serve as class representatives in Corollary 5.3.}
  \end{center}
\end{figure}

Without having any $\upsilon$ specified, for $h \in [r+1]$ we define a set $\{ (x_u, y_{x_u}) : u \in [f_h] \} =: \mathcal{C}_h$ of pairs for some $f_h \in [p_h]$ to be a \emph{set of critical pairs} for the $h^{th}$ carrel if:  $x_u, y_{x_u} \in [n]$, $x_u \leq y_{x_u}, q_{h-1} < x_{f_h} < ... < x_1 = q_h$, and $y_{x_{u-1}} - y_{x_u} > x_{u-1} - x_u$ for $u \in (1, f_h]$.  A sequence of $r+1$ sets of critical pairs for all of the carrels is an \emph{$R$-critical list}.  The $R$-critical list of a given $\upsilon \in U_R(n)$ is an $R$-critical list.  If $(x, y_x)$ is a critical pair, we call $x$ a \emph{critical index} and $y_x$ a \emph{critical entry}.

We say that an $R$-critical list is a \emph{flag $R$-critical list} if whenever $h \in [r]$ we have $y_{q_h} \leq y_k$, where $k := x_{f_{h+1}}$.  This condition can be restated as requiring that the sequence of all of its critical entries be weakly increasing.  A \emph{gapless core $R$-tuple} is an upper $R$-tuple $\eta$ whose $R$-critical list is a flag $R$-critical list; the name comes from Proposition \ref{prop604.4}(\ref{prop604.4.3}) and the original definition of ``gapless'' presented below.  Let $UGC_R(n)$ denote the set of gapless core $R$-tuples.  The example $\upsilon$ is a gapless core $R$-tuple, since its critical list is a flag critical list.  Clearly $UF_R(n) \subseteq UGC_R(n)$.

The first part of the following definition repeats an earlier definition for the sake of symmetry.  These six kinds of $nn$-tuples will be seen in Proposition \ref{prop604.6} to arise from extending (flag) $R$-critical lists in various unique ways:

\begin{defn}Let $R \subseteq [n-1]$.
\begin{enumerate}[(i)]\setlength\itemsep{-.5em}

\item We said that $\alpha \in U_R(n)$ is an $R$-increasing upper tuple if it is increasing on each carrel.

\item We say that $\rho \in U_R(n)$ is an \emph{$R$-shell tuple} if $\rho_i = n$ for every non-critical index $i$ of $\rho$.

\item We say that $\gamma \in UGC_R(n)$ is a \emph{gapless} $R$-tuple if it is an $R$-increasing tuple.

\item We say that $\kappa \in UGC_R(n)$ is an \emph{$R$-canopy tuple} if it is an $R$-shell tuple.

\item We say that $\tau \in UF_R(n)$ is an \emph{$R$-floor flag} if the leftmost pair of each non-trivial plateau in $\tau$ has the form $(q_h, \tau_{q_h})$ for some $h \in [r]$.

\item We say that $\xi \in UF_R(n)$ is an \emph{$R$-ceiling flag} if it is a concatenation of plateaus whose rightmost pairs are the $R$-critical pairs of $\xi$.  \end{enumerate} \end{defn}

\noindent The example $\delta$ above is a gapless $R$-tuple.  Let $UG_R(n)$, $UFlr_R(n)$, and $UCeil_R(n)$ respectively denote the sets of gapless $R$-tuples, $R$-floor flags, and of $R$-ceiling flags.

In \cite{PW3} we defined a gapless $R$-tuple to be an $R$-increasing upper tuple $\gamma$ such that whenever there exists $h \in [r]$ with $\gamma_{q_h} > \gamma_{q_h+1}$, then $\gamma_{q_h} - \gamma_{q_h+1} + 1 =: s \leq p_{h+1}$ and the first $s$ entries of the $(h+1)^{st}$ carrel $(q_h, q_{h+1} ]$ are $\gamma_{q_h}-s+1, \gamma_{q_h}-s+2, ... , \gamma_{q_h}$.  This definition is equivalent to requiring for all $h \in [r]$:  If $\gamma_{q_h} > \gamma_{q_{h}+1}$, then the leftmost staircase within the $(h+1)^{st}$ carrel must contain an entry $\gamma_{q_h}$.  In Proposition \ref{prop604.4}(\ref{prop604.4.3}) we show that that definition is equivalent to the present one.  In Proposition 4.6(ii) of \cite{PW3} we showed that the restriction of $\Psi_R$ to $S_n^{R\text{-}312}$ is a bijection to $UG_R(n)$ whose inverse is the following map $\Pi_R$:  Let $\gamma \in UG_R(n)$.  See Table 3.2.  Define an $R$-tuple $\Pi_R(\gamma) =: \pi$ by:  Initialize $\pi_i := \gamma_i$ for $i \in (0,q_1]$.  Let $h \in [r]$.  If $\gamma_{q_h} > \gamma_{q_h+1}$, set $s:= \gamma_{q_h} - \gamma_{q_h+1} + 1$.  Otherwise set $s := 0$.  For $i$ in the right side $(q_h + s, q_{h+1}]$ of the $(h+1)^{st}$ carrel, set $\pi_i := \gamma_i$.  For $i$ in the left side $(q_h, q_h + s]$, set $d := q_h + s - i + 1$ and $\pi_i := rank^d( \hspace{1mm} [\gamma_{q_h}] \hspace{1mm} \backslash \hspace{1mm} \{ \pi_1, ... , \pi_{q_h} \} \hspace{1mm} )$.  In words:  working from right to left, fill in the left side by first finding the largest element of $[\gamma_{q_h}]$ not used by $\pi$ so far, then the next largest, and so on.  In Table 3.2 when $h=1$ the elements 5, 3, 1 are found and placed into the 6$^{th}$, 5$^{th}$, and 4$^{th}$ positions.  (Since $\gamma$ is a gapless $R$-tuple, when $s \geq 1$ we have $\gamma_{q_h + s} = \gamma_{q_h}$.  Since `gapless' includes the upper property, here we have $\gamma_{q_h +s} \geq q_h + s$.  Hence $| \hspace{1mm} [\gamma_{q_h}] \hspace{1mm} \backslash \hspace{1mm} \{ \pi_1, ... , \pi_{q_h} \} \hspace{1mm} | \geq s$, and so there are enough elements available to define these left side $\pi_i$. )  Since $\gamma_{q_h} \leq \gamma_{q_{h+1}}$, it can inductively be seen that $\max\{ \pi_1, ... , \pi_{q_h} \} = \gamma_{q_h}$.

When we restrict our attention to the full $R = [n-1]$ case in Sections 11 and 12, we suppress most prefixes and subscripts of `$R$' (and of `$\lambda$') from maps, sets of $n$-tuples, and terminologies.  Clearly $UGC(n) = UG(n) = UF(n) = UFlr(n) = UCeil(n)$.  The  number of $nn$-tuples in each of these sets is $C_n$.

\section{Cores, shells, gapless tuples, canopies, floors, ceilings}\label{604}

In this section we use the critical list substructure to relate six kinds of $R$-tuples that can be used as indexes for the row bound tableau sets in Section \ref{623}.  Over Section \ref{300}, this section, and Section \ref{608} we are defining three versions of some of these notions, which have a word such as `floor' in common in their names.  When delineation of these three interrelated concepts is needed, one should consult the summary paragraph at the end of Section \ref{608}.

The process used to define the $R$-core map can also be used to bijectively produce the $R$-tuples in $UI_R(n)$ from the set of all $R$-critical lists:  To see surjectivity, note that the staircases within the carrels of a given $\alpha \in UI_R(n)$ can be formed toward the left from the critical pairs of $\alpha$.

\begin{fact}\label{fact604.2}Let $\upsilon \in U_R(n)$ and set $\Delta_R(\upsilon) =: \delta \in UI_R(n)$.  Here $\delta \leq \upsilon$ in $U_R(n)$ and $\delta$ has the same critical list as $\upsilon$.  Another upper $R$-tuple $\upsilon^\prime$ has the same critical list as $\upsilon$ if and only if $\Delta_R(\upsilon^\prime) = \Delta_R(\upsilon)$.  If $\upsilon \in UI_R(n)$, then $\Delta_R(\upsilon) = \upsilon$.  \end{fact}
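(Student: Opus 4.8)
The plan is to unwind the recursive definition of $\Delta_R$. That construction treats each carrel $(q_{h-1},q_h]$ separately, and within a carrel it uses only the critical indices $q_{h-1}<x_{f_h}<\cdots<x_1=q_h$ produced by the recursion together with the entries $\upsilon_{x_u}$ sitting at them: on each interval $(x_u,x_{u-1}]$ between consecutive critical indices one has $\delta_i=\upsilon_{x_{u-1}}-(x_{u-1}-i)$, and on $(q_{h-1},x_{f_h}]$ one has $\delta_i=\upsilon_{x_{f_h}}-(x_{f_h}-i)$. So on each such interval $\delta$ is a staircase descending by $1$ to the left from the critical entry at its right end; in particular $\delta_{x_u}=\upsilon_{x_u}$ at every critical index, and each carrel of $\delta$ is a concatenation of staircases. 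That $\delta\in UI_R(n)$ is immediate: $\upsilon_{x_{u-1}}\ge x_{u-1}$ forces $\delta_i\ge i$ on each staircase, and the defining inequality $\upsilon_{x_{u-1}}-\upsilon_{x_u}>x_{u-1}-x_u$ of a critical pair is exactly what makes the staircase ending at $\upsilon_{x_{u-1}}$ start strictly above where the staircase ending at $\upsilon_{x_u}$ stops (this is anyway built into the declared codomain of $\Delta_R$). As a by-product, $\Delta_R$ factors as ``read off the $R$-critical list'' followed by the bijection from $R$-critical lists onto $UI_R(n)$ noted at the start of this section.

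For $\delta\le\upsilon$ in $U_R(n)$ I would fix an index $i$ in a carrel $(q_{h-1},q_h]$ and take the critical index $x:=x_{u-1}$ with $x_u<i\le x$ (or $x:=x_{f_h}$ with $q_{h-1}<i\le x_{f_h}$). If $i=x$ then $\delta_i=\upsilon_i$. If $x_u<i<x$, then the \emph{maximality} of $x_u$ in the recursion forces $\upsilon_x-\upsilon_i\le x-i$, hence $\upsilon_i\ge\upsilon_x-(x-i)=\delta_i$; and if $q_{h-1}<i<x_{f_h}$, the termination of the recursion at $x_{f_h}$ gives $\upsilon_{x_{f_h}}-\upsilon_i\le x_{f_h}-i$, so again $\upsilon_i\ge\delta_i$. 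Ranging over all $i$ gives $\delta\le\upsilon$.

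The heart of the matter is to show that running the critical-index recursion on $\delta$ reproduces, carrel by carrel, the critical indices of $\upsilon$; since $\delta$ and $\upsilon$ agree at those indices, the critical entries then match too, so $\delta$ and $\upsilon$ have the same $R$-critical list. I would induct on $u$ within a fixed carrel. The index $q_h$ is forced as the first critical index for any $R$-tuple. Assuming the recursion on $\delta$ has reproduced $x_1,\dots,x_{u-1}$: the $\upsilon$-candidate $x_u$ satisfies $\delta_{x_{u-1}}-\delta_{x_u}=\upsilon_{x_{u-1}}-\upsilon_{x_u}>x_{u-1}-x_u$, so it is admissible for $\delta$; and any $j$ with $x_u<j<x_{u-1}$ lies on the staircase of $\delta$ descending from $\upsilon_{x_{u-1}}$, so $\delta_{x_{u-1}}-\delta_j=x_{u-1}-j$, which is not strictly greater, so $j$ is inadmissible. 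Hence $x_u$ is again the maximal admissible index and is selected; the same staircase identity on $(q_{h-1},x_{f_h}]$ shows the recursion on $\delta$ halts exactly at $x_{f_h}$. This bookkeeping — keeping track that it is precisely the maximality clause, together with the between-critical-indices staircase shape of $\delta$, that makes the two recursions agree step for step — is the one place I expect friction, though it is routine.

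The equivalence and the last assertion then follow formally. If $\Delta_R(\upsilon')=\delta=\Delta_R(\upsilon)$, then by the previous paragraph both $\upsilon$ and $\upsilon'$ share their $R$-critical list with $\delta$, hence with each other. Conversely, if $\upsilon'$ has the same $R$-critical list as $\upsilon$ — the same critical indices $x_u$ in each carrel and the same critical entries $\upsilon'_{x_u}=\upsilon_{x_u}$ — then the displayed formulas for $\delta$ depend only on that data, so $\Delta_R(\upsilon')=\Delta_R(\upsilon)$ entrywise (equivalently, $\Delta_R$ factors through the critical list as above). Finally, if $\upsilon\in UI_R(n)$, recall that each carrel of $\upsilon$ is then already a concatenation of staircases whose right endpoints are its critical indices $x_u$; so on each interval $(x_u,x_{u-1}]$ the entries of $\upsilon$ coincide with the staircase $\upsilon_{x_{u-1}}-(x_{u-1}-i)$ that defines $\delta_i$ there, and likewise on $(q_{h-1},x_{f_h}]$, whence $\delta=\upsilon$.
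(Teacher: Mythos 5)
Your proof is correct, and it proceeds exactly as intended: the paper states this as a Fact with no proof, treating it as immediate from the definition of $\Delta_R$, and your argument simply supplies the routine verification (staircase shape of $\delta$ between critical indices, maximality of each $x_u$ giving $\delta\le\upsilon$, the two recursions agreeing step for step, and the factorization of $\Delta_R$ through the $R$-critical list). No gaps; the inductive bookkeeping you flagged as potential friction is handled correctly.
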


Part (ii) of the following statement can be used to characterize the upper $R$-tuples with flag $R$-critical lists by referring to the original definition of gapless $R$-tuple.

\begin{prop}\label{prop604.4}Let $\upsilon \in U_R(n), \eta \in UGC_R(n),$ and $\phi \in UF_R(n)$.
\begin{enumerate}[(i)]\setlength\itemsep{-.5em}

\item The $R$-cores $\Delta_R(\eta)$ and $\Delta_R(\phi)$ of $\eta$ and $\phi$ are gapless $R$-tuples. \label{prop604.4.1}

\item One has $\Delta_R(\upsilon) \in UG_R(n)$ if and only if $\upsilon \in UGC_R(n)$. \label{prop604.4.2}

\item The two definitions of gapless $R$-tuple are equivalent. \label{prop604.4.3} \end{enumerate} \end{prop}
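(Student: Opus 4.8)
The plan is to treat parts (\ref{prop604.4.2}) and (\ref{prop604.4.1}) as quick consequences of Fact \ref{fact604.2} together with the definitions, and to reserve the real work for part (\ref{prop604.4.3}). For (\ref{prop604.4.2}), set $\delta := \Delta_R(\upsilon)$. By Fact \ref{fact604.2} we have $\delta \in UI_R(n)$, so $\delta$ is automatically $R$-increasing, and $\delta$ has the same $R$-critical list as $\upsilon$. By the definition of $UGC_R(n)$, an upper $R$-tuple lies in $UGC_R(n)$ exactly when its $R$-critical list is a flag $R$-critical list; since $\upsilon$ and $\delta$ share a critical list, $\delta \in UGC_R(n)$ if and only if $\upsilon \in UGC_R(n)$. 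Invoking the present definition that a gapless $R$-tuple is an $R$-increasing member of $UGC_R(n)$, we conclude $\delta \in UG_R(n)$ iff $\delta \in UGC_R(n)$ iff $\upsilon \in UGC_R(n)$, which is exactly (\ref{prop604.4.2}). Part (\ref{prop604.4.1}) then follows immediately: for $\eta \in UGC_R(n)$, applying (\ref{prop604.4.2}) with $\upsilon := \eta$ gives $\Delta_R(\eta) \in UG_R(n)$, and since $UF_R(n) \subseteq UGC_R(n)$ the same holds for $\Delta_R(\phi)$ when $\phi \in UF_R(n)$.

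The substance is part (\ref{prop604.4.3}). Both definitions of a gapless $R$-tuple require $\gamma$ to be an $R$-increasing upper tuple, so it suffices to show that, for such $\gamma$, the flag $R$-critical list condition is equivalent to the staircase condition: for all $h \in [r]$, if $\gamma_{q_h} > \gamma_{q_h+1}$ then the leftmost staircase of the $(h+1)^{st}$ carrel contains the value $\gamma_{q_h}$. First I would use the already-noted fact that for $\gamma \in UI_R(n)$ each carrel subsequence is a concatenation of staircases whose tops are exactly the critical entries. In particular, within carrel $h+1$ the critical entries increase from left to right in index, the leftmost being the top $\gamma_k$ of the leftmost staircase (with $k := x_{f_{h+1}}$), while $\gamma_{q_h}$ is the top of the rightmost staircase of carrel $h$. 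The flag condition $y_{q_h} \le y_k$ therefore reduces, for each $h \in [r]$, to the single inequality $\gamma_{q_h} \le \gamma_k$.

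It remains to match this inequality with the staircase condition by cases. If $\gamma_{q_h} \le \gamma_{q_h+1}$, then the leftmost staircase of carrel $h+1$ runs upward from $\gamma_{q_h+1}$ to $\gamma_k$ in unit steps, so $\gamma_k \ge \gamma_{q_h+1} \ge \gamma_{q_h}$; the flag inequality holds automatically and the staircase hypothesis is vacuous, so both conditions are satisfied. If instead $\gamma_{q_h} > \gamma_{q_h+1}$, then the leftmost staircase is the block of consecutive integers $\gamma_{q_h+1}, \gamma_{q_h+1}+1, \ldots, \gamma_k$, which contains the value $\gamma_{q_h}$ precisely when $\gamma_{q_h+1} \le \gamma_{q_h} \le \gamma_k$; the left inequality is forced by $\gamma_{q_h} > \gamma_{q_h+1}$, so ``contains $\gamma_{q_h}$'' is equivalent to $\gamma_{q_h} \le \gamma_k$, which is again the flag inequality. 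Hence the two conditions agree at every $h \in [r]$, giving the desired equivalence.

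I expect the main obstacle to be the bookkeeping in part (\ref{prop604.4.3}): one must be confident that for $R$-increasing tuples the critical entries really are exactly the staircase tops, and that the leftmost critical entry of carrel $h+1$ equals the top of its leftmost staircase, so that the flag condition genuinely collapses to the clean per-$h$ inequality $\gamma_{q_h} \le \gamma_k$. Once that identification is pinned down, the case analysis is routine; the only delicate point is confirming that the unit-step structure of the leftmost staircase makes ``contains the value $\gamma_{q_h}$'' literally the interval membership $\gamma_{q_h+1} \le \gamma_{q_h} \le \gamma_k$.
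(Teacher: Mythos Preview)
Your proposal is correct and takes essentially the same approach as the paper. The only cosmetic differences are that you derive (\ref{prop604.4.1}) from (\ref{prop604.4.2}) rather than the reverse, and your case analysis in (\ref{prop604.4.3}) spells out the $\gamma_{q_h}\le\gamma_{q_h+1}$ case explicitly whereas the paper handles it with the single clause ``whether $\gamma_{q_h} > \gamma_{q_h+1}$ or not''; the underlying argument---reducing the flag condition to the per-$h$ inequality $\gamma_{q_h}\le\gamma_k$ and matching it against membership in the leftmost staircase---is identical.
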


\begin{proof}For (i), note that these images are in $UI_R(n)$ and have the same $R$-critical lists as did $\eta$ and $\phi$.  One direction of (ii) restates part of (i), and the other direction follows from $\Delta_R$ preserving the $R$-critical list.  Let $\gamma \in UG_R(n) \subseteq UI_R(n)$.  For $h \in [r]$, the entry $\gamma_{q_h}$ does not exceed the leftmost critical entry on the $(h+1)^{st}$ carrel.  Since that entry is the maximum entry in the leftmost staircase for $\gamma$ on this carrel, if $\gamma_{q_h} > \gamma_{q_h+1}$ then an entry in that staircase must be $\gamma_{q_h}$.  Conversely, suppose that $\gamma \in UI_R(n)$ satisfies the original definition to be $R$-gapless.  The visualization above can be reversed for all $h \in [r]$ to see that $\gamma_{q_h}$ will not exceed the leftmost critical entry on the $(h+1)^{st}$ carrel, whether $\gamma_{q_h} > \gamma_{q_h+1}$ or not.  \end{proof}

Most of our kinds of $R$-tuples correspond bijectively to $R$-critical lists or to flag $R$-critical lists.  The following six $R$-tuples $\alpha, \rho, \gamma, \kappa, \tau,$ and $\xi$ will be considered in the proposition below.  Let $\mathcal{C}$ be an $R$-critical list.  For each critical pair $(x, y_x)$ in $\mathcal{C}$, if $x$ is the leftmost critical index set $x^\prime := 0$;  otherwise let $x^\prime$ be the largest critical index that is less than $x$.  Set $\xi_x := \tau_x := \kappa_x := \gamma_x := \rho_x := \alpha_x := y_x$.  Then for $x^\prime < i < x$:  Set $\alpha_i := \alpha_x - (x-i)$.  Set $\rho_i := n$.  Now suppose that $\mathcal{C}$ is a flag $R$-critical list.  Set $\gamma_i := \gamma_x - (x-i)$.  Set $\kappa_i := n$.  If $x$ is the leftmost critical index in the $(h+1)^{st}$ carrel for some $h \in [r]$, then $x^\prime = q_h$ and we set $\tau_i := \max \{ \tau_{q_h}, \tau_x - (x-i) \}$ for $i \in (q_h, x)$.  Otherwise set $\tau_i := \tau_x - (x-i)$ for $i \in (x^\prime, x)$.  Set $\xi_i := \xi_x$.

\begin{prop}\label{prop604.6}Let $\mathcal{C}$ be an $R$-critical list.

\begin{enumerate}[(i)]\setlength\itemsep{-.5em}

\item The $R$-tuples $\alpha$ and $\rho$ above are respectively the unique $R$-increasing upper tuple and the unique $R$-shell tuple whose $R$-critical lists are $\mathcal{C}$. \label{prop604.6.1}

\item If $\mathcal{C}$ is a flag $R$-critical list, the $R$-tuples $\gamma, \kappa, \tau,$ and $\xi$ above are respectively the unique gapless $R$-tuple, the unique $R$-canopy tuple, the unique $R$-floor flag, and the unique $R$-ceiling flag whose $R$-critical lists are $\mathcal{C}$. \label{prop604.6.2} \end{enumerate} \end{prop}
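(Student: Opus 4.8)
The plan is to apply the same three-step template to each of the six $R$-tuples $\alpha,\rho,\gamma,\kappa,\tau,\xi$ built from $\mathcal{C}$: first check that the constructed tuple lies in the asserted class, second run the $R$-core recursion on it and read off $\mathcal{C}$, third deduce uniqueness by showing that within the relevant class a tuple is determined by its $R$-critical list. (Well-definedness is clear: each index of $[n]$ is either a critical index of $\mathcal{C}$ or lies strictly inside a unique block $(x',x)$ cut out by consecutive critical indices.) Two reductions cut the work. When $\mathcal{C}$ is a flag $R$-critical list the recipe produces $\gamma=\alpha$ and $\kappa=\rho$ verbatim, so the gapless and canopy assertions of (ii) follow from (i): being flag, $\mathcal{C}$ makes $\alpha,\rho\in UGC_R(n)$ by definition; then $\alpha\in UGC_R(n)\cap UI_R(n)=UG_R(n)$ is gapless, and $\rho$, being an $R$-shell tuple in $UGC_R(n)$, is an $R$-canopy tuple, with uniqueness inherited since $UG_R(n)\subseteq UI_R(n)$ and the $R$-canopy tuples lie among the $R$-shell tuples. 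And the $\alpha$ part of (i) is precisely the bijection between $R$-critical lists and $UI_R(n)$ recorded just before Fact \ref{fact604.2}: $\alpha$ is $R$-increasing because each carrel is the concatenation of the staircases grown leftward from its critical pairs, so its $R$-critical list is $\mathcal{C}$; uniqueness is the injectivity of the critical-list map on $UI_R(n)$ from Fact \ref{fact604.2}.

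For $\rho$: the construction puts the value $n$ at every non-critical index, so once $\mathcal{C}$ is known to be its $R$-critical list, $\rho$ is an $R$-shell tuple. Running the recursion in carrel $h$ from $q_h=x_1$ leftward, at a non-critical index $i$ one has $\rho_i=n$, hence $\rho_{x_{u-1}}-\rho_i=y_{x_{u-1}}-n\le 0<x_{u-1}-i$ and $i$ is never chosen, while at each critical index $x_u$ the inequality $y_{x_{u-1}}-y_{x_u}>x_{u-1}-x_u$ holds, so $x_u$ is chosen with entry $y_{x_u}$; thus the recursion recovers $\mathcal{C}$. Uniqueness is immediate: an $R$-shell tuple with critical list $\mathcal{C}$ has value $n$ off the thereby-determined critical indices and value $y_x$ at each critical index $x$. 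The $R$-ceiling flag $\xi$ is analogous: $\xi$ is constant equal to $y_x$ on each block $(x',x]$ carved out by consecutive critical indices within a carrel, so carrel by carrel it is a concatenation of plateaus whose rightmost pairs are the critical pairs, and since $y_{q_h}\le y_{x_{f_{h+1}}}$ (the flag condition) it is weakly increasing across each divider; hence $\xi\in UF_R(n)$ is an $R$-ceiling flag. Its critical list is $\mathcal{C}$ since $\xi_{x_{u-1}}-\xi_i=0\le x_{u-1}-i$ for $i$ interior to a block while the critical-pair inequality holds at each critical index; and any $R$-ceiling flag with critical list $\mathcal{C}$ is forced, carrel by carrel, to be constant equal to $y_x$ on $(x',x]$, hence equals $\xi$.

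The only substantial case is the $R$-floor flag $\tau$, which need not be $R$-increasing, so Fact \ref{fact604.2} does not apply and the recursion must be tracked through the $\max$-clamping. I would argue in four steps. (a) $\tau\in UF_R(n)$: inside a carrel the critical-pair inequality forces $\tau$ to increase strictly from one critical index to the next (with a jump of at least two just left of each critical index), and across a divider $\tau_{q_h}=y_{q_h}\le y_{x_{f_{h+1}}}$ while $\tau_{q_h+1}$ equals $y_{x_{f_{h+1}}}$ or $\max\{y_{q_h},\,\cdot\,\}$, hence is $\ge y_{q_h}$; the bound $\tau_i\ge i$ follows from $y_x\ge x$. (b) $\tau$ is an $R$-floor flag: its only non-trivial plateaus are the initial runs $\{q_h,q_h+1,\dots\}$ of carrels on which the $\max$ equals the constant $y_{q_h}$ before the leftward staircase from $y_{x_{f_{h+1}}}$ overtakes it, each of which has leftmost pair $(q_h,\tau_{q_h})$; no other non-trivial plateau occurs because $\tau$ strictly increases between consecutive critical indices of a carrel. (c) the critical list of $\tau$ is $\mathcal{C}$: the recursion in carrel $h$ selects exactly $x_1,\dots,x_{f_h}$, since at an interior index $i$ with $x_u<i<x_{u-1}$ one has $\tau_{x_{u-1}}-\tau_i=x_{u-1}-i$ exactly, at $x_u$ the inequality is strict, and left of $x_{f_h}$ the clamp gives $\tau_{x_{f_h}}-\tau_i\le x_{f_h}-i$; the critical entries read off are $y_{x_u}$. (d) uniqueness: if $\tau'$ is an $R$-floor flag with critical list $\mathcal{C}$, then $\tau'_x=y_x$ at every critical index; between consecutive critical indices of a carrel the prohibition on non-trivial plateaus off interior dividers forces strict increase, which together with the constraint $\tau'_i\ge y_x-(x-i)$ needed so the recursion does not pick up $i$ forces $\tau'_i=y_x-(x-i)$; and on the initial run of a carrel after a divider $q_h$ those two constraints force $\tau'$ to hold the value $y_{q_h}$ until the leftward staircase from $y_{x_{f_{h+1}}}$ exceeds it and to coincide with that staircase thereafter, which is exactly the $\max$ formula (the non-clamped branch being forced outright on the first carrel). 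Hence $\tau'=\tau$.

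I expect step (d), and to a lesser extent (b), to be the real obstacle. The difficulty is that three kinds of information must be combined at once: the global weakly-increasing (flag) property, the purely local rule that non-trivial plateaus may only begin at the interior dividers $q_1,\dots,q_r$, and the semi-global requirement that the $R$-core recursion land on exactly the indices of $\mathcal{C}$; one must show these jointly leave no room for any alternative interpolation of $\tau'$ between its forced critical values. Several degenerate configurations will need separate attention — a carrel of size one, a carrel whose only critical index is its right endpoint, and a divider $q_h$ adjacent to a critical index on either side — in each of which one checks that the $\max$ clause or the non-clamped clause of the construction still yields the unique floor flag. Everything else is routine bookkeeping with the two defining inequalities $x_u\le y_{x_u}$ and $y_{x_{u-1}}-y_{x_u}>x_{u-1}-x_u$.
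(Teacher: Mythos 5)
Your overall architecture is the same as the paper's: establish membership and the critical-list computation for each of the six tuples, reduce $\gamma$ and $\kappa$ to $\alpha$ and $\rho$ via the flag hypothesis, get uniqueness of $\alpha$ (hence $\gamma$) from the bijection between $R$-critical lists and $UI_R(n)$ recorded before Fact \ref{fact604.2}, get uniqueness of $\rho,\kappa,\xi$ from the observation that their non-critical entries are determined, and isolate the $R$-floor flag as the one substantial case, handled by forcing $\tau'_i = \max\{\tau'_{q_h}, \tau'_x-(x-i)\}$ on the clamped run and the pure staircase elsewhere. Steps (c) and (d) match the paper's argument once fleshed out (the paper is just as terse about combining the plateau prohibition with the ``recursion must not pick up $i$'' lower bound).

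The one place where your sketch asserts something false is step (b): it is not true that every non-trivial plateau of $\tau$ has leftmost pair $(q_h,\tau_{q_h})$ for the divider $q_h$ immediately to the left of the clamped carrel, because the plateau can spill leftward across that divider. Take $n=6$, $R=\{2,4\}$, and the flag $R$-critical list with pairs $(2,5)$, $(4,5)$, $(6,6)$; the construction gives $\tau=(4,5;5,5;5,6)$, whose unique non-trivial plateau occupies indices $2$ through $5$, so the plateau created by the clamp at the start of the third carrel has leftmost pair $(q_1,\tau_{q_1})$, not $(q_2,\tau_{q_2})$. The conclusion that $\tau$ is an $R$-floor flag still holds, but proving it requires exactly the extra argument the paper supplies: $\tau_{q_h-1}=\tau_{q_h}$ can only happen when the entire $h^{th}$ carrel is constant (all its critical entries equal the clamp value $y_{q_{h-1}}$), so one traces the plateau leftward through such constant carrels until it terminates at the right end $q_{h'}$ of some earlier carrel, termination being guaranteed because the first carrel is strictly increasing. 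Your list of ``degenerate configurations needing attention'' gestures in this direction but does not identify this case, and it also reappears implicitly in (d); with that tracing argument added, your proof is complete and coincides with the paper's.
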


\begin{proof}It is clear that the $R$-critical list of each of these six tuples is the given $R$-critical list.  We confirm that the six definitions are satisfied:  Since the $R$-tuples $\alpha$ and $\gamma$ are produced as in the definition of the $R$-core map $\Delta_R$, we see that $\alpha, \gamma \in UI_R(n)$.  Since the $R$-critical list given for $\gamma$ is a flag $R$-critical list, we have $\gamma \in UG_R(n)$.  Clearly $\rho$ is an $R$-shell tuple.  Since $\kappa = \rho$, the flag $R$-critical list hypothesis implies that $\kappa$ is an $R$-canopy tuple.  If $\tau$ has a non-trivial plateau it must occur when $\tau_i$ is set to $\tau_{q_h}$ for some $h \in [r]$ and some consecutive indices $i$ at the beginning of the $(h+1)^{st}$ carrel.  If this $\tau_{q_h}$ is greater than $\tau_{q_h-1}$ then the definition of $R$-floor flag is satisfied.  Otherwise $\tau_{q_h} = \tau_{q_h-1}$, which implies that all entries in the $h^{th}$ carrel have the value $\tau_{q_{h-1}}$.  This plateau will necessarily terminate at the rightmost entry in some earlier carrel, since the entries in the first carrel are strictly increasing.  Clearly $\xi$ is an $R$-ceiling flag.

For the uniqueness of $\alpha$:  It was noted earlier that this construction is bijective from $R$-critical lists to $UI_R(n)$; its injective inverse is the formation of the $R$-critical list.  Restrict this bijection to the flag $R$-critical lists to get uniqueness for $\gamma$.  It is clear from the definitions of $R$-shell tuple, $R$-canopy tuple, and $R$-ceiling flag that for each of these notions any two $R$-tuples with the same $R$-critical list must also have the same non-critical entries.  Let $\tau^\prime$ be any $R$-floor flag with flag $R$-critical list $\mathcal{C}$.  Let $h \in [r]$.  Let $x$ be the leftmost critical index in $(q_h, q_{h+1}]$.  Then for $i \in (q_h, x)$ it can be seen that the critical entries at $q_h$ and $x$ force $\tau_i^\prime = \max \{ \tau_{q_h}^\prime, \tau_x^\prime - (x-i) \}$.  On $(x, q_{h+1}]$ and $(0, q_1]$ the flag $\tau^\prime$ must be increasing.  So on $(x, q_{h+1})$ and $(0, q_1)$ the entries of $\tau^\prime$ are uniquely determined by the critical pairs via staircase decomposition for $\alpha$.  \end{proof}

\noindent Here we say that $\alpha$ and $\rho$ are respectively the \emph{$R$-increasing upper tuple} and the \emph{$R$-shell tuple} \emph{for the $R$-critical list $\mathcal{C}$}.  We also say that $\gamma, \kappa, \tau,$ and $\xi$ are respectively the \emph{gapless $R$-tuple}, the \emph{$R$-canopy tuple}, the \emph{$R$-floor flag}, and the \emph{$R$-ceiling flag} \emph{for the flag $R$-critical list $\mathcal{C}$}.

\begin{cor}\label{cor604.8}The six constructions above specify bijections from the set of $R$-critical lists (flag $R$-critical lists) to the sets of $R$-increasing upper tuples and $R$-shell tuples (gapless $R$-tuples, $R$-canopy tuples, $R$-floor flags, and $R$-ceiling flags).  \end{cor}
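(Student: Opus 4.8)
The plan is to deduce this corollary directly from Proposition \ref{prop604.6}, which already carries all of the substantive content: the work remaining is just to repackage its existence and uniqueness clauses as surjectivity and injectivity of the six maps, and to verify that ``form the $R$-critical list'' supplies the required inverse on each side. So first I would record that each of the six constructions is well-defined as a map into the claimed target set. For $\alpha$ and $\rho$ this is the existence half of Proposition \ref{prop604.6}(\ref{prop604.6.1}): $\alpha \in UI_R(n)$ and $\rho$ is an $R$-shell tuple. For $\gamma, \kappa, \tau, \xi$ it is the existence half of Proposition \ref{prop604.6}(\ref{prop604.6.2}), which applies precisely because the input is assumed to be a flag $R$-critical list. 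Then, in each of the six cases, the proof of Proposition \ref{prop604.6} notes that the $R$-critical list of the constructed $R$-tuple is the given list $\mathcal{C}$; hence ``form the $R$-critical list'' is a left inverse to each construction, so each of the six maps is injective.

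For surjectivity I would take any object $\alpha'$ of one of the six types and consider its $R$-critical list $\mathcal{C}'$. By the general remark following the definition of $R$-critical list, $\mathcal{C}'$ is an $R$-critical list whenever $\alpha' \in U_R(n)$, which covers the $R$-increasing upper tuple and $R$-shell tuple cases. When $\alpha'$ is a gapless $R$-tuple, an $R$-canopy tuple, an $R$-floor flag, or an $R$-ceiling flag, I would further observe that $\mathcal{C}'$ is in fact a flag $R$-critical list: gapless $R$-tuples and $R$-canopy tuples lie in $UGC_R(n)$ by parts (iii) and (iv) of the Definition, while $R$-floor flags and $R$-ceiling flags lie in $UF_R(n) \subseteq UGC_R(n)$; and a member of $UGC_R(n)$ is by definition exactly an upper $R$-tuple whose $R$-critical list is a flag $R$-critical list. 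Applying the relevant construction to $\mathcal{C}'$ then yields an $R$-tuple that is of the same type as $\alpha'$ and has $R$-critical list $\mathcal{C}'$, so by the uniqueness half of Proposition \ref{prop604.6} it equals $\alpha'$. Hence $\alpha'$ is in the image, and the six maps are bijections.

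I do not expect a genuine obstacle, since the corollary is essentially a restatement of Proposition \ref{prop604.6}. The one point that warrants a moment's care is the surjectivity step for the two flag families $UFlr_R(n)$ and $UCeil_R(n)$: there one must invoke the inclusions $UFlr_R(n) \subseteq UF_R(n)$ and $UCeil_R(n) \subseteq UF_R(n)$ and then $UF_R(n) \subseteq UGC_R(n)$ to know that the $R$-critical list of such a tuple really is a flag $R$-critical list, so that the constructions of $\tau$ and $\xi$ from Proposition \ref{prop604.6}(\ref{prop604.6.2}) are applicable to them.
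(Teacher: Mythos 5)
Your proposal is correct and follows essentially the same route as the paper: injectivity because each construction preserves the (flag) $R$-critical list, and surjectivity by forming the $R$-critical list of the target tuple, applying the construction, and invoking the uniqueness clauses of Proposition \ref{prop604.6}. Your extra check that the critical list of a gapless $R$-tuple, $R$-canopy tuple, $R$-floor flag, or $R$-ceiling flag is a flag $R$-critical list (via membership in $UGC_R(n)$, using $UF_R(n) \subseteq UGC_R(n)$) is a detail the paper leaves implicit, but it is the same argument.
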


\begin{proof}These maps are injective since they preserve the (flag) $R$-critical lists.  To show surjectivity, first find the $R$-critical list of the target $R$-tuple. \end{proof}

\section{Equivalence classes in $\mathbf{\emph{U}}_\mathbf{\emph{R}}\mathbf{\emph{(n)}}$,
$\mathbf{\emph{UGC}}_\mathbf{\emph{R}}\mathbf{\emph{(n)}}$,
$\mathbf{\emph{UF}}_\mathbf{\emph{R}}\mathbf{\emph{(n)}}$}\label{608}

Here we present results needed to study the sets of tableaux of shape $\lambda$ with given row bounds in Section \ref{623}.  There we reduce that study to the study of the following sets of $R$-increasing tuples, after we determine $R := R_\lambda \subseteq [n-1]$ from $\lambda$:  For $\beta \in U_R(n)$, set $\{ \beta \}_R := \{ \epsilon \in UI_R(n):  \epsilon \leq \beta \}$.  This is not \'{a} priori a principal ideal in $UI_R(n)$, since it is possible that $\beta \notin UI_R(n)$.  But we will see that for any $\beta$ there exists $\alpha \in UI_R(n)$ such that $\{ \beta \}_R$ is the principal ideal $[\alpha]$ in $UI_R(n)$.

Define an equivalence relation $\sim_R$ on $U_R(n)$ as follows:  Let $\upsilon, \upsilon^\prime \in U_R(n)$.  We define $\upsilon \sim_R \upsilon^\prime$ if $\{ \upsilon \}_R = \{ \upsilon^\prime \}_R$.  Sometimes we restrict $\sim_R$ from $U_R(n)$ to $UGC_R(n)$, or further to $UF_R(n)$.  We denote the equivalence classes of $\sim_R$ in these three sets respectively by $\langle \upsilon \rangle_{\sim_R}$, $\langle \eta \rangle_{\sim_R}^G$, and $\langle \phi \rangle_{\sim_R}^F$.  We indicate intervals in $UGC_R(n)$ and $UF_R(n)$ respectively with $[ \cdot , \cdot ]^G$ and $[ \cdot, \cdot ]^F$.

The proofs for the results stated in this section appear in \cite{PW2}.

\begin{lem}\label{lemma608.2}Let $\upsilon \in U_R(n), \eta \in UGC_R(n)$, and $\phi \in UF_R(n)$.
\begin{enumerate}[(i)]\setlength\itemsep{-.5em}

\item Here $\{ \upsilon \}_R = [\Delta_R(\upsilon)] \subseteq UI_R(n)$.  So $\upsilon' \sim_R \upsilon$ for some $\upsilon' \in U_R(n)$ if and only if $\Delta_R(\upsilon') = \Delta_R(\upsilon)$ if and only if $\upsilon'$ has the same $R$-critical list as $\upsilon$. \label{lemma608.2.1}

\item The equivalence classes $\langle \upsilon \rangle_{\sim_R}$, $\langle \eta \rangle_{\sim_R}^G$, and $\langle \phi \rangle_{\sim_R}^F$ are closed respectively in $U_R(n), UGC_R(n)$, and $UF_R(n)$ under the meet and the join operations for $U_R(n)$. \label{lemma608.2.2} \end{enumerate} \end{lem}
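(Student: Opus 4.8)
The plan is to prove part (\ref{lemma608.2.1}) first and then derive part (\ref{lemma608.2.2}) from it together with the lattice structure already recorded for $U_R(n)$. For (\ref{lemma608.2.1}), fix $\upsilon \in U_R(n)$ and set $\delta := \Delta_R(\upsilon)$. By Fact \ref{fact604.2} we know $\delta \in UI_R(n)$ and $\delta \le \upsilon$, so any $\epsilon \in UI_R(n)$ with $\epsilon \le \delta$ also satisfies $\epsilon \le \upsilon$; this gives $[\Delta_R(\upsilon)] \subseteq \{\upsilon\}_R$. For the reverse inclusion I would take $\epsilon \in UI_R(n)$ with $\epsilon \le \upsilon$ and show $\epsilon \le \delta$ carrel by carrel. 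Within the $h^{th}$ carrel, recall that the entries of $\delta$ are obtained by running staircases to the left off the critical pairs $(x_u, \upsilon_{x_u})$; the key inequality to verify is that for $x_u < i \le x_{u-1}$ one has $\epsilon_i \le \upsilon_x - (x - i)$ where $x = x_{u-1}$. Since $\epsilon$ is $R$-increasing, $\epsilon_i \le \epsilon_x - (x - i)$ for indices $i$ below $x$ in the same carrel (strictly, $\epsilon$ increases by at least $1$ at each step, so $\epsilon$ decreases by at least $x-i$ as we move left by $x-i$ from $x$), and $\epsilon_x \le \upsilon_x$ by hypothesis; combining gives exactly $\epsilon_i \le \upsilon_x - (x-i) = \delta_i$. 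Doing this for every $u$ and every $h$ yields $\epsilon \le \delta$, hence $\{\upsilon\}_R \subseteq [\Delta_R(\upsilon)]$, so $\{\upsilon\}_R = [\Delta_R(\upsilon)]$. The two ``if and only if'' statements then follow immediately: $\{\upsilon'\}_R = \{\upsilon\}_R$ iff $[\Delta_R(\upsilon')] = [\Delta_R(\upsilon)]$ iff $\Delta_R(\upsilon') = \Delta_R(\upsilon)$ (principal ideals in a poset coincide iff their tops do), and the last equivalence is the ``same critical list'' characterization of Fact \ref{fact604.2}.

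For (\ref{lemma608.2.2}), I would argue in $U_R(n)$ first and then restrict. Recall $P(n)$, and hence $U_R(n)$, is a distributive lattice with entrywise min and join. Given $\upsilon, \upsilon' \in U_R(n)$ with $\upsilon \sim_R \upsilon'$, I want $\upsilon \wedge \upsilon' \sim_R \upsilon$ and $\upsilon \vee \upsilon' \sim_R \upsilon$. By part (\ref{lemma608.2.1}) it suffices to show $\Delta_R(\upsilon \wedge \upsilon') = \Delta_R(\upsilon) = \Delta_R(\upsilon \vee \upsilon')$. Since $\delta := \Delta_R(\upsilon) = \Delta_R(\upsilon')$ lies in $UI_R(n)$ and satisfies $\delta \le \upsilon$ and $\delta \le \upsilon'$, we get $\delta \le \upsilon \wedge \upsilon' \le \upsilon \vee \upsilon'$; since $\delta \in UI_R(n)$ this forces $\delta \in \{\upsilon \wedge \upsilon'\}_R$ and $\delta \in \{\upsilon \vee \upsilon'\}_R$, and because $\{\mu\}_R = [\Delta_R(\mu)]$ is a principal ideal with top $\Delta_R(\mu)$, membership of $\delta$ together with $\delta \le \mu$ and $\delta$ being $R$-increasing gives $\delta \le \Delta_R(\mu)$ for $\mu \in \{\upsilon\wedge\upsilon', \upsilon\vee\upsilon'\}$. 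For the reverse, $\upsilon \wedge \upsilon' \le \upsilon$ gives $\{\upsilon\wedge\upsilon'\}_R \subseteq \{\upsilon\}_R = [\delta]$, so $\Delta_R(\upsilon\wedge\upsilon') \le \delta$; and $\upsilon \le \upsilon \vee \upsilon'$ must be handled with slightly more care since the inclusion of ideals now runs the other way — here I would instead note $\Delta_R(\upsilon\vee\upsilon') \in \{\upsilon\vee\upsilon'\}_R$ and use that $\delta$ is a common lower bound forcing equality once we show $\Delta_R(\upsilon\vee\upsilon') \le \delta$. That last inequality is the crux: it says that joining two $R$-tuples with the same critical list cannot enlarge the critical list, i.e. cannot create a new critical pair with a strictly larger critical entry than what $\delta$ already records.

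I expect that last point — stability of the critical list (equivalently of the $R$-core) under the join — to be the main obstacle. The cleanest route is to argue directly on critical pairs: in carrel $h$, a critical pair of $\upsilon \vee \upsilon'$ is witnessed by indices $x_u$ with $(\upsilon\vee\upsilon')_{x_{u-1}} - (\upsilon\vee\upsilon')_{x_u} > x_{u-1} - x_u$; I would show that the critical entries of $\upsilon \vee \upsilon'$ are bounded above by the corresponding critical entries of $\delta$ by exploiting that $\upsilon, \upsilon' \le$ their common $R$-increasing core-completion (the $R$-shell tuple $\rho$ for the shared critical list, from Proposition \ref{prop604.6}(\ref{prop604.6.1}), dominates nothing useful directly, so instead I would use: $\Delta_R$ is order-preserving on the subposet of $U_R(n)$ with a fixed critical list in the sense that all such tuples have the same image), then invoke that $\Delta_R(\upsilon) = \Delta_R(\upsilon')$ already captures the maximal critical data. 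Once (\ref{lemma608.2.2}) holds in $U_R(n)$, the restrictions to $UGC_R(n)$ and $UF_R(n)$ are routine: by Proposition \ref{prop604.4}(\ref{prop604.4.2}), $UGC_R(n)$ is exactly the set of $\upsilon$ with $\Delta_R(\upsilon) \in UG_R(n)$, a condition depending only on the critical list and hence preserved when we replace $\upsilon$ by an equivalent $\upsilon\wedge\upsilon'$ or $\upsilon\vee\upsilon'$; and the analogous statement for upper flags follows because the flag condition is preserved by entrywise min and max (the set of flags is a sublattice of $P(n)$) and the equivalence class stays inside $UF_R(n)$ by construction. The upshot is that each class $\langle\upsilon\rangle_{\sim_R}$, $\langle\eta\rangle_{\sim_R}^G$, $\langle\phi\rangle_{\sim_R}^F$ is a sublattice of the ambient lattice, which is what the statement asserts.
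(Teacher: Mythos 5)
Your part (i) is correct and complete: the inclusion $[\Delta_R(\upsilon)] \subseteq \{\upsilon\}_R$ follows from Fact \ref{fact604.2}, and your carrel-by-carrel estimate $\epsilon_i \leq \epsilon_x - (x-i) \leq \upsilon_x - (x-i) = \delta_i$ (valid because $\epsilon$ is strictly increasing on each carrel and $\epsilon \leq \upsilon$) covers every index, including the stretch left of the last critical index $x_{f_h}$, so $\{\upsilon\}_R \subseteq [\Delta_R(\upsilon)]$; the two ``if and only if'' statements then follow as you say. The meet half of part (ii) is also complete, and your reductions of the $UGC_R(n)$ and $UF_R(n)$ statements to the $U_R(n)$ statement (via Proposition \ref{prop604.4}(\ref{prop604.4.2}), which makes membership in $UGC_R(n)$ depend only on the $R$-core, and via the fact that upper flags form a sublattice of $P(n)$) are fine. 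Note that the paper itself defers the proof of this lemma to \cite{PW2}, so there is no in-paper argument to compare against; your proposal stands or falls on its own.

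It falls, for now, at the point you yourself flag: you never prove $\Delta_R(\upsilon \vee \upsilon') \leq \delta$. The route you sketch --- ``$\Delta_R$ is order-preserving on the subposet of $U_R(n)$ with a fixed critical list in the sense that all such tuples have the same image'' --- is circular, since whether $\upsilon \vee \upsilon'$ has that fixed critical list is exactly what is at stake; and the ideal-containment trick that settles the meet does not transfer, because $\epsilon \leq \upsilon \vee \upsilon'$ does not force $\epsilon \leq \upsilon$ or $\epsilon \leq \upsilon'$. The gap can be closed by a direct computation with the shared critical list $\mathcal{C}$. Set $\mu := \upsilon \vee \upsilon'$. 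At every critical index $x$ of $\mathcal{C}$ both tuples have entry $y_x$, so $\mu_x = y_x$; at every index $i$ both tuples dominate $\delta$ (Fact \ref{fact604.2}), so $\mu_i \geq \delta_i$. Now rerun the right-to-left recursion defining the critical pairs of $\mu$ within a carrel: the rightmost pair is $(q_h, y_{q_h})$; for $x_u < i < x_{u-1}$ the bound $\mu_i \geq \delta_i = y_{x_{u-1}} - (x_{u-1}-i)$ shows $y_{x_{u-1}} - \mu_i \leq x_{u-1} - i$, so no new critical index appears strictly between consecutive critical indices of $\mathcal{C}$ (and likewise none appears to the left of $x_{f_h}$), while $\mu_{x_u} = y_{x_u}$ still satisfies $y_{x_{u-1}} - y_{x_u} > x_{u-1} - x_u$, so $x_u$ is again selected with the same critical entry. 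Hence $\mu$ has critical list $\mathcal{C}$, and Fact \ref{fact604.2} gives $\Delta_R(\mu) = \delta$, completing the join case. (The same computation applies verbatim to the meet, and it in effect shows that each class is the full interval between $\delta$ and the $R$-shell tuple for $\mathcal{C}$, i.e.\ the description recorded in Proposition \ref{prop608.4}(\ref{prop608.4.1}), from which closure under meet and join is immediate.)
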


\noindent So by Part (i) we can view these three equivalence classes as consisting of $R$-tuples that share (flag) $R$-critical lists.  And by Part (ii), each of these equivalence classes has a unique minimal and a unique maximal element under the entrywise partial orders.  We denote the minimums of $\langle \upsilon \rangle_{\sim_R} \subseteq U_R(n)$ and of $\langle \eta \rangle_{\sim_R}^G \subseteq UGC_R(n)$ by $\utilde{\upsilon}$ and \d{$\eta$} respectively.  We call the maximums of $\langle \upsilon \rangle_{\sim_R} \subseteq U_R(n)$ and of $\langle \eta \rangle_{\sim_R}^G \subseteq UGC_R(n)$ the \emph{$R$-shell of $\upsilon$} and the \emph{$R$-canopy of $\eta$} and denote them by $\tilde{\upsilon}$ and $\dot{\eta}$ respectively.  For the class $\langle \phi \rangle_{\sim_R}^F \subseteq UF_R(n)$, we call and denote these respectively the \emph{$R$-floor \b{$\phi$} of $\phi$} and the \emph{$R$-ceiling $\bar{\phi}$ of $\phi$}.

These definitions give the containments $\langle \upsilon \rangle_{\sim_R} \subseteq [$$\utilde{\upsilon}$,$\tilde{\upsilon}], \langle \eta \rangle_{\sim_R}^G \subseteq [$\d{$\eta$}$,\dot{\eta}]^G$, and $\langle \phi \rangle_{\sim_R}^F \subseteq [$\b{$\phi$},$\bar{\phi}]^F$ for our next result:

\begin{prop}\label{prop608.4}Let $\upsilon \in U_R(n), \eta \in UGC_R(n)$, and $\phi \in UF_R(n)$.
\begin{enumerate}[(i)]\setlength\itemsep{-.5em}

\item Here $\utilde{\upsilon}$ $ = \Delta_R(\upsilon)$, the $R$-core of $\upsilon$.  In $U_R(n)$ we have $\langle \upsilon \rangle_{\sim_R} = [$$\utilde{\upsilon}$, $\tilde{\upsilon}]$.  The $R$-core $\utilde{\upsilon}$ of $\upsilon$ (respectively $R$-shell $\tilde{\upsilon}$ of $\upsilon$) is the $R$-increasing upper tuple (respectively $R$-shell tuple) for the $R$-critical list of $\upsilon$. \label{prop608.4.1}

\item We have $[$$\utilde{\upsilon}$, $\tilde{\upsilon}] \subseteq UGC_R(n)$ or $[$$\utilde{\upsilon}$, $\tilde{\upsilon}] \subseteq U_R(n) \backslash UGC_R(n)$, depending on whether $\upsilon \in UGC_R(n)$ or not.  We also have \d{$\eta$} = $\utilde{\eta}$ and $\dot{\eta} = \tilde{\eta}$.  And $\langle \eta \rangle_{\sim_R}^G =  [$\d{$\eta$}, $\dot{\eta}]^G = [\utilde{\eta}, \tilde{\eta}] = \langle \eta \rangle_{\sim_R}$:  The equivalence classes $UGC_R(n) \supseteq \langle \eta \rangle_{\sim_R}^G$ and $\langle \eta \rangle_{\sim_R} \subseteq U_R(n)$ are the same subset of $U_R(n)$, which is an interval in both contexts.  The $R$-core \d{$\eta$} of $\eta$ (respectively $R$-canopy $\dot{\eta}$ of $\eta$) is the gapless $R$-tuple (respectively $R$-canopy tuple) for the flag $R$-critical list of $\eta$. \label{prop608.4.2}

\item In $UF_R(n)$ we have $\langle \phi \rangle_{\sim_R}^F =  [$\b{$\phi$}, $\bar{\phi}]^F$.  The $R$-floor \b{$\phi$} of $\phi$ (respectively $R$-ceiling $\bar{\phi}$ of $\phi$) is the $R$-floor flag (respectively $R$-ceiling flag) for the flag $R$-critical list of $\phi$.  We have $[$\b{$\phi$}, $\bar{\phi}]^F \subseteq $ \\ $[$\d{$\phi$}, $\dot{\phi} ] = \langle \phi \rangle_{\sim_R} \subseteq UGC_R(n)$. \label{prop608.4.3} \end{enumerate} \end{prop}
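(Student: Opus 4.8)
The plan is to reduce the whole proposition to Lemma~\ref{lemma608.2} together with Propositions~\ref{prop604.4} and \ref{prop604.6}. By Lemma~\ref{lemma608.2}(\ref{lemma608.2.1}), for $\upsilon\in U_R(n)$ the class $\langle\upsilon\rangle_{\sim_R}$ consists exactly of the upper $R$-tuples sharing the $R$-critical list of $\upsilon$, and by Lemma~\ref{lemma608.2}(\ref{lemma608.2.2}) it is closed under meet and join, so it has a least element and a greatest element. First I would check $\utilde{\upsilon}=\Delta_R(\upsilon)$: every $\upsilon'\sim_R\upsilon$ has $\{\upsilon'\}_R=\{\upsilon\}_R=[\Delta_R(\upsilon)]$, so $\Delta_R(\upsilon')=\Delta_R(\upsilon)$ (both are the top of this principal ideal in $UI_R(n)$), and $\Delta_R(\upsilon')\le\upsilon'$ by Fact~\ref{fact604.2}; hence $\Delta_R(\upsilon)$ is a lower bound for the class and it lies in the class. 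For the interval equality $\langle\upsilon\rangle_{\sim_R}=[\utilde{\upsilon},\tilde{\upsilon}]$ (only ``$\subseteq$'' is immediate), I would use that $\upsilon\mapsto\{\upsilon\}_R$ is monotone: if $\utilde{\upsilon}\le\upsilon''\le\tilde{\upsilon}$ then $[\utilde{\upsilon}]=\{\utilde{\upsilon}\}_R\subseteq\{\upsilon''\}_R\subseteq\{\tilde{\upsilon}\}_R=\{\upsilon\}_R=[\utilde{\upsilon}]$, forcing $\upsilon''\sim_R\upsilon$.

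For the characterizations of the extremes in (i): $\Delta_R(\upsilon)\in UI_R(n)$ carries the $R$-critical list of $\upsilon$, so by the uniqueness in Proposition~\ref{prop604.6}(\ref{prop604.6.1}) it is the $R$-increasing upper tuple for that list; and the $R$-shell tuple $\rho$ for that list lies in the class, so $\rho\le\tilde{\upsilon}$, while $\rho$ agrees with $\tilde{\upsilon}$ at every critical index (the critical list dictates the critical entries) and equals $n$ at every non-critical index, so $\rho\ge\tilde{\upsilon}$ entrywise; hence $\tilde{\upsilon}=\rho$. Part~(ii) then follows quickly: by Proposition~\ref{prop604.4}(\ref{prop604.4.2}), belonging to $UGC_R(n)$ depends only on the $R$-core, which is constant on $\langle\upsilon\rangle_{\sim_R}$; this gives the dichotomy, and the first alternative holds exactly when $\upsilon\in UGC_R(n)$ since then $\Delta_R(\upsilon)\in UG_R(n)$ by Proposition~\ref{prop604.4}(\ref{prop604.4.1}). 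Consequently, for $\eta\in UGC_R(n)$ the interval $[\utilde{\eta},\tilde{\eta}]=\langle\eta\rangle_{\sim_R}$ sits inside $UGC_R(n)$, so $\langle\eta\rangle_{\sim_R}^G=\langle\eta\rangle_{\sim_R}$, its $UGC_R(n)$-extremes are $\utilde{\eta}$ and $\tilde{\eta}$ (this is $\dot{\eta}=\tilde{\eta}$ and the corresponding statement for the minimum), and by Proposition~\ref{prop604.6}(\ref{prop604.6.2}) (the $R$-critical list of $\eta$ being a flag one) the $R$-core of $\eta$ is the gapless $R$-tuple and the $R$-canopy $\dot{\eta}=\tilde{\eta}$ is the $R$-canopy tuple for that flag critical list, the latter because $\tilde{\eta}$ is an $R$-shell tuple lying in $UGC_R(n)$.

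For (iii), $UF_R(n)$ is a sublattice and the class is closed under meet and join, so $\langle\phi\rangle_{\sim_R}^F=\langle\phi\rangle_{\sim_R}\cap UF_R(n)$ has a least and a greatest element; any flag lying between them lies in $\langle\phi\rangle_{\sim_R}=[\utilde{\phi},\tilde{\phi}]$, which gives the interval equality inside $UF_R(n)$. The $R$-ceiling $\bar{\phi}$ equals the $R$-ceiling flag $\xi$ for the flag critical list of $\phi$: $\xi$ lies in the class, while any flag $\phi'$ with that critical list satisfies $\phi'_i\le\phi'_x=y_x=\xi_i$ for the least critical index $x\ge i$. The $R$-floor of $\phi$ equals the $R$-floor flag $\tau$ for that list: $\tau$ lies in the class, while any such flag $\phi'$ dominates $\tau$ pointwise, using $\Delta_R(\phi')\le\phi'$ (Fact~\ref{fact604.2}) at indices interior to a staircase and the flag property $\phi'_i\ge\phi'_{q_h}=y_{q_h}$ at indices before the first critical index of a carrel, so as to match the $\max$ in the definition of $\tau$. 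Finally, $\langle\phi\rangle_{\sim_R}^F\subseteq\langle\phi\rangle_{\sim_R}$, and since $\phi\in UF_R(n)\subseteq UGC_R(n)$, part~(ii) applied with $\eta=\phi$ gives $\langle\phi\rangle_{\sim_R}=[\utilde{\phi},\tilde{\phi}]\subseteq UGC_R(n)$, which is the asserted containment chain.

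The step I expect to be the main obstacle is the pair of pointwise-extremality verifications, and above all the claim that the least element of $\langle\phi\rangle_{\sim_R}^F$ is the $R$-floor flag $\tau$. Since $\tau$ is built by a staircase recursion within each carrel and then capped below by the boundary value $\tau_{q_h}$ at the start of the next carrel, confirming $\phi'\ge\tau$ for an arbitrary flag $\phi'$ with the prescribed $R$-critical list requires handling separately the indices interior to a staircase (where $\phi'_i\ge\Delta_R(\phi')_i=\tau_i$) and the indices preceding the first critical index of a carrel (where one must also invoke monotonicity of $\phi'$ across the divider). Everything else is bookkeeping with Lemma~\ref{lemma608.2}, Proposition~\ref{prop604.4}, and the uniqueness statements of Proposition~\ref{prop604.6}.
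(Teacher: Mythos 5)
Your proposal is correct: the identification $\utilde{\upsilon}=\Delta_R(\upsilon)$, the interval description via monotonicity of $\beta\mapsto\{\beta\}_R$, the dichotomy in (ii) via Proposition \ref{prop604.4}(\ref{prop604.4.2}), and the pointwise extremality checks identifying $\tilde{\upsilon}$, \b{$\phi$}, $\bar{\phi}$ with the shell tuple, floor flag, and ceiling flag of Proposition \ref{prop604.6} are all sound, including the delicate case in the floor-flag verification where you combine $\phi'_i\geq\Delta_R(\phi')_i$ with the flag inequality $\phi'_i\geq\phi'_{q_h}=y_{q_h}$ to match the $\max$ in the construction of $\tau$. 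Note that this paper does not contain its own proof of Proposition \ref{prop608.4} (Section \ref{608} defers all proofs to \cite{PW2}), so no line-by-line comparison is possible; your argument uses exactly the toolkit the paper sets up for this purpose (Fact \ref{fact604.2}, Propositions \ref{prop604.4} and \ref{prop604.6}, and Lemma \ref{lemma608.2}) and can stand as a self-contained proof.
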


\noindent So for $\upsilon \in U_R(n)$ the equivalence classes $\langle \upsilon \rangle_{\sim_R}$ are intervals $[ \utilde{\upsilon}, \tilde{\upsilon}]$ that lie entirely in $U_R(n) \backslash UGC_R(n)$ or entirely in $UGC_R(n)$, in which case they coincide with the equivalence classes $\langle \eta \rangle_{\sim_R}^G = [$\d{$\eta$}, $\dot{\eta}]^G$ for $\eta \in UGC_R(n)$ originally defined by restricting $\sim_R$ to $UGC_R(n)$.  However, although for $\phi \in UF_R(n)$ the equivalence class $\langle \phi \rangle_{\sim_R}^F$ is an interval $[$\b{$\phi$}, $\bar{\phi}]^F$ when working within $UF_R(n)$, it can be viewed as consisting of some of the elements of the interval $[$\d{$\phi$}, $\dot{\phi} ]^G$ of $UGC_R(n)$ (or of $U_R(n)$) that is formed by viewing $\phi$ as an element of $UGC_R(n)$.

\begin{cor}\label{cor608.6}The equivalence classes of $\sim_R$ can be indexed as follows:
\begin{enumerate}[(i)]\setlength\itemsep{-.5em}

\item In $U_R(n)$, they are precisely indexed by the $R$-increasing upper tuples or the $R$-shell tuples (or by the $R$-critical lists). \label{cor608.6.1}

\item In $UGC_R(n)$, they are precisely indexed by the gapless $R$-tuples or the $R$-canopy tuples (or by the flag $R$-critical lists, the $R$-floor flags, or the $R$-ceiling flags). \label{cor608.6.2}

\item In $UF_R(n)$, they are precisely indexed by the $R$-floor flags or the $R$-ceiling flags (or by the flag $R$-critical lists, the gapless $R$-tuples, or the $R$-canopy tuples). \label{cor608.6.3} \end{enumerate} \end{cor}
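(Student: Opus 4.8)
The plan is to funnel all three statements through the $R$-critical list, which Lemma \ref{lemma608.2}(\ref{lemma608.2.1}) identifies as the complete invariant of a $\sim_R$-class: two members of $U_R(n)$ are $\sim_R$-equivalent precisely when they carry the same $R$-critical list. So I would first argue that the assignment sending a class to the common $R$-critical list of its members is a bijection from the $\sim_R$-classes in $U_R(n)$ onto the set of all $R$-critical lists. Injectivity is immediate from the Lemma; surjectivity follows because any prescribed $R$-critical list $\mathcal{C}$ is the $R$-critical list of the $R$-increasing upper tuple built from $\mathcal{C}$ in Proposition \ref{prop604.6}(\ref{prop604.6.1}), which lies in $U_R(n)$. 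Composing this bijection with the bijections of Corollary \ref{cor604.8} from $R$-critical lists onto the $R$-increasing upper tuples and onto the $R$-shell tuples gives part (i); I would also remark that, concretely, the $R$-increasing upper tuple attached to a class is its minimum $\utilde{\upsilon} = \Delta_R(\upsilon)$ and the $R$-shell tuple is its maximum, by Proposition \ref{prop608.4}(\ref{prop608.4.1}).

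For part (ii), the key point is to relate the $\sim_R$-classes of the \emph{restricted} relation on $UGC_R(n)$ to the $\sim_R$-classes on $U_R(n)$. Proposition \ref{prop608.4}(\ref{prop608.4.2}) says that each class $\langle\upsilon\rangle_{\sim_R}$ is an interval lying wholly inside $UGC_R(n)$ or wholly outside it, and that $\langle\eta\rangle_{\sim_R}^G = \langle\eta\rangle_{\sim_R}$ for $\eta \in UGC_R(n)$; hence restriction neither splits nor merges classes, and the classes in $UGC_R(n)$ are exactly the $U_R(n)$-classes contained in $UGC_R(n)$. By Proposition \ref{prop604.4}(\ref{prop604.4.2}), membership in $UGC_R(n)$ is detected by the $R$-critical list being a \emph{flag} $R$-critical list. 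So, under the bijection of part (i), the classes inside $UGC_R(n)$ match up with the flag $R$-critical lists, each of which indeed occurs (e.g.\ via its gapless $R$-tuple, which lies in $UGC_R(n)$). Restricting the bijections of Corollary \ref{cor604.8} to flag $R$-critical lists then delivers the bijections onto the gapless $R$-tuples, $R$-canopy tuples, $R$-floor flags, and $R$-ceiling flags asserted in (ii).

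Part (iii) is handled the same way, now using $UF_R(n) \subseteq UGC_R(n)$: every $\phi \in UF_R(n)$ carries a flag $R$-critical list, and $\phi \sim_R \phi'$ inside $UF_R(n)$ iff they share it, so the $\sim_R$-classes in $UF_R(n)$ inject into the flag $R$-critical lists; surjectivity holds because the $R$-floor flag (equivalently, the $R$-ceiling flag) associated to a flag $R$-critical list by Proposition \ref{prop604.6}(\ref{prop604.6.2}) lies in $UF_R(n)$ and has that list. A final restriction of Corollary \ref{cor604.8} completes (iii).

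I expect the only genuinely delicate step to be the bookkeeping in parts (ii) and (iii): verifying that passing to the restricted relation on a subset does not split one ambient class into several or fuse several into one. This is precisely what Propositions \ref{prop608.4} and \ref{prop604.4}(\ref{prop604.4.2}) were set up to control — the ``interval swallowed-or-disjoint'' behavior for $UGC_R(n)$, and, for $UF_R(n)$, the fact that the restricted classes are literally the traces of the ambient classes, nonempty exactly for the flag $R$-critical lists. Everything else is a mechanical transport of structure along the already-proved bijections between (flag) $R$-critical lists and the six named families of $R$-tuples.
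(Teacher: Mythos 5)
Your argument is correct and follows essentially the route the paper intends (its proof is deferred to the precursor \cite{PW2}): treat the $R$-critical list as the complete invariant of a $\sim_R$-class via Lemma \ref{lemma608.2}(\ref{lemma608.2.1}), use Proposition \ref{prop608.4} to see that restricting to $UGC_R(n)$ and $UF_R(n)$ keeps the classes in bijection with the (flag) $R$-critical lists, and transport the indexing along the bijections of Corollary \ref{cor604.8}. No gaps worth flagging.
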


If the gapless $R$-tuple label for an equivalence class in $UF_R(n)$ is not a flag, we may want to convert it to the unique $R$-floor (or $R$-ceiling) flag that belongs to the same class.  Let $\gamma \in UG_R(n)$ and find the flag $R$-critical list of $\gamma$.  As in Section \ref{604}, compute the $R$-floor flag $\tau$ and the $R$-ceiling $\xi$ for this flag $R$-critical list.  Define the \emph{$R$-floor map} $\Phi_R : UG_R(n) \longrightarrow UFlr(n)$ and \emph{$R$-ceiling map} $\Xi_R: UG_R(n) \longrightarrow UCeil_R(n)$ by $\Phi_R(\gamma) := \tau$ and $\Xi_R(\gamma) := \xi$.  See Figure 5.1.

\begin{prop}\label{prop608.10}The maps $\Phi_R:  UG_R(n) \longrightarrow UFlr_R(n)$ and $\Xi_R:  UG_R(n) \longrightarrow UCeil_R(n)$ are bijections; each has inverse $\Delta_R$.  \end{prop}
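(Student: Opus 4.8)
The plan is to observe that $\Phi_R$, $\Xi_R$, and the relevant restrictions of $\Delta_R$ are all just different ways of re-encoding a \emph{flag} $R$-critical list, so that the whole statement reduces to Corollary \ref{cor604.8} together with one short observation about $\Delta_R$. First I would fix notation for the bijections furnished by Corollary \ref{cor604.8}: let $G$, $F$, and $E$ denote the bijections from the set of flag $R$-critical lists onto $UG_R(n)$, $UFlr_R(n)$, and $UCeil_R(n)$ respectively, each sending a flag $R$-critical list $\mathcal{C}$ to the unique gapless $R$-tuple, $R$-floor flag, resp.\ $R$-ceiling flag whose $R$-critical list is $\mathcal{C}$ (Proposition \ref{prop604.6}(\ref{prop604.6.2})). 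By the proof of Corollary \ref{cor604.8}, the inverse of each of $G$, $F$, $E$ is the operation of reading off the $R$-critical list of the input tuple.

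Next I would unwind the definitions of $\Phi_R$ and $\Xi_R$. Given $\gamma \in UG_R(n) \subseteq UGC_R(n)$, its $R$-critical list is a flag $R$-critical list and is precisely $G^{-1}(\gamma)$; by construction $\Phi_R(\gamma)$ is the $R$-floor flag for that list, i.e.\ $\Phi_R(\gamma) = F(G^{-1}(\gamma))$, and likewise $\Xi_R(\gamma) = E(G^{-1}(\gamma))$. Hence $\Phi_R = F \circ G^{-1}$ and $\Xi_R = E \circ G^{-1}$ are composites of bijections, so they are bijections, with inverses $\Phi_R^{-1} = G \circ F^{-1}$ and $\Xi_R^{-1} = G \circ E^{-1}$.

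It remains to identify these inverses with $\Delta_R$, which is the only step requiring an argument (and a brief one). Note that $UFlr_R(n) \subseteq UF_R(n)$ and $UCeil_R(n) \subseteq UF_R(n)$, so $\Delta_R$ is defined on both. For $\tau \in UFlr_R(n)$, Fact \ref{fact604.2} gives that $\Delta_R(\tau)$ has the same $R$-critical list as $\tau$, namely $F^{-1}(\tau)$, while Proposition \ref{prop604.4}(\ref{prop604.4.1}) gives $\Delta_R(\tau) \in UG_R(n)$; by the uniqueness clause of Proposition \ref{prop604.6}(\ref{prop604.6.2}), $\Delta_R(\tau)$ is therefore forced to be the gapless $R$-tuple for the flag $R$-critical list $F^{-1}(\tau)$, that is, $\Delta_R(\tau) = G(F^{-1}(\tau)) = \Phi_R^{-1}(\tau)$. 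The identical argument with $E$ and $\xi \in UCeil_R(n)$ in place of $F$ and $\tau$ gives $\Delta_R(\xi) = \Xi_R^{-1}(\xi)$, completing the proof. I expect no genuine obstacle here: all the substance is already in Section \ref{604}, and the one point to be careful about is confirming that $\Delta_R$ applied to a floor flag or a ceiling flag returns the \emph{gapless} $R$-tuple sharing its critical list rather than some other $R$-increasing upper tuple with that list — which is immediate once one combines the critical-list preservation of $\Delta_R$ with the uniqueness of the gapless $R$-tuple having a prescribed flag $R$-critical list. (Alternatively, one could bypass $G$, $F$, $E$ and check directly from the constructions in Section \ref{604} that $\Delta_R \circ \Phi_R = \mathrm{id}_{UG_R(n)}$ and $\Phi_R \circ \Delta_R = \mathrm{id}_{UFlr_R(n)}$, and similarly for $\Xi_R$, but the bijection bookkeeping above seems cleaner.)
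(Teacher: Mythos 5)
Your argument is correct: composing the bijections of Corollary \ref{cor604.8} and then identifying the inverse with $\Delta_R$ via Fact \ref{fact604.2}, Proposition \ref{prop604.4}(\ref{prop604.4.1}), and the uniqueness clause of Proposition \ref{prop604.6}(\ref{prop604.6.2}) is exactly the machinery this section sets up for the purpose, so you have taken what is essentially the intended route (the paper itself only defers the proof to \cite{PW2}). No gaps; the one delicate point you flag, that $\Delta_R$ of a floor or ceiling flag is the \emph{gapless} tuple for the shared flag $R$-critical list, is handled correctly.
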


To summarize:  In Section \ref{300} the six notions of $R$-increasing upper tuple, $R$-shell tuple, gapless $R$-tuple, $R$-canopy tuple, $R$-floor flag, and $R$-ceiling flag were defined with conditions on the entries of an $R$-tuple.  While introducing the word `for' into these terms, in Section \ref{604} one such $R$-tuple was associated to each (flag) $R$-critical list.  While introducing the word `of' into four of these terms, in this section these kinds of $R$-tuples arose as the extreme elements of equivalence classes.  This began with the classes in $U_R(n)$.  Here these extreme elements were respectively $R$-increasing upper and $R$-shell tuples.  When these classes were restricted to the subset $UGC_R(n)$ of upper $R$-tuples with gapless cores, these extreme elements were respectively gapless $R$-tuples and $R$-canopy tuples.  When these classes were restricted further to the subset $UF_R(n)$ of upper flags, these extreme elements were respectively $R$-floor and $R$-ceiling flags.

\begin{figure}[h!]
  \begin{center}
  \includegraphics{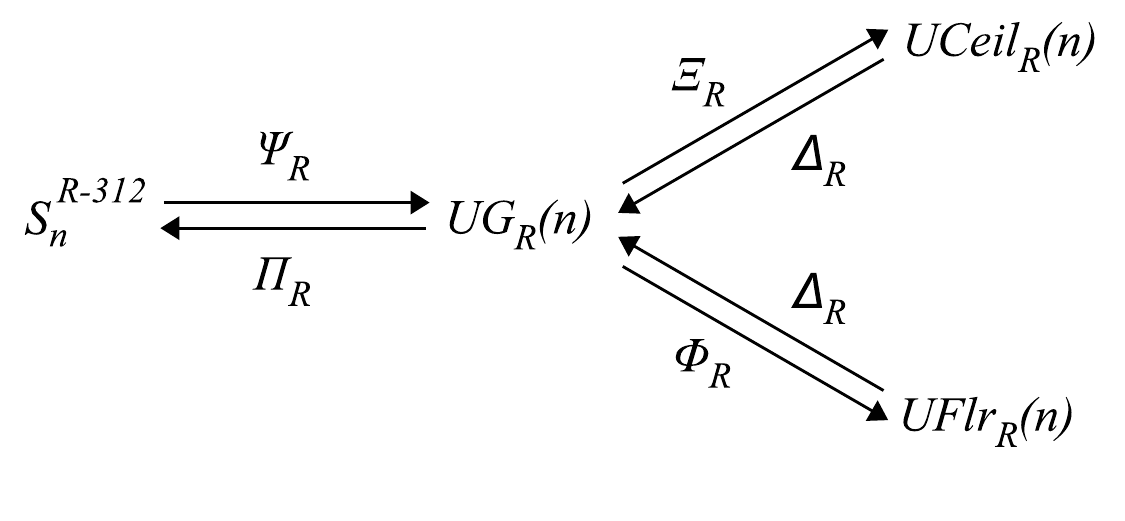}
  \caption*{Figure 5.1.  Proposition 5.4 and Proposition 7.2(ii)}
  \end{center}
\end{figure}

\section{Shapes, tableaux, connections to Lie theory}\label{800}

A \emph{partition} is an $n$-tuple $\lambda \in \mathbb{Z}^n$ such that $\lambda_1 \geq \ldots \geq \lambda_n \geq 0$.  Fix such a $\lambda$ for the rest of the paper.  We say it is \emph{strict} if $\lambda_1 > \ldots > \lambda_n$.  The \emph{shape} of $\lambda$, also denoted $\lambda$, consists of $n$ left justified rows with $\lambda_1, \ldots, \lambda_n$ boxes.  We denote its column lengths by $\zeta_1 \geq \ldots \geq \zeta_{\lambda_1}$.  The column length $n$ is called the \emph{trivial} column length.  Since the columns are more important than the rows, the boxes of $\lambda$ are transpose-indexed by pairs $(j,i)$ such that $1 \leq j \leq \lambda_1$ and $1 \leq i \leq \zeta_j$.  Sometimes for boundary purposes we refer to a $0^{th}$ \emph{latent column} of boxes, which is a prepended $0^{th}$ column of trivial length.  If $\lambda = 0$, its shape is the \emph{empty shape} $\emptyset$.  Define $R_\lambda \subseteq [n-1]$ to be the set of distinct non-trivial column lengths of $\lambda$.  Note that $\lambda$ is strict if and only if $R_\lambda = [n-1]$, i.e. $R_\lambda$ is full.  Set $|\lambda| := \lambda_1 + \ldots + \lambda_n$.

A \emph{(semistandard) tableau of shape $\lambda$} is a filling of $\lambda$ with values from $[n]$ that strictly increase from north to south and weakly increase from west to east.  Let $\mathcal{T}_\lambda$ denote the set of tableaux $T$ of shape $\lambda$.  Under entrywise comparison $\leq$, this set $\mathcal{T}_\lambda$ becomes a poset that is the distributive lattice $L(\lambda, n)$ introduced by Stanley.  The principal ideals $[T]$ in $\mathcal{T}_\lambda$ are clearly convex polytopes in $\mathbb{Z}^{|\lambda|}$.  Fix $T \in \mathcal{T}_\lambda$.  For $j \in [\lambda_1]$, we denote the one column ``subtableau'' on the boxes in the $j^{th}$ column by $T_j$.  Here for $i \in [\zeta_j]$ the tableau value in the $i^{th}$ row is denoted $T_j(i)$.  The set of values in $T_j$ is denoted $B(T_j)$.  Columns $T_j$ of trivial length must be \emph{inert}, that is $B(T_j) = [n]$.  The $0^{th}$ \emph{latent column} $T_0$ is an inert column that is sometimes implicitly prepended to the tableau $T$ at hand:  We ask readers to refer to its values as needed to fulfill definitions or to finish constructions.  We say a tableau $Y$ of shape $\lambda$ is a $\lambda$-\emph{key} if $B(Y_l) \supseteq B(Y_j)$ for $1 \leq l \leq j \leq \lambda_1$.  To define the \emph{content $\Theta(T) := \theta$ of $T$}, for $i \in [n]$ take $\theta_i$ to be the number of values in $T$ equal to $i$.  The empty shape has one tableau on it, the \emph{null tableau}.  Fix a set $Q \subseteq [n]$ with $|Q| =: q \geq 0$.  The \emph{column} $Y(Q)$ is the tableau on the shape for the partition $(1^q, 0^{n-q})$ whose values form the set $Q$.  Then for $d \in [q]$, the value in the $(q+1-d)^{th}$ row of $Y(Q)$ is $rank^d(Q)$.

The most important values in a tableau of shape $\lambda$ occur at the ends of its rows.  Using the latent column when needed, these $n$ values from $[n]$ are gathered into an $R_\lambda$-tuple as follows:  Let $T \in \mathcal{T}_\lambda$.  The \emph{$\lambda$-row end list} $\omega$ of $T$ is the $R_\lambda$-tuple defined by $\omega_i := T_{\lambda_i}(i)$ for $i \in [n]$.  Note that for $h \in [r+1]$ one has $\lambda_i = \lambda_{i^\prime}$ for $i, i^\prime \in (q_{h-1}, q_{h} ]$.  For $h \in [r+1]$ the entries in the $h^{th}$ cohort of $\omega$ are increasing.  So $\omega \in UI_{R_\lambda}(n)$.

For $h \in [r]$, the columns of length $q_h$ in the shape $\lambda$ have indices $j$ such that $j \in (\lambda_{q_{h+1}}, \lambda_{q_h}]$.  A bijection from $R$-chains $B$ to $\lambda$-keys $Y$ is obtained by juxtaposing from left to right $\lambda_n$ inert columns and $\lambda_{q_h}-\lambda_{q_{h+1}}$ copies of $Y(B_h)$ for $r \geq h \geq 1$.   We indicate it by $B \mapsto Y$.  A bijection from $R_\lambda$-permutations $\pi$ to $\lambda$-keys $Y$ is obtained by following $\pi \mapsto B$ with $B \mapsto Y$.  The image of an $R_\lambda$-permutation $\pi$ is called the \emph{$\lambda$-key of $\pi$}; it is denoted $Y_\lambda(\pi)$.  It is easy to see that the $\lambda$-row end list of the $\lambda$-key of $\pi$ is the rank $R_\lambda$-tuple $\Psi_{R_\lambda}(\pi) =: \psi$ of $\pi$:  Here $\psi_i = Y_{\lambda_i}(i)$ for $i \in [n]$.

Let $\alpha \in UI_{R_\lambda}(n)$.  Define $\mathcal{Z}_\lambda(\alpha)$ to be the subset of tableaux $T \in \mathcal{T}_\lambda$ that have $\lambda$-row end list $\alpha$.  To see that $\mathcal{Z}_\lambda(\alpha) \neq \emptyset$, for $i \in [n]$ take $T_j(i) := i$ for $j \in [1, \lambda_i)$ and $T_{\lambda_i}(i) := \alpha_i$.  This subset is closed under the join operation for the lattice $\mathcal{T}_\lambda$.  We define the \emph{$\lambda$-row end max tableau $M_\lambda(\alpha)$ for $\alpha$} to be the unique maximal element of $\mathcal{Z}_\lambda(\alpha)$.  The definition of $Q_\lambda(\beta)$, a close relative to $M_\lambda(\alpha)$, can be found in Section \ref{623}.

When we are considering tableaux of shape $\lambda$, much of the data used will be in the form of $R_\lambda$-tuples.  Many of the notions used will be definitions from Section \ref{300} that are being applied with $R := R_\lambda$.  The structure of each proof will depend only upon $R_\lambda$ and not upon any other aspect of $\lambda$:  If $\lambda^\prime$ and $\lambda^{\prime\prime}$ are partitions such that $R_{\lambda^\prime} = R_{\lambda^{\prime\prime}}$, then the development for $\lambda^{\prime\prime}$ will in essence be the same as for $\lambda^\prime$.  To emphasize the original independent entity $\lambda$ and to reduce clutter, from now on rather than writing `$R$' or `$R_\lambda$' we will replace `$R$' by `$\lambda$' in subscripts and in prefixes.  Above we would have written $\omega \in UI_\lambda(n)$ instead of having written $\omega \in UI_{R_\lambda}(n)$ (and instead of having written $\omega \in UI_R(n)$ after setting $R := R_\lambda$).  When $\lambda$ is a strict partition, we omit the `$\lambda$-' prefixes and the subscripts since $R_\lambda = [n-1]$.

To connect to Lie theory, fix $R \subseteq [n-1]$ and set $J := [n-1] \backslash R$.  The $R$-permutations are the one-rowed forms of the ``inverses'' of the minimum length representatives collected in $W^J$ for the cosets in $W /W_J$, where $W$ is the Weyl group of type $A_{n-1}$ and $W_J$ is its parabolic subgroup $\langle s_i: i \in J \rangle$.  A partition $\lambda$ is strict exactly when the weight it depicts for $GL(n)$ is strongly dominant.  If we take the set $R$ to be $R_\lambda$, then the restriction of the partial order $\leq$ on $\mathcal{T}_\lambda$ to the $\lambda$-keys depicts the Bruhat order on that $W^J$.  Consult the second and third paragraphs of Section \ref{737} for the Demazure and flag Schur polynomials.  Further details appear in Sections 2, 3,  and the appendix of \cite{PW1}.

\section{Results needed from the first paper}\label{320}

The content of this section should be referred to when it is cited in later sections.  We begin with the material that will be needed from Section 4 of \cite{PW3}:

Given a set of integers, a \emph{clump} of it is a maximal subset of consecutive integers.  After decomposing a set into its clumps, we index the clumps in the increasing order of their elements.  For example, the set $\{ 2,3,5,6,7,10,13,14 \}$ is the union $L_1 \hspace{.5mm} \cup  \hspace{.5mm} L_2  \hspace{.5mm} \cup \hspace{.5mm}  L_3  \hspace{.5mm} \cup  \hspace{.5mm} L_4$,  where  $L_1 := \{ 2,3 \},  L_2 := \{ 5,6,7 \}, L_3 := \{ 10 \},  L_4 := \{ 13,14 \}$.

We return to our fixed $R \subseteq [n-1]$.  (See Section 4 of \cite{PW3} for the simple full case $R = [n-1]$.)  Let $B$ be an $R$-chain.  We say $B$ is \emph{$R$-rightmost clump deleting} if this condition holds for each $h \in [r]$:  Let $B_{h+1} =: L_1 \cup L_2 \cup ... \cup L_f$ decompose $B_{h+1}$ into clumps for some $f \geq 1$.  We require $L_e \cup L_{e+1} \cup ... \cup L_f \supseteq B_{h+1} \backslash B_{h} \supseteq L_{e+1} \cup ... \cup L_f$ for some $e \in [f]$.  This condition requires the set $B_{h+1} \backslash B_h$ of new elements that augment the set $B_h$ of old elements to consist of entirely new clumps $L_{e+1}, L_{e+2}, ... , L_f$, plus some further new elements that combine with some old elements to form the next ``lower'' clump $L_e$ in $B_{h+1}$.  Here are some reformulations of the notion of $R$-rightmost clump deleting:

\begin{fact}\label{fact320.3}Let $B$ be an $R$-chain.  For $h \in [r]$, set $b_{h+1} := \min (B_{h+1} \backslash B_{h} )$ and $m_h := \max (B_h)$.  This $R$-chain is $R$-rightmost clump deleting if and only if each of the following holds:
\begin{enumerate}[(i)]\setlength\itemsep{-.5em}

\item  For $h \in [r]$, one has $[b_{h+1}, m_{h}] \subseteq B_{h+1}$. \label{fact320.3.1}

\item For $h \in [r]$, one has $(b_{h+1}, m_{h}) \subset B_{h+1}$. \label{fact320.3.2}

\item For $h \in [r]$, let $s$ be the number of elements of $B_{h+1} \backslash B_{h}$ that are less than $m_{h}$.  These must be the $s$ largest elements of $[m_{h}] \backslash B_{h}$. \label{fact320.3.3} \end{enumerate} \end{fact}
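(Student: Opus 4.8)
The plan is to prove the three characterizations by taking condition (i) as a hub: I would show $R$-rightmost clump deleting $\Leftrightarrow$ (i), then (i) $\Leftrightarrow$ (ii), then (i) $\Leftrightarrow$ (iii), working one carrel at a time (fixing $h \in [r]$ throughout). The two facts I expect to lean on constantly are that $m_h = \max(B_h)$ lies in $B_h \subseteq B_{h+1}$, and that $b_{h+1} = \min(B_{h+1}\setminus B_h)$ lies in $B_{h+1}$ and is the least element of $B_{h+1}$ outside $B_h$ --- so every element of $B_{h+1}$ below $b_{h+1}$ already lies in $B_h$.

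For the equivalence with (i) I would decompose $B_{h+1} = L_1 \cup \ldots \cup L_f$ into its clumps. For the forward direction, given a witnessing index $e$ (so $B_{h+1}\setminus B_h \subseteq L_e\cup\ldots\cup L_f$ and $L_{e+1}\cup\ldots\cup L_f \subseteq B_{h+1}\setminus B_h$), the clumps $L_{e+1},\ldots,L_f$ are entirely new, hence $B_h \subseteq L_1\cup\ldots\cup L_e$ and $m_h \le \max(L_e)$; if $L_e$ is also entirely old then all new elements lie above it, so $b_{h+1} > \max(L_e) \ge m_h$ and $[b_{h+1},m_h] = \emptyset$, while otherwise $b_{h+1}\in L_e$, so $b_{h+1}\ge\min(L_e)$ and $[b_{h+1},m_h] \subseteq [\min(L_e),\max(L_e)] = L_e \subseteq B_{h+1}$. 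Conversely, given (i), let $L_e$ be the clump containing $b_{h+1}$; the clumps below it consist of integers $< b_{h+1}$, hence are entirely old, giving $B_{h+1}\setminus B_h \subseteq L_e\cup\ldots\cup L_f$. If a higher clump $L_j$ ($j>e$) contained an old element $x$, then $b_{h+1} \le \max(L_e) < \max(L_e)+1 \le x \le m_h$, so the gap integer $\max(L_e)+1$ would lie in $[b_{h+1},m_h]\subseteq B_{h+1}$ by (i) --- impossible, since it separates $L_e$ from $L_{e+1}$. Hence $L_{e+1}\cup\ldots\cup L_f \subseteq B_{h+1}\setminus B_h$ and $e$ witnesses the condition.

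The equivalence (i) $\Leftrightarrow$ (ii) is then immediate, since $b_{h+1}$ and $m_h$ already lie in $B_{h+1}$, so adjoining the endpoints of the interval changes nothing. For (i) $\Leftrightarrow$ (iii) I would set $S := \{x \in B_{h+1}\setminus B_h : x < m_h\}$, so $|S| = s$ and $S \subseteq [m_h]\setminus B_h$; when $s = 0$ one has $b_{h+1} > m_h$ and (i) is automatic, and when $s \ge 1$ one has $\min(S) = b_{h+1} < m_h$. Assuming (i) with $s\ge1$: any $x \in [m_h]\setminus B_h$ with $x \ge b_{h+1}$ lies in $[b_{h+1},m_h] \subseteq B_{h+1}$ and outside $B_h$, hence in $S$, so $S$ is precisely the set of the $s$ largest elements of $[m_h]\setminus B_h$. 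Assuming (iii) with $s\ge1$: for $x \in [b_{h+1},m_h]$, either $x \in B_h$, or $x = m_h$, or $x \in [m_h]\setminus B_h$ with $x \ge b_{h+1} = \min(S)$ forcing $x \in S$; in every case $x \in B_{h+1}$, so (i) holds.

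I expect the main obstacle to be the bookkeeping in the $R$-rightmost-clump-deleting $\Leftrightarrow$ (i) step, where one must pin down where $b_{h+1}$ and $m_h$ sit relative to the witnessing clump and separately dispose of the several degenerate cases in which $[b_{h+1},m_h]$ is empty. The idea that streamlines both directions is to track the single gap integer $\max(L_e)+1$ immediately above the clump that contains (or lies just below) the new elements: condition (i) says exactly that no such gap falls between a new element and $m_h$, which is the $R$-rightmost-clump-deleting requirement rephrased. Everything else is routine.
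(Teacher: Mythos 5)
Your proposal is correct. Note that the paper itself offers no proof of this Fact --- it is imported without proof from Section 4 of the predecessor paper \cite{PW3} --- so there is no in-paper argument to compare against; judged on its own, your hub-and-spoke argument through condition (i) is sound: the clump bookkeeping in both directions of the equivalence with the $R$-rightmost clump deleting condition is handled correctly (locating $b_{h+1}$ in its clump $L_e$, disposing of the case where $L_e$ is entirely old, and using the gap integer $\max(L_e)+1$ to rule out old elements in higher clumps), and the equivalences (i) $\Leftrightarrow$ (ii) and (i) $\Leftrightarrow$ (iii) are argued cleanly. The only cosmetic omission is that in the $s=0$ case you should also remark that (iii) holds vacuously (the empty set is trivially the $0$ largest elements of $[m_h]\setminus B_h$), so that both directions of (i) $\Leftrightarrow$ (iii) are covered there; this is immediate and does not affect correctness.
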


In the proofs of both parts of the following result, one takes $(b_{h+1}, m_h)$ in Part (ii) above to be $( \min\{\pi_{q_{h}+1}, ... , \pi_{q_{h+1}} \} , \max \{\pi_1, ... , \pi_{q_{h}}\} )$ for $\pi \in S_n^R$ under the correspondence $\pi \leftrightarrow B$.  The bijection in Part (ii) below generalizes the inverse bijection for Theorem 14.1 of \cite{PS}.

\begin{prop}\label{prop320.2}For $R \subseteq [n-1]$ we have:
\begin{enumerate}[(i)]\setlength\itemsep{-.5em}

\item The restriction of the global bijection $\pi \mapsto B$ from $S_n^R$ to $S_n^{R\text{-}312}$ is a bijection to the set of $R$-rightmost clump deleting chains. \label{prop320.2.1}

\item The restriction of the rank $R$-tuple map $\Psi_R$ from $S_n^R$ to $S_n^{R\text{-}312}$ is a bijection to $UG_R(n)$ whose inverse is $\Pi_R$.  \label{prop320.2.2} \end{enumerate} \end{prop}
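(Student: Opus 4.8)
The plan is to prove Proposition~\ref{prop320.2} by reducing each statement to a known result via the two bijections already set up in the excerpt, then grinding out the clump bookkeeping only where it is genuinely needed. For Part~(i), I would argue directly from the definitions. Fix an $R$-permutation $\pi$ and let $B$ be the corresponding $R$-chain, so $B_h = \{\pi_1,\dots,\pi_{q_h}\}$. The key observation is that the $R$-$312$-containing condition — the existence of $h$ and indices $a \le q_h < b \le q_{h+1} < c$ with $\pi_a > \pi_c$ and $\pi_b < \pi_c < \pi_a$ — is, for a single $h$, exactly a statement about the pair $(B_h, B_{h+1})$: it says there is an old element $\pi_a \in B_h$, a new element $\pi_b \in B_{h+1}\backslash B_h$, and another element $\pi_c$ of $B_{h+1}$ with $\pi_b < \pi_c < \pi_a$. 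I would show this is equivalent to the failure, at that same $h$, of condition~(ii) of Fact~\ref{fact320.3}, namely $(b_{h+1}, m_h) \subset B_{h+1}$ with $b_{h+1} = \min(B_{h+1}\backslash B_h)$ and $m_h = \max(B_h)$. Indeed, if the open interval $(b_{h+1}, m_h)$ misses some integer $c'$, then picking $\pi_b := b_{h+1}$ (a new element), $\pi_a := m_h$ (an old element, hence with index $\le q_h$), and $\pi_c := c'$ realized at whatever position it occupies in $B_{h+1}$ gives an $R$-$312$ pattern at $h$; conversely an $R$-$312$ pattern at $h$ forces $\pi_b$ to be a new element below $\pi_a \le m_h$, hence $b_{h+1} \le \pi_b < \pi_c < \pi_a \le m_h$ with $\pi_c \notin B_{h+1}$, so $(b_{h+1}, m_h) \not\subset B_{h+1}$. (The one subtlety: I must check that when there \emph{is} a new element below $m_h$, one can always take $\pi_b$ to be the \emph{minimum} such, and that $\pi_a$ can be taken to be $m_h$ — this is just minimality/maximality, using that within the $(h{+}1)$st carrel $\pi$ is increasing so the position of $\pi_b = b_{h+1}$ is indeed $> q_h$.) Since $\pi \mapsto B$ is already a bijection from $S_n^R$ to $R$-chains, restricting it to $S_n^{R\text{-}312}$ lands bijectively on those $B$ for which condition~(ii) holds at every $h \in [r]$, i.e. on the $R$-rightmost clump deleting chains by Fact~\ref{fact320.3}.

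For Part~(ii), the cleanest route is to note that $\Psi_R$ composed with the inverse of $\pi \mapsto B$ on $R$-chains is determined by $B$ alone (the formula $\psi_i := \operatorname{rank}^{q_h - i + 1}(B_h)$ for $i \in (q_{h-1},q_h]$ depends only on the sets $B_h$), so it suffices to understand what $\Psi_R$ does on $R$-rightmost clump deleting chains. I would then verify two things: first, that $\Pi_R$ as defined in Section~\ref{300} is a genuine two-sided inverse, and second, that $\Psi_R$ maps $S_n^{R\text{-}312}$ \emph{onto} $UG_R(n)$. For the first, the excerpt already does most of the work: it is stated that $\max\{\pi_1,\dots,\pi_{q_h}\} = \gamma_{q_h}$ inductively, which gives $\Psi_R(\Pi_R(\gamma)) = \gamma$ after checking that the ranks match carrel by carrel; and for the other composition one checks that $\Pi_R(\Psi_R(\pi)) = \pi$ by using the rightmost-clump-deleting structure to see that the left-side entries reconstructed by $\Pi_R$ (the largest unused elements of $[\gamma_{q_h}]$) are exactly the original $\pi_i$ on the left side of the $(h{+}1)$st carrel. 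For surjectivity, I would show $\Pi_R$ actually lands in $S_n^{R\text{-}312}$ (not just in $S_n^R$): the $R$-chain $B$ attached to $\Pi_R(\gamma)$ satisfies condition~(iii) of Fact~\ref{fact320.3} essentially by construction, since the $s$ left-side entries filled in on the $(h{+}1)$st carrel are by definition the $s$ largest elements of $[\gamma_{q_h}]\backslash B_h = [m_h]\backslash B_h$ — so $B$ is $R$-rightmost clump deleting, hence $\Pi_R(\gamma) \in S_n^{R\text{-}312}$. Combined with $\Psi_R \circ \Pi_R = \mathrm{id}$, this yields that $\Psi_R|_{S_n^{R\text{-}312}}$ is a bijection onto $UG_R(n)$ with inverse $\Pi_R$, provided one also checks $\Psi_R(S_n^{R\text{-}312}) \subseteq UG_R(n)$, i.e. that the rank tuple of an $R$-$312$-avoiding permutation is $R$-gapless. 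That last inclusion is where I would again invoke Fact~\ref{fact320.3}: if $\psi = \Psi_R(\pi)$ and $\psi_{q_h} > \psi_{q_h+1}$, then translating the gapless condition through the rank formula shows the leftmost staircase on the $(h{+}1)$st carrel must reach up to $\psi_{q_h}$ precisely because $[b_{h+1}, m_h] \subseteq B_{h+1}$ (condition~(i)).

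The main obstacle I anticipate is the careful translation in Part~(ii) between the ``rank of a set'' language of $\Psi_R$ and the ``staircase/gapless'' language of $UG_R(n)$ — keeping straight that $\psi_{q_h} = \max(B_h) = m_h$, that the first entry of the $(h{+}1)$st carrel is $\psi_{q_h+1} = \operatorname{rank}^{p_{h+1}}(B_{h+1}) = \min(B_{h+1})$ (not $\min(B_{h+1}\backslash B_h)$!), and that the contiguous block of values $\gamma_{q_h}-s+1,\dots,\gamma_{q_h}$ demanded by the original definition of ``gapless'' corresponds exactly to the clump $L_e$ of $B_{h+1}$ that straddles old and new elements. Much of this is bookkeeping that Section~\ref{604} has effectively pre-packaged (via the critical-list description and Proposition~\ref{prop604.4}(\ref{prop604.4.3})), and indeed Part~(ii) is explicitly attributed to Proposition~4.6(ii) of \cite{PW3}, so in the write-up I would keep the argument terse: cite \cite{PW3} for the bijection $\Psi_R \leftrightarrow \Pi_R$ and concentrate the new work on Part~(i)'s equivalence between $R$-$312$-avoidance and the rightmost-clump-deleting condition, with Part~(ii)'s surjectivity onto $UG_R(n)$ following once Part~(i) identifies the image of $\pi \mapsto B$.
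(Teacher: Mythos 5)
Your overall architecture is sound, and it matches the only guidance this paper gives: Proposition~\ref{prop320.2} is imported from Section 4 of \cite{PW3} (Part (ii) is explicitly attributed to Proposition 4.6(ii) there), and the paper's sole remark about its proof is that in both parts one takes $(b_{h+1},m_h)$ of Fact~\ref{fact320.3} to be $(\min\{\pi_{q_h+1},\dots,\pi_{q_{h+1}}\},\max\{\pi_1,\dots,\pi_{q_h}\})$ under $\pi\leftrightarrow B$. Your Part (i) does exactly this, and the two displayed implications (failure of Fact~\ref{fact320.3}(ii) at $h$ gives a pattern, and a pattern at $h$ gives a failure) are correct; the only unremarked point is that at $h=r$ both conditions are vacuous (no index $c>q_{r+1}=n$ exists, and $(b_{r+1},m_r)\subset[n]=B_{r+1}$ automatically), which reconciles the definition's range $h\in[r-1]$ with the range $h\in[r]$ in Fact~\ref{fact320.3}. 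Part (ii) as you outline it (the two compositions plus the two image containments, using Fact~\ref{fact320.3}(i) and (iii)) is the right skeleton, and deferring its details to \cite{PW3} is consistent with what this paper itself does.

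Two concrete slips should be repaired. First, your restatement of $R$-312-containment calls $\pi_c$ ``another element of $B_{h+1}$''; since $c>q_{h+1}$, the point is precisely that $\pi_c\notin B_{h+1}$, and with your reading the equivalence is false: for $B_h=\{1,4\}$, $B_{h+1}=\{1,2,4\}$ (with $3$ in a later carrel) the pattern $4,2,3$ is present but no element of $B_{h+1}$ lies strictly between $2$ and $4$. Your converse direction uses the correct fact, but the forward direction should say that the value $c'$ occupies a position of $\pi$ beyond $q_{h+1}$ \emph{because} $c'\notin B_{h+1}$, not ``a position in $B_{h+1}$''. Second, in your bookkeeping paragraph, $\psi_{q_h+1}=\mathrm{rank}^{p_{h+1}}(B_{h+1})$ is not $\min(B_{h+1})$ once $h\ge 1$: in the Table 3.2 example $B_2=\{1,2,4,5,6,7,8,9\}$ and $\psi_4=5\neq 1$. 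The correct identification (the leftmost carrel entry is the $p_{h+1}$-th largest element of $B_{h+1}$) is exactly what makes $s=\psi_{q_h}-\psi_{q_h+1}+1$ count the elements of $B_{h+1}\setminus B_h$ below $m_h$, which is what your appeals to Fact~\ref{fact320.3}(i)/(iii) require for both the gaplessness of $\Psi_R(\pi)$ and the verification $\Psi_R\circ\Pi_R=\mathrm{id}$; with $\min(B_{h+1})$ in its place those checks derail. Relatedly, if you do write out $\Pi_R$ as quoted in Section~\ref{300}, note the boundary case $\gamma_{q_h}=\gamma_{q_h+1}$ (where one new element lies below $m_h$, as in the last carrel of the Table 3.2 example) needs the same left-side filling rule rather than $s=0$. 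None of this changes your approach, but all of it must be straightened out in a full write-up.
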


We also need some material from Section 5 of \cite{PW3}.  We return to our fixed partition $\lambda$.

\begin{lem}\label{lemma340.1}The $\lambda$-row end max tableau $M_{\lambda}(\gamma)$ of a gapless $\lambda$-tuple $\gamma$ is a key.  \end{lem}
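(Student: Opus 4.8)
The plan is to show that $M_\lambda(\gamma)$ is a $\lambda$-key by producing it explicitly as the $\lambda$-key $Y_\lambda(\pi)$ of the $R$-312-avoiding $R$-permutation $\pi := \Pi_R(\gamma)$, where $R = R_\lambda$. By Proposition~\ref{prop320.2}(ii), since $\gamma \in UG_\lambda(n)$, the permutation $\pi$ is a well-defined element of $S_n^{R\text{-}312}$ with $\Psi_R(\pi) = \gamma$. The $\lambda$-key $Y_\lambda(\pi)$ has $\lambda$-row end list equal to $\Psi_R(\pi) = \gamma$ (this was observed in Section~\ref{800}: the row-end list of $Y_\lambda(\pi)$ is the rank $R$-tuple $\psi$ of $\pi$). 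So $Y_\lambda(\pi) \in \mathcal{Z}_\lambda(\gamma)$, and since $M_\lambda(\gamma)$ is by definition the unique \emph{maximal} element of $\mathcal{Z}_\lambda(\gamma)$, it suffices to prove $Y_\lambda(\pi) \geq T$ for every $T \in \mathcal{Z}_\lambda(\gamma)$. That would force $M_\lambda(\gamma) = Y_\lambda(\pi)$, which is a key by construction.

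First I would set up the entrywise comparison column by column. Fix $T \in \mathcal{Z}_\lambda(\gamma)$ and a column index $j$; say column $j$ has length $q_h$ for some $h \in [r+1]$ (trivial columns are inert in both tableaux, so there is nothing to check there). I want $B(T_j) \leq B((Y_\lambda(\pi))_j)$ in the sense that $rank^d(B(T_j)) \leq rank^d(B_h)$ for all $d$, where $B_h = \{\pi_1, \dots, \pi_{q_h}\}$ is the relevant block of the $R$-chain associated to $\pi$ and $(Y_\lambda(\pi))_j = Y(B_h)$. The key structural input is the description of $\Pi_R$ from Section~\ref{300}: the new elements entering $B_h$ over $B_{h-1}$, when read against $\max(B_{h-1}) = \gamma_{q_{h-1}}$, are precisely the largest available elements of $[\gamma_{q_{h-1}}]$ together with the large entries $\gamma_i$ from the right side of the carrel. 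Equivalently, by Fact~\ref{fact320.3} and Proposition~\ref{prop320.2}(i), the $R$-chain of $\pi$ is $R$-rightmost clump deleting, which pins down exactly which sets $B_h$ occur. I would then argue that among all tableaux $T$ of shape $\lambda$ whose $j$-th column ends (in row $q_h$) with the value $\gamma_{q_h}$ and which sit inside a semistandard tableau realizing the full row-end list $\gamma$, the set $B_h$ is the pointwise-largest possible column content: the semistandard (column-strict, row-weak) constraints bound $rank^d(B(T_j))$ from above, and $B_h$ meets each such bound. This is where the gapless hypothesis does its work — it is exactly what guarantees the needed clump/staircase elements are available so that $B_h$ is simultaneously admissible and maximal.

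The main obstacle I anticipate is the last comparison: showing that $B_h$ really is the entrywise maximum over all admissible column contents, i.e. that no semistandard tableau of shape $\lambda$ with row-end list $\gamma$ can have a $j$-th column whose $d$-th largest entry exceeds $rank^d(B_h)$. This requires propagating the row-weak-increase constraint from column $j$ rightward to the row ends (which are pinned to the values $\gamma_i$) and the column-strict constraint downward, and checking these against the explicit recursive formula for $\Pi_R$. I expect the cleanest route is an induction on $h$ (equivalently, on columns from right to left across each block of equal column lengths): assuming $(Y_\lambda(\pi))_{j'} \geq T_{j'}$ for all columns $j'$ to the right of $j$, deduce it for column $j$ using that $B_{h}$ is obtained from $B_{h+1}$ by the rightmost-clump-deletion rule, which is precisely the worst case the semistandard constraints allow. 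Once the column-wise domination is established for all $j$, entrywise domination $Y_\lambda(\pi) \geq T$ follows immediately, and hence $M_\lambda(\gamma) = Y_\lambda(\pi)$ is a key.
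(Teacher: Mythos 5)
Your reduction is set up correctly, but note that this paper contains no proof of Lemma \ref{lemma340.1} to compare against: it is imported from Section 5 of \cite{PW3}. What you are actually proposing is a direct proof of the stronger statement, Theorem \ref{theorem340}(\ref{theorem340.2}) (that $M_\lambda(\gamma)=Y_\lambda(\pi)$ for $\pi:=\Pi_\lambda(\gamma)$), which is also quoted here from \cite{PW3}; as a standalone route that is legitimate, since you only use Proposition \ref{prop320.2} and the Section \ref{800} fact that the $\lambda$-row end list of $Y_\lambda(\pi)$ is $\Psi_\lambda(\pi)=\gamma$, so $Y_\lambda(\pi)\in\mathcal{Z}_\lambda(\gamma)$ and it suffices to show $T\le Y_\lambda(\pi)$ for every $T\in\mathcal{Z}_\lambda(\gamma)$. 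The right-to-left induction skeleton is also sound: within a block of equal column lengths the comparison $T_j\le T_{j+1}$ passes the bound along, so all the work sits at the rightmost column $j=\lambda_{q_h}$ of each length $q_h$, whose entries in rows $(q_{h-1},q_h]$ are pinned to $\gamma$ and whose entries in rows $i\le q_{h-1}$ satisfy $T_j(i)\le T_{j+1}(i)\le rank^{\,q_{h-1}-i+1}(B_{h-1})$ by induction.

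The genuine gap is that the decisive step --- ``$B_h$ is the entrywise maximum admissible column content, and gaplessness is what makes this work'' --- is asserted rather than proved, and it is not routine. Bounding the top part of column $j$ only by the column to its right is insufficient: with $B_{h-1}=\{5,6\}$, $p_h=2$ and the clump-deleting augmentation $B_h=\{3,4,5,6\}$ one has $rank^1(B_{h-1})=6>4=rank^{3}(B_h)$, so you must also use column-strictness against the pinned bottom entries of the same column, which gives $T_j(q_{h-1}-e+1)\le \gamma_{q_{h-1}+1}-e$. The statement that actually has to be established is $\min\{\,rank^{e}(B_{h-1}),\ \gamma_{q_{h-1}+1}-e\,\}\le rank^{\,e+p_h}(B_h)$ for all $e\in[q_{h-1}]$, and proving it requires the $R$-rightmost-clump-deleting structure of the chain of $\pi$ (Proposition \ref{prop320.2}(\ref{prop320.2.1}) together with Fact \ref{fact320.3}(\ref{fact320.3.3})): if every element of $B_h\setminus B_{h-1}$ is at least $rank^{e}(B_{h-1})$, then at least $e+p_h$ elements of $B_h$ lie above that value and the first term of the minimum suffices; otherwise clump deletion forces an interval $[w,\max(B_{h-1})]\subseteq B_h$ with $w\le\gamma_{q_{h-1}+1}-e$, whence $rank^{\,e+p_h}(B_h)=\gamma_{q_{h-1}+1}-e$ exactly. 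Neither the precise inequality nor this case analysis appears in your write-up; ``this is where the gapless hypothesis does its work'' is precisely the content of the lemma, so as it stands the proposal is a correct outline with its core step missing.
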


A $\lambda$-key $Y$ is \emph{gapless} if the condition below is satisfied for $h \in [r-1]$:  Let $b$ be the smallest value in a column of length $q_{h+1}$ that does not appear in a column of length $q_{h}$.  For $j \in (\lambda_{q_{h +2}}, \lambda_{q_{h+1}}]$, let $i \in (0, q_{h+1}]$ be the shared row index for the occurrences of $b = Y_j(i)$.  Let $m$ be the bottom (largest) value in the columns of length $q_{h}$.  If $b > m$ there are no requirements.  Otherwise:  For $j \in (\lambda_{q_{h +2}}, \lambda_{q_{h+1}}]$, let $k \in (i, q_{h+1}]$ be the shared row index for the occurrences of $m = Y_j(k)$.  For $j \in (\lambda_{q_{h + 2}}, \lambda_{q_{h+1}}]$ one must have $Y_j(i+1) = b+1, Y_j(i+2) = b+2, ... , Y_j(k-1) = m-1$ holding between $Y_j(i) = b$ and $Y_j(k) = m$.  (Hence necessarily $m - b = k - i$.)

\begin{thm}\label{theorem340}Let $\lambda$ be a partition and set $R := R_\lambda$.
\begin{enumerate}[(i)]\setlength\itemsep{-.5em}

\item An $R$-permutation $\pi$ is $R$-312-avoiding if and only if its $\lambda$-key $Y_\lambda(\pi)$ is gapless. \label{theorem340.1}

\item If an $R$-permutation $\pi$ is $R$-312-avoiding, then the $\lambda$-row end max tableau $M_\lambda(\gamma)$ of its rank $R$-tuple $\Psi_R(\pi) =: \gamma$ is its $\lambda$-key $Y_\lambda(\pi)$. \label{theorem340.2} \end{enumerate} \end{thm}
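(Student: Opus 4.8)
The plan is to prove the two parts together, leaning on the combinatorial bijections already established. For part (\ref{theorem340.1}), I would translate everything through the chain of bijections $\pi \mapsto B \mapsto Y$ from $R$-permutations to $R$-chains to $\lambda$-keys. By Proposition \ref{prop320.2}(\ref{prop320.2.1}), $\pi$ being $R$-312-avoiding is equivalent to its $R$-chain $B$ being $R$-rightmost clump deleting. So the real task is to show that $B$ is $R$-rightmost clump deleting if and only if the $\lambda$-key $Y_\lambda(\pi)$ built from $B$ is gapless. Here I would unwind the definition of the $B \mapsto Y$ map: the columns of $Y$ of length $q_{h+1}$ carry the set $B_{h+1}$ as their value set, and the columns of length $q_h$ carry $B_h$. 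Since $B_h \subset B_{h+1}$, the value $b$ in the gapless-key definition (the smallest element of $B_{h+1} \backslash B_h$) is exactly $b_{h+1} := \min(B_{h+1}\backslash B_h)$ from Fact \ref{fact320.3}, and the bottom value $m$ in the length-$q_h$ columns is exactly $m_h := \max(B_h)$. The requirement in the gapless-key definition that $Y_j(i+1)=b+1, \ldots, Y_j(k-1)=m-1$ run consecutively between the row of $b$ and the row of $m$ in a length-$q_{h+1}$ column says precisely that the interval $[b,m] = [b_{h+1}, m_h]$ is contained in $B_{h+1}$ — this is Fact \ref{fact320.3}(\ref{fact320.3.1}). (When $b>m$, i.e. $b_{h+1} > m_h$, both conditions are vacuous.) So parts line up clump-by-carrel, and part (\ref{theorem340.1}) follows.

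For part (\ref{theorem340.2}), I would argue that the $\lambda$-key $Y := Y_\lambda(\pi)$ is a member of $\mathcal{Z}_\lambda(\gamma)$ and then that it is the maximal member. That $Y \in \mathcal{Z}_\lambda(\gamma)$ is already recorded in Section \ref{800}: the $\lambda$-row end list of $Y_\lambda(\pi)$ is the rank $R$-tuple $\Psi_R(\pi) = \gamma$, with $\gamma_i = Y_{\lambda_i}(i)$. It remains to show $Y$ is the unique maximal element of $\mathcal{Z}_\lambda(\gamma)$, i.e. that $Y = M_\lambda(\gamma)$. By Lemma \ref{lemma340.1}, $M_\lambda(\gamma)$ is itself a key (using that $\gamma \in UG_R(n)$ by Proposition \ref{prop320.2}(\ref{prop320.2.2})). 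Two keys of shape $\lambda$ are determined by the value sets of their columns, equivalently by their $R$-chains; and a key's $\lambda$-row end list determines that $R$-chain, because in a key the value set $B(Y_j)$ of the $j$-th column of length $q_h$ is obtained from the row-end entries $\gamma_i = Y_j(i)$ at the rows $i \in (0, q_h]$ where the length-$q_h$ columns sit — more precisely $B(Y_j) = \{\gamma_i : i \le q_h\}$ when $\lambda_i = \lambda_j$, which recovers $B_h$. Since $M_\lambda(\gamma)$ is a key with $\lambda$-row end list $\gamma$, and $Y$ is a key with $\lambda$-row end list $\gamma$, and a key is determined by its $\lambda$-row end list, we get $M_\lambda(\gamma) = Y = Y_\lambda(\pi)$.

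The main obstacle I anticipate is the bookkeeping in part (\ref{theorem340.1}): one must carefully match the row indices $i$ and $k$ from the gapless-key definition (the shared rows of the occurrences of $b$ and of $m$ across the block of length-$q_{h+1}$ columns) against the rank-function description of the entries, and confirm that ``the entries strictly between rows $i$ and $k$ in a length-$q_{h+1}$ column run through $b+1, \ldots, m-1$'' is the same statement as ``$[b_{h+1},m_h]\subseteq B_{h+1}$,'' including the boundary case where some of these values already lay in $B_h$ (so that the clump $L_e$ in Fact \ref{fact320.3} genuinely straddles old and new elements). A secondary subtlety is the reduction in part (\ref{theorem340.2}) that a key of shape $\lambda$ is recovered from its $\lambda$-row end list; this is essentially immediate from the definition of $\lambda$-key via the nested value sets $B(Y_l) \supseteq B(Y_j)$, but it should be stated explicitly since it is what makes the maximality argument collapse to an equality of two keys.
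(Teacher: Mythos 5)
This paper does not actually prove Theorem \ref{theorem340} --- it is imported verbatim from the predecessor \cite{PW3} --- so the only question is whether your argument is correct.  Your plan for part (\ref{theorem340.1}) is sound: under $\pi \mapsto B \mapsto Y$ the columns of length $q_h$ carry the value set $B_h$, so the gapless-key condition for the pair of lengths $q_{h+1} > q_h$ says exactly that every integer between $b_{h+1} = \min(B_{h+1}\backslash B_h)$ and $m_h = \max(B_h)$ lies in $B_{h+1}$, i.e. $[b_{h+1},m_h] \subseteq B_{h+1}$ (the $h=r$ instance of Fact \ref{fact320.3}(\ref{fact320.3.1}) being vacuous since $B_{r+1} = [n]$), and then Proposition \ref{prop320.2}(\ref{prop320.2.1}) finishes.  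Modulo the bookkeeping you acknowledge, that part is fine.

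Part (\ref{theorem340.2}), however, has a genuine gap: the reduction rests on the claim that a $\lambda$-key is determined by its $\lambda$-row end list, and that claim is false.  On the $h^{th}$ carrel the row end list records only the $p_h$ largest elements of $B_h$ (the bottom $p_h$ entries of a column of length $q_h$), not all of $B_h$; your identity $B(Y_j) = \{\gamma_i : i \le q_h\}$ fails because for $i \le q_{h-1}$ the entry $\gamma_i$ is read from a shorter column farther right --- it does lie in $B(Y_j)$ by the nesting property, but repetitions among the $\gamma_i$ make this set strictly smaller than $B_h$ in general.  Concretely, take $n=3$, $\lambda = (2,1,0)$, $\pi = (3,1,2)$ and $\pi' = (3,2,1)$: their keys have first columns $\{1,3\}$ and $\{2,3\}$ respectively and second column $\{3\}$, yet both have $\lambda$-row end list $(3,3,3) = \Psi(\pi) = \Psi(\pi')$, and only the key of the $312$-avoiding $\pi'$ equals $M_\lambda((3,3,3))$.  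Indeed, if your claim were true, the same argument would yield $M_\lambda[\Psi_\lambda(\sigma)] = Y_\lambda(\sigma)$ for every permutation $\sigma$, contradicting the remark following Theorem \ref{theorem340} that in the full case this equality forces $312$-avoidance.  The whole content of part (\ref{theorem340.2}) is to single out, among the several keys sharing the row end list $\gamma$, the one that is maximal in $\mathcal{Z}_\lambda(\gamma)$, and that is precisely where the $R$-$312$-avoidance of $\pi$ (via part (\ref{theorem340.1}), the gapless structure of $Y_\lambda(\pi)$, or the clump-deleting description of its chain) must enter the maximality argument; your proposal uses the hypothesis only to invoke Lemma \ref{lemma340.1}, which by itself cannot distinguish $Y_\lambda(\pi)$ from the other keys with the same row end list.
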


\noindent In the full case, the converse of Part (ii) holds:  If the row end max tableau of the rank tuple of a permutation is the key of the permutation, then the permutation is 312-avoiding. In \cite{PW3} it is further noted that an $R$-tuple is $R$-gapless here if and only if it arises as the $\lambda$-row end list of a gapless $\lambda$-key.

We conclude with material needed from Sections 6 and 7 of \cite{PW3}.  Fix a $\lambda$-permutation $\pi$.  To completely present our definition of the set $\mathcal{D}_\lambda(\pi)$ of Demazure tableaux, we would need to first specify how to find the ``scanning tableau'' $S(T)$ for a given $T \in \mathcal{T}_\lambda$.  Since that one paragraph specification is not explicitly used in this paper, we only note that $S(T) \in \mathcal{T}_\lambda$ and refer curious readers to \cite{PW3}.  It was shown in \cite{Wi1} that $S(T)$ is the ``right key'' of Lascoux and Sch\"{u}tzenberger for $T$.  As in \cite{PW1}, we use the $\lambda$-key $Y_\lambda(\pi)$ of $\pi$ and $S(T)$ to define the set of \emph{Demazure tableaux}:  $\mathcal{D}_\lambda(\pi) := \{ T \in \mathcal{T}_\lambda : S(T) \leq Y_\lambda(\pi) \}$.  We list some basic facts concerning keys, scanning tableaux, and sets of Demazure tableaux.  Part (i) is elementary; the other parts were justified in \cite{PW3}.

\begin{fact}\label{fact420}Let $T \in \mathcal{T}_\lambda$ and let $Y \in \mathcal{T}_\lambda$ be a key. \begin{enumerate}[(i)]\setlength\itemsep{-.5em}

\item If $\Theta(Y) = \Theta(U)$ for some $U \in \mathcal{T}_\lambda$, then $U = Y$. \label{fact420.1}

\item $S(T)$ is a key and hence $S(T) \in \mathcal{T}_\lambda$. \label{fact420.2}

\item $T \leq S(T)$ and $S(Y) = Y$. \label{fact420.3}

\item $Y_\lambda(\pi) \in \mathcal{D}_\lambda(\pi)$ and $\mathcal{D}_\lambda(\pi) \subseteq [Y_\lambda(\pi)]$. \label{fact420.4}

\item The unique maximal element of $\mathcal{D}_\lambda(\pi)$ is $Y_\lambda(\pi)$. \label{fact420.5}

\item The Demazure sets $\mathcal{D}_\lambda(\sigma)$ of tableaux are nonempty subsets of $\mathcal{T}_\lambda$ that are precisely indexed by the $\sigma \in S_n^\lambda$.  \label{fact420.6} \end{enumerate}\end{fact}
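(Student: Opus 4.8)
The plan is to prove (i) from scratch by an elementary double‑counting, to quote the predecessor papers for the two substantive statements (ii) and (iii), and then to read off (iv)--(vi) as formal consequences of (ii)--(iii) and the definition $\mathcal{D}_\lambda(\pi) = \{T \in \mathcal{T}_\lambda : S(T) \leq Y_\lambda(\pi)\}$.

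For (i), I would first use the nesting property $B(Y_l) \supseteq B(Y_j)$ for $l \leq j$ of the key $Y$ to observe that the columns of $Y$ containing a fixed value $v$ form an initial segment $Y_1, \dots, Y_k$, which (one occurrence per column) has length $k = \theta_v$, where $\theta := \Theta(Y)$; hence $v \in B(Y_j)$ exactly when $\theta_v \geq j$, and summing column lengths yields $\sum_{j' \leq j} \zeta_{j'} = \sum_{v} \min(j,\theta_v)$ for each $j$. Then, given any $U \in \mathcal{T}_\lambda$ with $\Theta(U) = \theta$, I would let $c_j(v)$ count the occurrences of $v$ among the first $j$ columns of $U$; semistandardness gives the termwise bound $c_j(v) \leq \min(j,\theta_v)$, while $\sum_v c_j(v)$ is the number of cells in those columns, i.e. $\sum_{j' \leq j} \zeta_{j'} = \sum_v \min(j,\theta_v)$. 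Equality of the two sums forces $c_j(v) = \min(j,\theta_v)$ for every $v$ and $j$, so $v$ lies in column $j$ of $U$ precisely when $\theta_v \geq j$; thus $B(U_j) = B(Y_j)$ for all $j$, and since a one‑column tableau is determined by its value set, $U = Y$.

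For (ii) and (iii) I would cite \cite{Wi1} and \cite{PW3}: there it is established that the scanning tableau $S(T)$ is the right key of Lascoux and Sch\"{u}tzenberger, that a right key is a $\lambda$‑key, that $T \leq S(T)$ entrywise, and that the right key of a key is that key, so $S(Y) = Y$. These are the only ingredients external to this excerpt. Statement (iv) is then immediate: $S(Y_\lambda(\pi)) = Y_\lambda(\pi) \leq Y_\lambda(\pi)$ puts $Y_\lambda(\pi)$ into $\mathcal{D}_\lambda(\pi)$, and any $T \in \mathcal{D}_\lambda(\pi)$ satisfies $T \leq S(T) \leq Y_\lambda(\pi)$, giving $\mathcal{D}_\lambda(\pi) \subseteq [Y_\lambda(\pi)]$. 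Part (v) follows because $Y_\lambda(\pi)$ both lies in $\mathcal{D}_\lambda(\pi)$ and bounds it above. For (vi): each $\mathcal{D}_\lambda(\sigma)$ is a nonempty subset of $\mathcal{T}_\lambda$ by (iv) and the definition, and since $\sigma \mapsto Y_\lambda(\sigma)$ is a bijection from $S_n^\lambda$ onto the $\lambda$‑keys, the maxima $Y_\lambda(\sigma)$ of these sets are pairwise distinct, hence so are the sets.

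I do not expect a genuine obstacle: (ii) and (iii) are black‑boxed from the predecessor, (iv)--(vi) are bookkeeping, and the only place calling for a moment's care is the double‑counting in (i) — specifically verifying the termwise inequality $c_j(v) \leq \min(j,\theta_v)$ and matching it against the identity $\sum_{j' \leq j}\zeta_{j'} = \sum_v \min(j,\theta_v)$, after which the forced equality does all the work.
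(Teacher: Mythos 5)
Your proposal is correct, and it is consistent with what the paper does: the paper offers no in-text argument for this Fact, simply declaring Part (i) elementary and deferring (ii)--(vi) to the predecessor paper \cite{PW3}, whereas you supply the omitted details. Your double-counting proof of (i) is sound --- the nesting property of the key gives $v \in B(Y_j)$ iff $\theta_v \geq j$, the termwise bound $c_j(v) \leq \min(j,\theta_v)$ follows from strictness down columns plus the content, and equality of the column-cell counts forces termwise equality, pinning down every $B(U_j)$ --- and your derivations of (iv)--(vi) from the black-boxed scanning-tableau facts (ii)--(iii), together with the bijection $\sigma \mapsto Y_\lambda(\sigma)$ onto $\lambda$-keys for the injectivity in (vi), are exactly the routine verifications the paper leaves to \cite{PW3}.
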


The main results of \cite{PW3} were:

\begin{thm}\label{theorem520}Let $\lambda$ be a partition and let $\pi$ be a $\lambda$-permutation.  The set $\mathcal{D}_\lambda(\pi)$ of Demazure tableaux of shape $\lambda$ is a convex polytope in $\mathbb{Z}^{|\lambda|}$ if and only if $\pi$ is $\lambda$-312-avoiding if and only if $\mathcal{D}_\lambda(\pi) = [Y_\lambda(\pi)]$.  \end{thm}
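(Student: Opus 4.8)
The plan is to prove the chain of equivalences by establishing the implications $\mathcal{D}_\lambda(\pi) = [Y_\lambda(\pi)] \implies \mathcal{D}_\lambda(\pi)$ is a convex polytope $\implies \pi$ is $\lambda$-312-avoiding $\implies \mathcal{D}_\lambda(\pi) = [Y_\lambda(\pi)]$, closing the loop. The first implication is immediate: by the remark in Section \ref{800}, every principal ideal $[T]$ in $\mathcal{T}_\lambda$ is a convex polytope in $\mathbb{Z}^{|\lambda|}$, so if $\mathcal{D}_\lambda(\pi)$ equals such an ideal it is automatically a convex polytope.

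For the second implication, suppose $\pi$ is $\lambda$-312-\emph{containing}; I want to exhibit tableaux $T', T'' \in \mathcal{D}_\lambda(\pi)$ whose ``average'' (or some explicit convex combination landing in $\mathbb{Z}^{|\lambda|}$, e.g. the midpoint of a segment of even lattice length, or more robustly a lattice point forced to lie in any convex set containing both) fails to lie in $\mathcal{D}_\lambda(\pi)$. The natural candidates are built from the witnessing indices $a \leq q_h < b \leq q_{h+1} < c \leq n$ with $\pi_a > \pi_b < \pi_c$ and $\pi_a > \pi_c$: one uses the columns of length $q_h$ and $q_{h+1}$ in $\lambda$ (these exist precisely because $q_h, q_{h+1} \in R_\lambda$) to construct two tableaux in $[Y_\lambda(\pi)]$ that are Demazure (their scanning tableaux are $\leq Y_\lambda(\pi)$) but such that a tableau between them in $\mathcal{T}_\lambda$ has a scanning tableau exceeding $Y_\lambda(\pi)$ in some column, hence is not Demazure. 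This is where Theorem \ref{theorem340}(\ref{theorem340.1}) enters: $\lambda$-312-containment is equivalent to the $\lambda$-key $Y_\lambda(\pi)$ failing the gapless condition, and the precise failure of gaplessness — an entry $b \leq m$ in a column of length $q_{h+1}$ not appearing in a column of length $q_h$, with the run $b, b+1, \ldots, m$ not filled in — is exactly the local configuration needed to build the convexity-violating triple of tableaux.

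The third implication, $\pi$ $\lambda$-312-avoiding $\implies \mathcal{D}_\lambda(\pi) = [Y_\lambda(\pi)]$, is the substantive containment $[Y_\lambda(\pi)] \subseteq \mathcal{D}_\lambda(\pi)$ (the reverse inclusion is Fact \ref{fact420}(\ref{fact420.4})). Here I would take any $T \leq Y_\lambda(\pi)$ in $\mathcal{T}_\lambda$ and show $S(T) \leq Y_\lambda(\pi)$. By Fact \ref{fact420}(\ref{fact420.2}) the scanning tableau $S(T)$ is a key, so it suffices to compare column sets: one shows $B(S(T)_j) \leq B(Y_\lambda(\pi)_j)$ columnwise (in the sorted-entry order). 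Since $\pi$ is $\lambda$-312-avoiding, Theorem \ref{theorem340}(\ref{theorem340.1}) says $Y_\lambda(\pi)$ is gapless, and the gapless structure — explicitly, that each ``new'' clump of values entering a longer column either sits above the old maximum or is glued onto the old block by a filled-in staircase — is what propagates the bound $T \leq Y_\lambda(\pi)$ through the right-key computation. Concretely one argues column by column from left to right, using the monotonicity of the scanning procedure together with the characterization of gapless $\lambda$-keys via $R$-rightmost clump deleting chains (Proposition \ref{prop320.2}(\ref{prop320.2.1}), Fact \ref{fact320.3}).

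The main obstacle is the third implication, and within it the fact that I cannot cite the internal mechanics of the scanning tableau $S(T)$ since the excerpt deliberately suppresses that one-paragraph definition. I expect the actual argument to route around this by invoking only the listed properties of $S$ in Fact \ref{fact420} together with the already-proved structural results of \cite{PW3} on gapless keys and $R$-rightmost clump deleting chains — in effect, reducing the statement ``$T \leq Y_\lambda(\pi) \implies S(T) \leq Y_\lambda(\pi)$'' to a purely combinatorial statement about how right keys interact with the gapless condition, which \cite{PW3} has presumably already packaged. If no such packaged statement is available, one would need the right-key compatibility directly, and that delicate column-by-column induction on the clump structure of the $B_h$ is where the genuine work lies; the convexity-violation direction, by contrast, should be a matter of writing down one well-chosen pair of tableaux.
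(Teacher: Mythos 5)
This theorem is not actually proved in the paper you were given: it is quoted verbatim as one of the two main results of the predecessor paper \cite{PW3} (note the sentence ``The main results of \cite{PW3} were:'' immediately preceding it), and the machinery its proof needs --- the explicit scanning algorithm for $S(T)$ and a cell-by-cell analysis of scanning values on the principal ideal $[Y_\lambda(\pi)]$ --- is deliberately omitted here. So the comparison is really against \cite{PW3}, and in broad shape your outline does match what is done there: the easy leg ($\mathcal{D}_\lambda(\pi) = [Y_\lambda(\pi)]$ implies convexity, since principal ideals of $\mathcal{T}_\lambda$ are convex polytopes), a contrapositive construction of a convexity violation from a $\lambda$-312 pattern localized via the failure of gaplessness of $Y_\lambda(\pi)$, and the substantive containment $[Y_\lambda(\pi)] \subseteq \mathcal{D}_\lambda(\pi)$ when $\pi$ is $\lambda$-312-avoiding.

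However, as a proof the proposal has genuine gaps. First, your third implication invokes ``the monotonicity of the scanning procedure,'' and this is false: if $S$ were order-preserving, then for every $\pi$ and every $T \leq Y_\lambda(\pi)$ one would get $S(T) \leq S(Y_\lambda(\pi)) = Y_\lambda(\pi)$ by Fact \ref{fact420}(\ref{fact420.3}), hence $\mathcal{D}_\lambda(\pi) = [Y_\lambda(\pi)]$ for all $\pi$, contradicting the $\lambda$-312-containing half of the very theorem you are proving. The failure of monotonicity of $S$ is exactly what makes the theorem nontrivial, and the listed properties in Fact \ref{fact420} are provably insufficient to yield $T \leq Y_\lambda(\pi) \Rightarrow S(T) \leq Y_\lambda(\pi)$; the argument in \cite{PW3} instead controls the scanning values on $[Y_\lambda(\pi)]$ directly using the gapless structure of the key, which cannot be routed around as you hope. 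Second, the second implication is a statement of intent rather than a construction: no pair of tableaux in $\mathcal{D}_\lambda(\pi)$ and no intermediate lattice point outside it is produced, and note that merely exhibiting $T \leq Y_\lambda(\pi)$ with $S(T) \not\leq Y_\lambda(\pi)$ would only show $\mathcal{D}_\lambda(\pi) \neq [Y_\lambda(\pi)]$, which is weaker than non-convexity. Third, the skeleton itself rests on Theorem \ref{theorem340}(\ref{theorem340.1}) and Proposition \ref{prop320.2}(\ref{prop320.2.1}), which are likewise imported from \cite{PW3}, so even granting those, the two substantive implications remain unproved; your own closing caveat correctly identifies where the real work lies, but that work is the proof.
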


\section{Sets of tableaux specified by row bounds}\label{623}

We use the $R$-tuples studied in Section \ref{604} and \ref{608} to develop precise indexing schemes for row bound tableau sets.

Determine the subset $R_\lambda \subseteq [n-1]$ for our fixed partition $\lambda$.  We must temporarily suspend our notation shortcuts regarding `$R_\lambda$'.  Let $\beta$ be an $R_\lambda$-tuple.  We define the \emph{row bound set of tableaux} to be $\mathcal{S}_\lambda(\beta) := \{ T \in \mathcal{T}_\lambda : T_j(i) \leq \beta_i \text{ for } j \in [0, \lambda_1] \text{ and } i \in [\zeta_j] \}$.  The set $\mathcal{S}_\lambda(\beta)$ is non-empty if and only if $\beta$ is upper:  This condition is clearly necessary.  For sufficiency, set $\delta := \Delta_R(\beta)$ and note that $\delta$ is upper.  Then since $\delta \in UI_{R_\lambda}(n)$ we can re-use the $T \in \mathcal{T}_\lambda$ given in Section \ref{800} to see that $\emptyset \neq \mathcal{Z}_\lambda(\delta) \subseteq \mathcal{S}_\lambda(\beta)$.  Henceforth we assume that $\beta$ is upper:  $\beta \in U_{R_\lambda}(n)$.  To interface with the literature for flagged Schur functions, we give special attention to the \emph{flag bound sets} $\mathcal{S}_\lambda(\varphi)$ for upper flags $\varphi \in UF_{R_\lambda}(n)$.  We also want to name the row bound sets $\mathcal{S}_\lambda(\eta)$ for $\eta \in UGC_{R_\lambda}(n)$; we call these the \emph{gapless core bound sets}.

We can focus on the row ends of the tableaux at hand because $\mathcal{S}_\lambda(\beta) = \{ T \in \mathcal{T}_\lambda : T_{\lambda_i}(i) \leq \beta_i \text{ for } i \in [n] \}$.  Let $\alpha \in U_{R_\lambda}(n)$.  In Section \ref{800} we noted that $\mathcal{Z}_\lambda(\alpha) \neq \emptyset$ if and only if $\alpha \in UI_{R_\lambda}(n)$.  These $\mathcal{Z}_\lambda(\alpha)$ are disjoint for distinct $\alpha \in UI_{R_\lambda}(n)$.  Clearly $\mathcal{S}_\lambda(\beta) = \bigcup \mathcal{Z}_\lambda(\alpha)$, taking the union over the $\alpha$ in the subset $\{ \beta \}_{R_\lambda} \subseteq UI_{R_\lambda}(n)$ defined in Section \ref{608}.  This observation and Lemma \ref{lemma608.2} allow us to study the three kinds of row bound sets $\mathcal{S}_\lambda(\beta)$ by considering the principal ideals $[\Delta_{R_\lambda}(\beta)] = \{ \beta \}_{R_\lambda}$ of $UI_{R_\lambda}(n)$ for $\beta \in U_{R_\lambda}(n)$ or $\beta \in UGC_{R_\lambda}(n)$ or $\beta \in UF_{R_\lambda}(n)$.  The results of Sections \ref{604} and \ref{608} can be used to show:

\begin{prop}\label{prop623.1}Let $\beta \in U_{R_\lambda}(n)$.  The row bound set $\mathcal{S}_\lambda(\beta)$ is a flag bound tableau set if and only if $\beta \in UGC_{R_\lambda}(n)$.  For the ``if'' statement use $\mathcal{S}_\lambda(\beta) = \mathcal{S}_\lambda(\varphi)$ for $\varphi := \Phi_{R_\lambda}[\Delta_{R_\lambda}(\beta)]$. \end{prop}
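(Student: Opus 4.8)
The plan is to reduce the whole statement to a single observation: a row bound set $\mathcal{S}_\lambda(\beta)$ depends on $\beta \in U_{R_\lambda}(n)$ only through its $R_\lambda$-core $\Delta_{R_\lambda}(\beta)$, and conversely $\Delta_{R_\lambda}(\beta)$ is recoverable from $\mathcal{S}_\lambda(\beta)$. Indeed, as recorded just above the statement, $\mathcal{S}_\lambda(\beta) = \bigcup_{\alpha \in \{\beta\}_{R_\lambda}} \mathcal{Z}_\lambda(\alpha)$, and by Lemma~\ref{lemma608.2}(\ref{lemma608.2.1}) the index set is the principal ideal $\{\beta\}_{R_\lambda} = [\Delta_{R_\lambda}(\beta)]$ of $UI_{R_\lambda}(n)$. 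Since each $\mathcal{Z}_\lambda(\alpha)$ with $\alpha \in UI_{R_\lambda}(n)$ is nonempty and distinct such $\alpha$ give disjoint $\mathcal{Z}_\lambda(\alpha)$ (both noted in Section~\ref{800}), one may write $\{\beta\}_{R_\lambda} = \{\alpha \in UI_{R_\lambda}(n) : \mathcal{Z}_\lambda(\alpha) \subseteq \mathcal{S}_\lambda(\beta)\}$; hence for $\beta, \beta' \in U_{R_\lambda}(n)$ one has $\mathcal{S}_\lambda(\beta) = \mathcal{S}_\lambda(\beta')$ if and only if $\Delta_{R_\lambda}(\beta) = \Delta_{R_\lambda}(\beta')$. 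I will call this the \emph{core principle}; once it is in hand, both directions are short.

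For the ``if'' direction, I would assume $\beta \in UGC_{R_\lambda}(n)$ and set $\gamma := \Delta_{R_\lambda}(\beta)$. By Proposition~\ref{prop604.4}(\ref{prop604.4.2}) we have $\gamma \in UG_{R_\lambda}(n)$, so $\varphi := \Phi_{R_\lambda}(\gamma) = \Phi_{R_\lambda}[\Delta_{R_\lambda}(\beta)]$ is an $R_\lambda$-floor flag, in particular an upper flag, and $\mathcal{S}_\lambda(\varphi)$ is a flag bound set. By Proposition~\ref{prop608.10}, $\Phi_{R_\lambda}$ is a bijection onto $UFlr_{R_\lambda}(n)$ with inverse $\Delta_{R_\lambda}$, so $\Delta_{R_\lambda}(\varphi) = \gamma = \Delta_{R_\lambda}(\beta)$; the core principle then gives $\mathcal{S}_\lambda(\beta) = \mathcal{S}_\lambda(\varphi)$, which is the asserted identity.

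For the ``only if'' direction, I would suppose $\mathcal{S}_\lambda(\beta) = \mathcal{S}_\lambda(\varphi)$ for some upper flag $\varphi \in UF_{R_\lambda}(n)$. The core principle gives $\Delta_{R_\lambda}(\beta) = \Delta_{R_\lambda}(\varphi)$. Since $UF_{R_\lambda}(n) \subseteq UGC_{R_\lambda}(n)$, Proposition~\ref{prop604.4}(\ref{prop604.4.1}) yields $\Delta_{R_\lambda}(\varphi) \in UG_{R_\lambda}(n)$, hence $\Delta_{R_\lambda}(\beta) \in UG_{R_\lambda}(n)$, and then Proposition~\ref{prop604.4}(\ref{prop604.4.2}) gives $\beta \in UGC_{R_\lambda}(n)$.

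I expect no serious obstacle: the only point needing care is the converse half of the core principle, namely that the index set $\{\beta\}_{R_\lambda}$ of the disjoint union is genuinely determined by the set $\mathcal{S}_\lambda(\beta)$ itself. That is exactly where nonemptiness of the $\mathcal{Z}_\lambda(\alpha)$ on $UI_{R_\lambda}(n)$ and their pairwise disjointness are used; with those, everything else is a direct bookkeeping application of the structure theory of $\Delta_{R_\lambda}$, $\Phi_{R_\lambda}$, and $UGC_{R_\lambda}(n)$ developed in Sections~\ref{604} and \ref{608}.
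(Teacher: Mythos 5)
Your proposal is correct and follows essentially the route the paper intends: the paper gives no separate proof but points to the $\mathcal{S}_\lambda(\beta) = \bigcup \mathcal{Z}_\lambda(\alpha)$ decomposition over $\{\beta\}_{R_\lambda} = [\Delta_{R_\lambda}(\beta)]$ (your ``core principle'', via Lemma~\ref{lemma608.2}(\ref{lemma608.2.1}) and the nonemptiness/disjointness of the $\mathcal{Z}_\lambda(\alpha)$), together with Proposition~\ref{prop604.4} and Proposition~\ref{prop608.10}, exactly as you use them, and your choice $\varphi := \Phi_{R_\lambda}[\Delta_{R_\lambda}(\beta)]$ matches the hint in the statement.
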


\noindent At times for indexing reasons we will prefer the ``gapless core'' viewpoint.

When $\lambda$ is not strict, it is possible to have $\mathcal{S}_\lambda(\beta) = \mathcal{S}_\lambda(\beta^\prime)$ for distinct $\beta, \beta^\prime \in U_{R_\lambda}(n)$.  We want to study how much such labelling ambiguity is present for $\mathcal{S}_\lambda(.)$, and we want to develop unique labelling systems for the tableau sets $\mathcal{S}_\lambda(\beta), \mathcal{S}_\lambda(\eta)$, and $\mathcal{S}_\lambda(\varphi)$.  For $\beta, \beta^\prime \in U_{R_\lambda}(n)$, define $\beta \approx_{\lambda} \beta^\prime$ if $\mathcal{S}_\lambda(\beta) = \mathcal{S}_\lambda(\beta^\prime)$.  Sometimes we restrict $\approx_\lambda$ to $UGC_{R_\lambda}(n)$ or further to $UF_{R_\lambda}(n)$.  We denote the equivalence class of $\beta \in U_{R_\lambda}(n), \eta \in UGC_{R_\lambda}(n)$, and $\varphi \in UF_{R_\lambda}(n)$ by $\langle \beta \rangle_\lambda, \langle \eta \rangle_\lambda^G$, and $\langle \varphi \rangle_\lambda^F$.  By the $\mathcal{S}_\lambda(\beta) = \bigcup \mathcal{Z}_\lambda(\alpha)$ observation and the fact that the $\mathcal{Z}_\lambda(\alpha)$ are non-empty and disjoint, it can be seen that this relation $\approx_\lambda$ on $U_{R_\lambda}(n)$ or $UGC_{R_\lambda}(n)$ or $UF_{R_\lambda}(n)$ is the same as the relation $\sim_{R_\lambda}$ defined on these sets in Section \ref{608}:  Each relation can be expressed in terms of principal ideals of $UI_{R_\lambda}(n)$.

\begin{prop}\label{prop623.2}On the sets $U_{R_\lambda}(n), UGC_{R_\lambda}(n)$, and $UF_{R_\lambda}(n)$, the relation $\approx_\lambda$ coincides with the relation $\sim_{R_\lambda}$.  \end{prop}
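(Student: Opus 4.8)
The plan is to exhibit a mutual determination between the tableau set $\mathcal{S}_\lambda(\beta)$ and the subset $\{\beta\}_{R_\lambda} = \{\alpha \in UI_{R_\lambda}(n) : \alpha \le \beta\}$ of $UI_{R_\lambda}(n)$, so that $\mathcal{S}_\lambda(\beta) = \mathcal{S}_\lambda(\beta')$ holds exactly when $\{\beta\}_{R_\lambda} = \{\beta'\}_{R_\lambda}$. Since on each of $U_{R_\lambda}(n)$, $UGC_{R_\lambda}(n)$, and $UF_{R_\lambda}(n)$ the relation $\approx_\lambda$ is by definition ``$\mathcal{S}_\lambda(\cdot)$-equality'' while $\sim_{R_\lambda}$ is ``$\{\cdot\}_{R_\lambda}$-equality'', this mutual determination immediately yields the asserted coincidence on all three sets at once; nothing extra is needed to pass from $U_{R_\lambda}(n)$ to its subsets, as both relations are simply being restricted.

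First I would record the forward direction, which is already on hand: by the observation preceding the proposition, $\mathcal{S}_\lambda(\beta) = \bigcup \mathcal{Z}_\lambda(\alpha)$ with the union over $\alpha \in \{\beta\}_{R_\lambda}$, so $\{\beta\}_{R_\lambda}$ determines $\mathcal{S}_\lambda(\beta)$. For the reverse direction I would recover $\{\beta\}_{R_\lambda}$ from $\mathcal{S}_\lambda(\beta)$ as the set of $\lambda$-row end lists of its members. Concretely, using the row-end form $\mathcal{S}_\lambda(\beta) = \{T \in \mathcal{T}_\lambda : T_{\lambda_i}(i) \le \beta_i \text{ for } i \in [n]\}$, for $\alpha \in UI_{R_\lambda}(n)$ one has $\mathcal{Z}_\lambda(\alpha) \subseteq \mathcal{S}_\lambda(\beta)$ precisely when $\alpha \le \beta$, i.e.\ precisely when $\alpha \in \{\beta\}_{R_\lambda}$; and since each $\mathcal{Z}_\lambda(\alpha)$ is nonempty (Section \ref{800}) while the $\mathcal{Z}_\lambda(\alpha)$ are pairwise disjoint for distinct $\alpha \in UI_{R_\lambda}(n)$, this containment is equivalent to $\mathcal{Z}_\lambda(\alpha) \cap \mathcal{S}_\lambda(\beta) \ne \emptyset$. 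Thus $\{\beta\}_{R_\lambda}$ is exactly the image of $\mathcal{S}_\lambda(\beta)$ under the $\lambda$-row end list map, hence is determined by $\mathcal{S}_\lambda(\beta)$.

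Putting the two directions together gives $\mathcal{S}_\lambda(\beta) = \mathcal{S}_\lambda(\beta') \iff \{\beta\}_{R_\lambda} = \{\beta'\}_{R_\lambda}$, i.e.\ $\beta \approx_\lambda \beta' \iff \beta \sim_{R_\lambda} \beta'$, and restricting to $UGC_{R_\lambda}(n)$ or $UF_{R_\lambda}(n)$ leaves this argument untouched. I do not expect a genuine obstacle here; the only point demanding care is the bookkeeping that makes the bridge bidirectional — one must simultaneously use the nonemptiness of the $\mathcal{Z}_\lambda(\alpha)$ (so that membership in $\mathcal{S}_\lambda(\beta)$ detects $\alpha$), their pairwise disjointness (so distinct $\alpha$ are never conflated), and the reduction of the full box-by-box bound condition to the row-end condition. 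In particular no appeal to Lemma \ref{lemma608.2}, nor to $\{\beta\}_{R_\lambda}$ being a principal ideal, is required for this proposition; those enter only in the subsequent description of the equivalence classes.
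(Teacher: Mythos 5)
Your argument is correct and is essentially the paper's own: the paper proves this proposition via the same observation that $\mathcal{S}_\lambda(\beta)=\bigcup\mathcal{Z}_\lambda(\alpha)$ over $\alpha\in\{\beta\}_{R_\lambda}$ together with the nonemptiness and pairwise disjointness of the $\mathcal{Z}_\lambda(\alpha)$, so that each of $\approx_\lambda$ and $\sim_{R_\lambda}$ is equality of the sets $\{\cdot\}_{R_\lambda}$. Your explicit recovery of $\{\beta\}_{R_\lambda}$ as the set of $\lambda$-row end lists of members of $\mathcal{S}_\lambda(\beta)$ just spells out the same bookkeeping.
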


\noindent Now that we know that these relations coincide, we can safely return to replacing `$R_\lambda$' with `$\lambda$' in subscripts and prefixes.  Definitions and results from Sections \ref{604} and \ref{608} will be used by always taking $R := R_\lambda$.  In particular, the unique labelling systems listed in Corollary \ref{cor608.6} for the equivalence classes of $\sim_{\lambda}$ can now be used for the equivalence classes of $\approx_\lambda$.  Henceforth we more simply write `$\sim$' for `$\approx_\lambda$'.

We state some applications of the work in Sections \ref{604} and \ref{608} to the current context:

\begin{prop}\label{prop623.4}Below we take $\beta, \beta^\prime \in U_\lambda(n)$ and $\eta, \eta' \in UGC_\lambda(n)$ and $\varphi, \varphi^\prime \in UF_\lambda(n)$:
\begin{enumerate}[(i)]\setlength\itemsep{-.5em}

\item The row bound sets $\mathcal{S}_\lambda(\beta)$ are precisely indexed by the $\lambda$-increasing upper tuples $\alpha \in UI_\lambda(n)$, which are the minimal representatives in $U_\lambda(n)$ for the equivalence classes $\langle \beta \rangle_\lambda$.  One has $\mathcal{S}_\lambda(\beta) = \mathcal{S}_\lambda(\beta^\prime)$ if and only if $\beta \sim \beta^\prime$ if and only if $\Delta_\lambda(\beta) = \Delta_\lambda(\beta^\prime)$. \label{prop623.4.1}

\item The gapless core bound sets $\mathcal{S}_\lambda(\eta)$ are precisely indexed by the gapless $\lambda$-tuples $\gamma \in UG_\lambda(n)$, which are the minimal representatives in $UGC_\lambda(n)$ for the equivalence classes $\langle \eta \rangle_\lambda^G = \langle \eta \rangle_\lambda$.  One has $\mathcal{S}_\lambda(\eta) = \mathcal{S}_\lambda(\eta^\prime)$ if and only if $\eta \sim \eta^\prime$ if and only if $\Delta_\lambda(\eta) = \Delta_\lambda(\eta^\prime)$. \label{prop623.4.2}

\item The flag bound sets $\mathcal{S}_\lambda(\varphi)$ are precisely indexed by the $\lambda$-floor flags $\tau \in UFlr_\lambda(n)$, which are the minimal representatives in $UF_\lambda(n)$ for the equivalence classes $\langle \varphi \rangle_\lambda^F$.  One has $\mathcal{S}_\lambda(\varphi) = \mathcal{S}_\lambda(\varphi^\prime)$ if and only if $\varphi \sim \varphi^\prime$ if and only if  $\Phi_\lambda[\Delta_\lambda(\varphi)] = \Phi_\lambda[\Delta_\lambda(\varphi^\prime)]$.  The flag bound sets $\mathcal{S}_\lambda(\varphi)$ can also be faithfully depicted as the sets $\mathcal{S}_\lambda(\gamma)$ for $\gamma \in UG_\lambda(n)$ by taking $\gamma := \Delta_\lambda(\varphi)$. \label{prop623.4.3} \end{enumerate} \end{prop}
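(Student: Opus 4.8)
The plan is to deduce Proposition \ref{prop623.4} as a near-immediate corollary of the structural results assembled in Sections \ref{604} and \ref{608}, using the bridge established in Proposition \ref{prop623.2}. The whole point of that proposition is that the tableau-level equivalence $\approx_\lambda$ coincides with the combinatorial equivalence $\sim_{R_\lambda}$ on each of the three nested sets $U_\lambda(n) \supseteq UGC_\lambda(n) \supseteq UF_\lambda(n)$ (after setting $R := R_\lambda$ and resuming the notational shortcut). So the strategy is: translate each clause of the proposition into a statement about $\sim$, then cite the corresponding clause of Corollary \ref{cor608.6}, Proposition \ref{prop608.4}, or Lemma \ref{lemma608.2}.

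First I would establish the chain of equivalences $\mathcal{S}_\lambda(\beta) = \mathcal{S}_\lambda(\beta') \iff \beta \sim \beta' \iff \Delta_\lambda(\beta) = \Delta_\lambda(\beta')$. The first equivalence is just the definition of $\approx_\lambda$ together with Proposition \ref{prop623.2}. The second equivalence is Lemma \ref{lemma608.2}(\ref{lemma608.2.1}): $\upsilon \sim_R \upsilon'$ iff $\{\upsilon\}_R = \{\upsilon'\}_R$ iff $[\Delta_R(\upsilon)] = [\Delta_R(\upsilon')]$ in $UI_R(n)$, and since $\Delta_R(\upsilon), \Delta_R(\upsilon') \in UI_R(n)$ themselves, two principal ideals coincide iff their generators do. This handles the ``if and only if'' clauses in (i) and (ii). For (iii), I would similarly chain through to $\Phi_\lambda[\Delta_\lambda(\varphi)] = \Phi_\lambda[\Delta_\lambda(\varphi')]$, invoking that $\Phi_\lambda$ is a bijection with inverse $\Delta_\lambda$ (Proposition \ref{prop608.10}), so that equality of the $\lambda$-floor flags is equivalent to equality of the $\lambda$-cores.

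Next I would address the ``precisely indexed by'' and ``minimal representatives'' claims. For (i), the row bound sets $\mathcal{S}_\lambda(\beta)$ for $\beta$ ranging over a system of representatives biject with the equivalence classes of $\sim$ in $U_\lambda(n)$; by Corollary \ref{cor608.6}(\ref{cor608.6.1}) these classes are precisely indexed by the $\lambda$-increasing upper tuples. That the $\lambda$-increasing tuple in a class $\langle \beta \rangle_\lambda$ is its minimum is Proposition \ref{prop608.4}(\ref{prop608.4.1}): the class is the interval $[\utilde{\beta}, \tilde{\beta}]$ and $\utilde{\beta} = \Delta_\lambda(\beta)$, which is precisely the $\lambda$-increasing upper tuple for the $\lambda$-critical list. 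For (ii), Corollary \ref{cor608.6}(\ref{cor608.6.2}) gives that the classes in $UGC_\lambda(n)$ are precisely indexed by gapless $\lambda$-tuples, and Proposition \ref{prop608.4}(\ref{prop608.4.2}) says the class in $UGC_\lambda(n)$ equals the class in $U_\lambda(n)$ and has minimum $\utilde{\eta} = \text{\d{$\eta$}}$, the gapless $\lambda$-tuple for the flag $\lambda$-critical list. For (iii), Corollary \ref{cor608.6}(\ref{cor608.6.3}) and Proposition \ref{prop608.4}(\ref{prop608.4.3}) give the $\lambda$-floor flag as the minimum of $\langle \varphi \rangle_\lambda^F$ inside $UF_\lambda(n)$. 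The final sentence of (iii) — that flag bound sets are also depicted as $\mathcal{S}_\lambda(\gamma)$ for $\gamma := \Delta_\lambda(\varphi) \in UG_\lambda(n)$ — follows from Proposition \ref{prop623.1}: since $\varphi \in UF_\lambda(n) \subseteq UGC_\lambda(n)$, its $\lambda$-core is a gapless $\lambda$-tuple (Proposition \ref{prop604.4}(\ref{prop604.4.1})) in the same $\sim$-class, hence $\mathcal{S}_\lambda(\varphi) = \mathcal{S}_\lambda(\Delta_\lambda(\varphi))$.

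I do not anticipate a genuine obstacle here; this proposition is bookkeeping that repackages the $R$-theoretic Corollary \ref{cor608.6} and Proposition \ref{prop608.4} into tableau-set language. The one point requiring care — and the ``main obstacle'' in the weak sense that it is where an error would creep in — is making sure the reduction to $UI_\lambda(n)$ is clean: one must verify that distinct $\lambda$-increasing upper tuples $\alpha$ give genuinely distinct row bound sets, which follows because the $\mathcal{Z}_\lambda(\alpha)$ are non-empty and pairwise disjoint (noted in Section \ref{623}) and $\mathcal{S}_\lambda(\beta) = \bigcup_{\alpha \in \{\beta\}_\lambda} \mathcal{Z}_\lambda(\alpha)$, so that $\mathcal{S}_\lambda(\beta)$ determines $\{\beta\}_\lambda$ and hence $\Delta_\lambda(\beta)$. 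Once that disjointness-and-covering fact is in hand, each of the three parts is a one-line citation, and I would write the proof as essentially ``combine Proposition \ref{prop623.2} with Corollary \ref{cor608.6} and Proposition \ref{prop608.4} (and Proposition \ref{prop608.10} for part (iii)).''
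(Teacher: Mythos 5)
Your proposal is correct and follows exactly the route the paper intends: the paper states this proposition without a separate proof, presenting it as an application of Proposition \ref{prop623.2} together with the $R$-theoretic results of Sections \ref{604} and \ref{608} (Lemma \ref{lemma608.2}, Proposition \ref{prop608.4}, Corollary \ref{cor608.6}, Proposition \ref{prop608.10}), which is precisely the bookkeeping you carry out, including the key disjointness-and-covering observation for the sets $\mathcal{Z}_\lambda(\alpha)$ already noted in Section \ref{623}.
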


Let $\beta \in U_\lambda(n)$.  Following Theorem 23 of \cite{RS}, we define the \emph{$\lambda$-row bound max tableau} $Q_\lambda(\beta)$ to be the least upper bound in $\mathcal{T}_\lambda$ of the tableaux in $\mathcal{S}_\lambda(\beta)$.  It can be seen that $Q_\lambda(\beta) \in  \mathcal{S}_\lambda(\beta)$.

\begin{prop}\label{prop623.8}Let $\beta, \beta^\prime \in U_\lambda(n)$ and set $\Delta_\lambda(\beta) =: \delta \in UI_\lambda(n)$. \begin{enumerate}[(i)]\setlength\itemsep{-.5em}

\item Here $\mathcal{S}_\lambda(\beta) = [Q_\lambda(\beta)]$ and so $\mathcal{S}_\lambda(\beta) = \mathcal{S}_\lambda(\beta^\prime)$ if and only if $Q_\lambda(\beta) = Q_\lambda(\beta^\prime)$. \label{prop623.8.1}

\item Here $M_\lambda(\delta) = Q_\lambda(\beta)$ and so $\mathcal{S}_\lambda(\beta) = [M_\lambda(\delta)]$. \label{prop623.8.2} \end{enumerate} \end{prop}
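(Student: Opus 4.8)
$\textbf{Proof proposal for Proposition \ref{prop623.8}.}$ The plan is to establish Part (i) first by a general lattice-theoretic argument, and then to reduce Part (ii) to a comparison of two tableaux that sit atop principal ideals. For Part (i), I would begin by recalling from the discussion preceding Proposition \ref{prop623.1} that $\mathcal{S}_\lambda(\beta) = \{T \in \mathcal{T}_\lambda : T_{\lambda_i}(i) \leq \beta_i \text{ for } i \in [n]\}$, so membership is controlled entirely by the row-end entries. Since $\mathcal{T}_\lambda = L(\lambda,n)$ is a distributive lattice under entrywise comparison with join given by entrywise max, and since the defining inequalities $T_{\lambda_i}(i) \leq \beta_i$ are preserved under entrywise max, the set $\mathcal{S}_\lambda(\beta)$ is a (nonempty, by the standing assumption $\beta \in U_\lambda(n)$) join-closed subset of $\mathcal{T}_\lambda$. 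A finite nonempty join-closed subset of a finite lattice has a unique maximal element, namely the join of all its members; by definition this is $Q_\lambda(\beta)$, and it lies in $\mathcal{S}_\lambda(\beta)$. To upgrade ``$Q_\lambda(\beta)$ is the maximum'' to ``$\mathcal{S}_\lambda(\beta) = [Q_\lambda(\beta)]$'' I need the reverse inclusion: every $T \leq Q_\lambda(\beta)$ in $\mathcal{T}_\lambda$ actually satisfies the row-bound inequalities. This follows because if $T \leq Q_\lambda(\beta)$ then $T_{\lambda_i}(i) \leq Q_\lambda(\beta)_{\lambda_i}(i) \leq \beta_i$ for each $i$, using that $Q_\lambda(\beta) \in \mathcal{S}_\lambda(\beta)$. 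Hence $\mathcal{S}_\lambda(\beta)$ is the principal ideal $[Q_\lambda(\beta)]$, and two principal ideals coincide exactly when their generators do, giving the stated equivalence.

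For Part (ii), the goal is to identify $Q_\lambda(\beta)$ with $M_\lambda(\delta)$ where $\delta = \Delta_\lambda(\beta) \in UI_\lambda(n)$. I would argue via the row-end-list decomposition established in Section \ref{623}: $\mathcal{S}_\lambda(\beta) = \bigcup_{\alpha \in \{\beta\}_\lambda} \mathcal{Z}_\lambda(\alpha)$, and by Lemma \ref{lemma608.2}(\ref{lemma608.2.1}) we have $\{\beta\}_\lambda = [\delta]$ as a principal ideal in $UI_\lambda(n)$. Since $\delta$ is the top element of this ideal, any $T \in \mathcal{S}_\lambda(\beta)$ has $\lambda$-row end list $\alpha \leq \delta$, so $T_{\lambda_i}(i) = \alpha_i \leq \delta_i$; thus $\mathcal{S}_\lambda(\beta) = \mathcal{S}_\lambda(\delta)$. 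Now $\delta \in UI_\lambda(n)$, so $\mathcal{Z}_\lambda(\delta)$ is nonempty and has a maximum, which by definition is $M_\lambda(\delta)$. It remains to check that $M_\lambda(\delta)$ is actually the maximum of the whole union $\mathcal{S}_\lambda(\delta) = \bigcup_{\alpha \leq \delta} \mathcal{Z}_\lambda(\alpha)$, not just of its top stratum $\mathcal{Z}_\lambda(\delta)$. For this I would show $M_\lambda(\delta)$ dominates every $T \in \mathcal{Z}_\lambda(\alpha)$ with $\alpha \leq \delta$: given such a $T$, the entrywise join $T \vee M_\lambda(\delta)$ is a tableau in $\mathcal{T}_\lambda$ whose row-end list is $\alpha \vee \delta = \delta$ (since $\alpha \leq \delta$), so $T \vee M_\lambda(\delta) \in \mathcal{Z}_\lambda(\delta)$, whence $T \vee M_\lambda(\delta) \leq M_\lambda(\delta)$ by maximality of $M_\lambda(\delta)$ in $\mathcal{Z}_\lambda(\delta)$, forcing $T \leq M_\lambda(\delta)$. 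Therefore $M_\lambda(\delta)$ is the maximum of $\mathcal{S}_\lambda(\beta)$, i.e. $M_\lambda(\delta) = Q_\lambda(\beta)$, and combining with Part (i) gives $\mathcal{S}_\lambda(\beta) = [M_\lambda(\delta)]$.

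The step I expect to be the main obstacle is the verification in Part (ii) that the row-end list of the tableau-join $T \vee M_\lambda(\delta)$ is exactly $\alpha \vee \delta$. A priori, taking the entrywise max of two semistandard tableaux and then reading off the entry in box $(\lambda_i, i)$ should give $\max(T_{\lambda_i}(i), M_\lambda(\delta)_{\lambda_i}(i)) = \max(\alpha_i, \delta_i)$, which is what I want, but I should confirm that box $(\lambda_i, i)$ is genuinely the rightmost box in row $i$ for $both$ tableaux — which it is, since both have shape $\lambda$ — so the row-end list of the join is the entrywise join of the row-end lists. This is essentially bookkeeping once the shape is fixed, but it is the one place where the distinct-shapes subtlety could in principle intrude; here it does not, because $\mathcal{Z}_\lambda$ and $M_\lambda$ are defined for a single fixed $\lambda$. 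A secondary minor point is confirming $Q_\lambda(\beta) \in \mathcal{S}_\lambda(\beta)$, which is exactly the join-closure observation from Part (i), so the two parts reinforce each other and can be organized so that the join-closure lemma is proved once and cited twice.
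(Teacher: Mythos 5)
Your proposal is correct and takes essentially the same route as the paper: it uses the same decomposition $\mathcal{S}_\lambda(\beta)=\bigcup\mathcal{Z}_\lambda(\alpha)$ over $\{\beta\}_\lambda=[\delta]$ via Lemma \ref{lemma608.2}(\ref{lemma608.2.1}), and your join argument showing $M_\lambda(\delta)$ dominates every lower stratum is just the mirror image of the paper's observation that the maximum $Q_\lambda(\beta)$ must itself lie in $\mathcal{Z}_\lambda(\delta)$, after which both are identified as the maximum of that stratum. The only difference is that you spell out Part (i) and the join-closedness of $\mathcal{S}_\lambda(\beta)$, which the paper treats as evident.
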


\noindent So if one wants to view $\mathcal{S}_\lambda(\beta)$ as a principal ideal, the $\lambda$-row end list of its generator $Q_\lambda(\beta)$ is $\Delta_\lambda(\beta)$.  Here the critical entries of $\beta$ are exactly those that agree with the corresponding entries of $\Delta_\lambda(\beta)$; the other entries of $\beta$ have been replaced with the smallest possible entries.

\begin{proof}Only the first claim in (ii) is not already evident:  Recall that $\mathcal{S}_\lambda(\beta) = \bigcup \mathcal{Z}_\lambda(\alpha)$, union over $\alpha \in \{ \beta \}_\lambda \subseteq UI_\lambda(n)$.  By Proposition \ref{prop623.2} and Lemma \ref{lemma608.2}(\ref{lemma608.2.1}) we have $\{ \beta \}_\lambda = [\delta]$.  So $\mathcal{S}_\lambda(\beta) = \bigcup \mathcal{Z}_\lambda(\alpha) = \mathcal{S}_\lambda(\delta)$, union over $\alpha \leq \delta$ in $UI_\lambda(n)$.  Here $\delta \in UI_\lambda(n)$ implies $\mathcal{Z}_\lambda(\delta) \neq \emptyset$.  Let $U \in \mathcal{Z}_\lambda(\delta)$ and $T \in \mathcal{Z}_\lambda(\alpha)$ for some $\alpha \in UI_\lambda(n)$.  Here $\alpha < \delta$ would imply $T \ngtr U$.  Hence $Q_\lambda(\beta) \in \mathcal{Z}_\lambda(\delta)$.  So both $Q_\lambda(\beta)$ and $M_\lambda(\delta)$ are the maximum tableau of $\mathcal{Z}_\lambda(\delta)$.  \end{proof}

\section{Coincidences of row or flag bound sets with \\ Demazure tableau sets}\label{721}

When can one set of tableaux arise both as a row bound set $\mathcal{S}_\lambda(\beta)$ for some upper $\lambda$-tuple $\beta$ and as a Demazure set $\mathcal{D}_\lambda(\pi)$ for some $\lambda$-permutation $\pi$?  Since we will seek coincidences between flag Schur polynomials $s_\lambda(\varphi; x)$ and Demazure polynomials $d_\lambda(\pi; x)$, we should also pose this question for the flag bound set $\mathcal{S}_\lambda(\varphi)$ for some upper flag $\varphi$.  Our deepest result of \cite{PW3} gave a necessary condition for a Demazure tableau set to be convex.  Initially we refer to it here for guiding motivation.  Then we use it to prove the hardest part, Part (iii) for necessity, of the theorem below.  Our next deepest result of \cite{PW3} implied a sufficient condition for a Demazure tableau set to be convex.  We use it here to prove Part (ii), for sufficiency, of the theorem below.

For motivation, note that any set $\mathcal{S}_\lambda(\beta)$ is convex in $\mathbb{Z}^{|\lambda|}$: Proposition \ref{prop623.8}(\ref{prop623.8.1}) says that it is the principal ideal $[Q_\lambda(\beta)]$ of $\mathcal{T}_\lambda$, where $Q_\lambda(\beta)$ is the $\lambda$-row bound max tableau for $\beta$.  And Theorem \ref{theorem520} says that $\mathcal{D}_\lambda(\pi)$ is convex only if the $\lambda$-permutation $\pi$ is $\lambda$-312-avoiding.  So, to begin the proof of Part (ii) below, fix $\pi \in S_n^{\lambda\text{-}312}$.  Find the key $Y_\lambda(\pi)$ of $\pi$.  Theorem \ref{theorem520} says that $\mathcal{D}_\lambda(\pi) = [Y_\lambda(\pi)]$.  (Since $[Y_\lambda(\pi)]$ is convex, we now know that the sets $\mathcal{D}_\lambda(\pi)$ for such $\pi$ are exactly the convex candidates to arise in the form $\mathcal{S}_\lambda(\beta)$.)  Form the rank $\lambda$-tuple $\Psi_\lambda(\pi) =: \gamma$ of $\pi$.  By Proposition \ref{prop320.2}(\ref{prop320.2.2}) we know that $\gamma$ is a gapless $\lambda$-tuple:  $\gamma \in UG_\lambda(n)$.  Theorem \ref{theorem340}(\ref{theorem340.2}) says that $Y_\lambda(\pi) = M_\lambda(\gamma)$, the $\lambda$-row end max tableau for $\gamma$.  Proposition \ref{prop623.8}(\ref{prop623.8.2}) gives $M_\lambda(\gamma) = Q_\lambda(\gamma)$, since $UG_\lambda(n) \subseteq UI_\lambda(n)$ by definition and $\Delta_\lambda(\gamma) = \gamma$ by Fact \ref{fact604.2}.  So $\mathcal{D}_\lambda(\pi) = [Q_\lambda(\gamma)]$.  Hence by Proposition \ref{prop623.8}(\ref{prop623.8.1}) it arises as $\mathcal{S}_\lambda(\gamma) = [Q_\lambda(\gamma)]$.

Parts (i) and (ii) of the following theorem give sufficient conditions for a coincidence from two perspectives, Part (iii) gives necessary conditions for a coincidence, and Part (iv) presents a neutral precise indexing.  But the theorem statement begins with a less technical summary:

\begin{thm}\label{theorem721}Let $\lambda$ be a partition.  A row bound set $\mathcal{S}_\lambda(\beta)$ of tableaux for an upper $\lambda$-tuple $\beta$ arises as a Demazure set if and only if the $\lambda$-core $\Delta_\lambda(\beta)$ of $\beta$ is a gapless $\lambda$-tuple.  Therefore every flag bound set of tableaux arises as a Demazure set, and a row bound set arises as a Demazure set if and only if it arises as a flag bound set.  A Demazure set $\mathcal{D}_\lambda(\pi)$ of tableaux for a $\lambda$-permutation $\pi$ arises as a row bound set if and only if $\pi$ is $\lambda$-312-avoiding.  Specifically:
\begin{enumerate}[(i)]\setlength\itemsep{-.5em}

\item Let $\beta \in U_\lambda(n)$.  If $\beta \in UGC_\lambda(n)$, set $\pi := \Pi_\lambda[\Delta_\lambda(\beta)]$. Then $\mathcal{S}_\lambda(\beta) = \mathcal{D}_\lambda(\pi)$, and $\pi$ is the unique $\lambda$-permutation for which this is true. Here $\pi \in S_n^{\lambda\text{-}312}$. \label{theorem721.1}

\item Let $\pi \in S_n^\lambda$.  If $\pi \in S_n^{\lambda\text{-}312}$, set $\gamma := \Psi_\lambda(\pi)$. Then $\mathcal{D}_\lambda(\pi) = \mathcal{S}_\lambda(\gamma)$, and $\mathcal{D}_\lambda(\pi) = \mathcal{S}_\lambda(\beta)$ for some $\beta \in U_\lambda(n)$ implies $\Delta_\lambda(\beta) = \gamma$. Here $\gamma \in UG_\lambda(n)$ and so $\beta \in UGC_\lambda(n)$. \label{theorem721.2}

\item Suppose some $\beta \in U_\lambda(n)$ and some $\pi \in S_n^\lambda$ exist such that $\mathcal{S}_\lambda(\beta) = \mathcal{D}_\lambda(\pi)$. Then one has $Q_\lambda(\beta) = Y_\lambda(\pi)$ and $\Delta_\lambda(\beta) = \Psi_\lambda(\pi)$.  Here $\beta \in UGC_\lambda(n)$ and $\pi \in S_n^{\lambda\text{-}312}$. \label{theorem721.3}

\item The collection of the sets $\mathcal{S}_\lambda(\varphi)$ for $\varphi \in UF_\lambda(n)$ is the same as the collection of sets $\mathcal{D}_\lambda(\pi)$ for $\pi \in S_n^{\lambda\text{-}312}$. These collections can be simultaneously precisely indexed by $\gamma \in UG_\lambda(n)$ as follows: Given such a $\gamma$, produce $\Phi_\lambda(\gamma) =: \varphi \in UFlr_\lambda(n)$ and $\Pi_\lambda(\gamma) =: \pi \in S_n^{\lambda\text{-}312}$. \label{theorem721.4} \end{enumerate} \end{thm}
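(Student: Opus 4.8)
The plan is to prove Parts (i)--(iv) in the order (ii), (i), (iii), (iv), so that each part may invoke its predecessors, and then to read off the three biconditional summary statements as immediate consequences. The sufficiency half of Part (ii) is essentially carried out already in the motivational paragraph preceding the theorem: for $\pi \in S_n^{\lambda\text{-}312}$ with $\gamma := \Psi_\lambda(\pi)$, one has $\gamma \in UG_\lambda(n)$ by Proposition \ref{prop320.2}(\ref{prop320.2.2}), $Y_\lambda(\pi) = M_\lambda(\gamma)$ by Theorem \ref{theorem340}(\ref{theorem340.2}), and $M_\lambda(\gamma) = Q_\lambda(\gamma)$ by Proposition \ref{prop623.8}(\ref{prop623.8.2}) together with $\Delta_\lambda(\gamma) = \gamma$ (Fact \ref{fact604.2}); combining these with Theorem \ref{theorem520} gives $\mathcal{D}_\lambda(\pi) = [Y_\lambda(\pi)] = [Q_\lambda(\gamma)] = \mathcal{S}_\lambda(\gamma)$, the last equality being Proposition \ref{prop623.8}(\ref{prop623.8.1}). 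For the remaining clause of Part (ii): if also $\mathcal{D}_\lambda(\pi) = \mathcal{S}_\lambda(\beta)$ for some $\beta \in U_\lambda(n)$, then $\mathcal{S}_\lambda(\beta) = \mathcal{S}_\lambda(\gamma)$, so Proposition \ref{prop623.4}(\ref{prop623.4.1}) forces $\Delta_\lambda(\beta) = \Delta_\lambda(\gamma) = \gamma \in UG_\lambda(n)$, and then $\beta \in UGC_\lambda(n)$ by Proposition \ref{prop604.4}(\ref{prop604.4.2}).

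Part (i) follows quickly. Given $\beta \in UGC_\lambda(n)$, set $\gamma := \Delta_\lambda(\beta) \in UG_\lambda(n)$ (Proposition \ref{prop604.4}(\ref{prop604.4.2})) and $\pi := \Pi_\lambda(\gamma)$; Proposition \ref{prop320.2}(\ref{prop320.2.2}) gives $\pi \in S_n^{\lambda\text{-}312}$ with $\Psi_\lambda(\pi) = \gamma$. By Part (ii), $\mathcal{D}_\lambda(\pi) = \mathcal{S}_\lambda(\gamma)$, and $\mathcal{S}_\lambda(\gamma) = \mathcal{S}_\lambda(\beta)$ because $\Delta_\lambda(\gamma) = \gamma = \Delta_\lambda(\beta)$ (Proposition \ref{prop623.4}(\ref{prop623.4.1})). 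Uniqueness of $\pi$ among all $\lambda$-permutations is Fact \ref{fact420}(\ref{fact420.6}).

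The crux is Part (iii), and I expect the only genuine difficulty to be that it rests on the deep convexity result Theorem \ref{theorem520} imported from \cite{PW3}; everything else is bookkeeping with $\lambda$-row end lists. Suppose $\mathcal{S}_\lambda(\beta) = \mathcal{D}_\lambda(\pi)$. By Proposition \ref{prop623.8}(\ref{prop623.8.1}) the left-hand side is the principal ideal $[Q_\lambda(\beta)]$ in $\mathcal{T}_\lambda$, hence a convex polytope in $\mathbb{Z}^{|\lambda|}$, so $\mathcal{D}_\lambda(\pi)$ is convex and Theorem \ref{theorem520} forces $\pi \in S_n^{\lambda\text{-}312}$ and $\mathcal{D}_\lambda(\pi) = [Y_\lambda(\pi)]$. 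Equating the (unique) maximal elements of the two equal principal ideals gives $Q_\lambda(\beta) = Y_\lambda(\pi)$; taking $\lambda$-row end lists of both sides, and using that the $\lambda$-row end list of $Q_\lambda(\beta)$ is $\Delta_\lambda(\beta)$ (the remark following Proposition \ref{prop623.8}) while that of $Y_\lambda(\pi)$ is $\Psi_\lambda(\pi)$ (Section \ref{800}), yields $\Delta_\lambda(\beta) = \Psi_\lambda(\pi)$. Since $\pi$ is $\lambda$-312-avoiding, $\Psi_\lambda(\pi) \in UG_\lambda(n)$ by Proposition \ref{prop320.2}(\ref{prop320.2.2}), so $\Delta_\lambda(\beta) \in UG_\lambda(n)$ and hence $\beta \in UGC_\lambda(n)$ by Proposition \ref{prop604.4}(\ref{prop604.4.2}).

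Finally, Part (iv) and the opening summary assemble from what precedes. By Proposition \ref{prop623.4}(\ref{prop623.4.3}), the flag bound sets $\mathcal{S}_\lambda(\varphi)$ for $\varphi \in UF_\lambda(n)$ are exactly the sets $\mathcal{S}_\lambda(\gamma)$ for $\gamma \in UG_\lambda(n)$, precisely indexed by such $\gamma$; Part (ii) identifies each with $\mathcal{D}_\lambda(\Pi_\lambda(\gamma))$, and Part (iii) shows every Demazure set on a $\lambda$-312-avoiding index arises this way, distinct indices giving distinct sets (Fact \ref{fact420}(\ref{fact420.6})). Since $\Phi_\lambda : UG_\lambda(n) \to UFlr_\lambda(n)$ and $\Pi_\lambda : UG_\lambda(n) \to S_n^{\lambda\text{-}312}$ are bijections (Proposition \ref{prop608.10} and Proposition \ref{prop320.2}(\ref{prop320.2.2})), the assignment $\gamma \mapsto (\Phi_\lambda(\gamma), \Pi_\lambda(\gamma))$ is the asserted simultaneous precise indexing. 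The summary biconditionals then follow: ``$\mathcal{S}_\lambda(\beta)$ is a Demazure set iff $\Delta_\lambda(\beta)$ is gapless'' combines Part (i) with Proposition \ref{prop604.4}(\ref{prop604.4.2}) for one direction and Part (iii) for the other; ``every flag bound set is a Demazure set'' is the case $\varphi \in UF_\lambda(n) \subseteq UGC_\lambda(n)$ of Part (i); ``a row bound set is a Demazure set iff it is a flag bound set'' uses Part (iii) together with Proposition \ref{prop623.1}; and ``$\mathcal{D}_\lambda(\pi)$ is a row bound set iff $\pi$ is $\lambda$-312-avoiding'' combines Parts (ii) and (iii).
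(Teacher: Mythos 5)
Your proposal is correct and follows essentially the same route as the paper: the sufficiency in (ii) via Theorem \ref{theorem520}, Proposition \ref{prop320.2}(\ref{prop320.2.2}), Theorem \ref{theorem340}(\ref{theorem340.2}) and Proposition \ref{prop623.8}; necessity in (iii) via convexity of $[Q_\lambda(\beta)]$ plus comparison of maximal elements and their $\lambda$-row end lists; and (iv) and the summary via Propositions \ref{prop623.4}, \ref{prop623.1}, \ref{prop608.10} and Fact \ref{fact420}(\ref{fact420.6}). The only (harmless) deviations are that you get the uniqueness in (i) and the $\Delta_\lambda(\beta)=\gamma$ claim in (ii) directly from Fact \ref{fact420}(\ref{fact420.6}) and Proposition \ref{prop623.4}(\ref{prop623.4.1}) rather than deducing them from (iii) as the paper does, and the ``every $\lambda$-312-avoiding Demazure set arises'' step in (iv) is really supplied by Part (ii), not Part (iii).
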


\begin{proof}First confirm (i) - (iv):  The first and last two claims in (ii) were deduced above.  For (i), use Proposition \ref{prop320.2}(\ref{prop320.2.2}) to see $\pi \in S_n^{\lambda\text{-}312}$ and to express $\Delta_\lambda(\beta)$ as $\Psi_\lambda(\pi)$.  The first claim in (ii) then tells us that $\mathcal{S}_\lambda[\Delta_\lambda(\beta)] = \mathcal{D}_\lambda(\pi)$.  But Proposition \ref{prop623.4}(\ref{prop623.4.1}) gives $\mathcal{S}_\lambda[\Delta_\lambda(\beta)] = \mathcal{S}_\lambda(\beta)$.  We return to the second claims in (i) and (ii) after we confirm (iii).  So suppose we have $\mathcal{S}_\lambda(\beta) = \mathcal{D}_\lambda(\pi)$.  Since $\mathcal{S}_\lambda(\beta)$ is a principal ideal in $\mathcal{T}_\lambda$, Theorem \ref{theorem520} tells us that we must have $\pi \in S_n^{\lambda\text{-}312}$.  The unique maximal elements of $\mathcal{S}_\lambda(\beta)$ and of $\mathcal{D}_\lambda(\pi)$ (see Proposition \ref{prop623.8}(\ref{prop623.8.1}) and Fact \ref{fact420}(\ref{fact420.5})) must coincide: $Q_\lambda(\beta) = Y_\lambda(\pi)$.  Via consideration of $M_\lambda[\Delta_\lambda(\beta)]$, Proposition \ref{prop623.8}(\ref{prop623.8.2}) implies that the row end list of $Q_\lambda(\beta)$ is $\Delta_\lambda(\beta)$.  Section \ref{800} noted that the row end list of $Y_\lambda(\pi)$ is $\Psi_\lambda(\pi)$.  Hence $\Delta_\lambda(\beta) = \Psi_\lambda(\pi) \in UG_\lambda(n)$.  The uniqueness in (i) is obtained by applying the inverse $\Pi_\lambda$ of $\Psi_\lambda$ to the requirements in (iii) that $\pi \in S_n^{\lambda\text{-}312}$ and that $\Psi_\lambda(\pi) = \Delta_\lambda(\beta)$.  The uniqueness-up-to-$\Delta_\lambda$-equivalence in (ii) restates the second claim of (iii).  For (iv): Proposition \ref{prop623.4}(\ref{prop623.4.3}) says that the collection of the sets $\mathcal{S}_\lambda(\varphi)$ is precisely indexed by the $\lambda$-floor flags.  By restricting Fact \ref{fact420}(\ref{fact420.6}), these sets $\mathcal{D}_\lambda(\pi)$ are already precisely indexed by their $\lambda$-312-avoiding permutations.  By Proposition \ref{prop608.10} and Proposition \ref{prop320.2}(\ref{prop320.2.2}), apply the bijections $\Delta_\lambda$ and $\Psi_\lambda$ to re-index these collections with gapless $\lambda$-tuples.  Use (ii) and Proposition \ref{prop623.4}(\ref{prop623.4.2}) to see that the same set $\mathcal{S}_\lambda(\varphi) = \mathcal{D}_\lambda(\pi)$ will arise from a given gapless $\lambda$-tuple $\gamma$ when these re-indexings are undone via $\varphi := \Phi_\lambda(\gamma)$ and $\pi := \Pi_\lambda(\gamma)$.  Three of the four initial summary statements of this theorem should now be apparent.  The third statement follows from Proposition \ref{prop623.1}. \end{proof}

\section{Flagged Schur functions and key polynomials}\label{737}

We use Theorem \ref{theorem721} to improve upon the results in \cite{RS} and \cite{PS} concerning coincidences between flag Schur polynomials and Demazure polynomials.

Let $x_1,\ldots, x_n$ be indeterminants.  Let $T \in \mathcal{T}_\lambda$.  The \emph{weight} $x^{\Theta(T)}$ of $T$ is $x_1^{\theta_1}\ldots x_n^{\theta_n}$, where $\theta$ is the content $\Theta(T)$.

Let $\beta$ be an upper $\lambda$-tuple:  $\beta \in U_\lambda(n)$.  We introduce the \emph{row bound sum} $s_\lambda(\beta; x) := \sum x^{\Theta(T)}$, sum over $T \in \mathcal{S}_\lambda(\beta)$.  In particular, to relate to the literature \cite{RS} \cite{PS}, at times we restrict our attention to flag row bounds.  Here for $\varphi \in UF_\lambda(n)$ we define the \emph{flag Schur polynomial} to be $s_\lambda(\varphi; x)$.  (Often `upper' is not required at the outset; if $\varphi$ is not upper then the empty sum would yield $0$ for the polynomial.  Following Stanley we write `flag' instead of `flagged' \cite{St2}, and following Postnikov and Stanley we write `polynomial' for `function' \cite{PS}.)  More generally, for $\eta \in UGC_\lambda(n)$ we define the \emph{gapless core Schur polynomial} to be $s_\lambda(\eta;x)$.

Let $\pi$ be a $\lambda$-permutation:  $\pi \in S_n^\lambda$.  Here we define the \emph{Demazure polynomial} $d_\lambda(\pi; x)$ to be $\sum x^{\Theta(T)}$, sum over $T \in \mathcal{D}_\lambda(\pi)$.  For Lie theorists, we make two remarks:  Using the Appendix and Sections 2 and 3 of \cite{PW1}, via the right key scanning method and the divided difference recursion these polynomials can be identified as the Demazure characters for $GL(n)$ and as the specializations of the key polynomials $\kappa_\alpha$ of \cite{RS} to a finite number of variables.  In the axis basis, the highest and lowest weights for the corresponding Demazure module are $\lambda$ and $\Theta[Y_\lambda(\pi)]$.  (Postnikov and Stanley chose `Demazure character' over `key polynomial' \cite{PS}.  By using `Demazure polynomial' for the $GL(n)$ case, which should be recognizable to Lie theorists, we leave `Demazure character' available for general Lie type.)

We say that two polynomials that are defined as sums of the weights over sets of tableaux are \emph{identical as generating functions} if the two tableau sets coincide.  So for row bound sums we write $s_\lambda(\beta; x) \equiv s_{\lambda'}(\beta'; x)$ if and only if $\lambda = \lambda'$ and then $\beta \sim \beta'$.  It is conceivable that the polynomial equality $s_\lambda(\beta; x) = s_{\lambda'}(\beta'; x)$ could ``accidentally'' hold between two non-identical row bound sums, that is when $\lambda \neq \lambda^\prime$ and/or $\beta \nsim \beta'$.

It is likely that Part (ii) of the following preliminary result can be deduced from the sophisticated Corollary 7 of \cite{RS}, which states that the Demazure polynomials form a basis of the polynomial ring.  One would need to show that specializing $x_{n+1} = x_{n+2} = ... = 0$ there does not create problematic linear dependences.

\begin{prop}\label{prop737}Let $\lambda$ and $\lambda'$ be partitions.
\begin{enumerate}[(i)]\setlength\itemsep{-.5em}

\item Let $\beta \in U_\lambda(n)$ and $\beta' \in U_{\lambda'}(n)$.  If $s_\lambda(\beta; x) = s_{\lambda'}(\beta'; x)$, then $\lambda = \lambda'$. \label{prop737.1}

\item Let $\pi \in S_n^\lambda$ and let $\pi' \in S_n^{\lambda'}$.  If $d_\lambda(\pi; x) = d_{\lambda'}(\pi'; x)$, then $\lambda = \lambda'$ and $\pi = \pi'$. \\ Hence $d_\lambda(\pi; x) \equiv d_{\lambda'}(\pi'; x)$. \label{prop737.2} \end{enumerate} \end{prop}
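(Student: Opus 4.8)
The plan is to prove both parts simultaneously by exploiting the extremal (leading or lowest) term of each polynomial. For a set of tableaux closed under the lattice operations of $\mathcal{T}_\lambda$, the maximal tableau contributes a distinguished monomial, and conversely the content of that maximal tableau can be recovered from the polynomial. So first I would fix a monomial order (any order refining the dominance or reverse-dominance order on contents will do) and argue that from the polynomial $s_\lambda(\beta;x)$ one can read off $\Theta(Q_\lambda(\beta))$, the content of the $\lambda$-row bound max tableau, since $\mathcal{S}_\lambda(\beta)=[Q_\lambda(\beta)]$ is a principal ideal by Proposition~\ref{prop623.8}(\ref{prop623.8.1}) and every other tableau $T$ in the ideal satisfies $T \leq Q_\lambda(\beta)$ entrywise, hence $\Theta(T)$ sits strictly below $\Theta(Q_\lambda(\beta))$ in the chosen order. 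The key arithmetic fact is then that $\Theta(Q_\lambda(\beta))$, or equivalently the number $|\lambda|$ of boxes together with the shape's column-length profile, is recoverable from $\lambda$; for this one uses that the trivial columns of $\lambda$ each contribute $x_1x_2\cdots x_n$ and the remaining column data is visible in the exponent vector. The cleanest route is: the total degree of $s_\lambda(\beta;x)$ is $|\lambda|$, and the monomial $x^{\Theta(Q_\lambda(\beta))}$ determines the multiset of column lengths of $\lambda$ because $Q_\lambda(\beta)$ is a tableau of shape $\lambda$ whose $i$-th row ends as large as possible; one then reconstructs $\lambda$ itself from its column lengths. This gives Part~(i): $\lambda = \lambda'$.

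For Part~(ii), $\mathcal{D}_\lambda(\pi)$ is likewise a set of tableaux of shape $\lambda$ with a unique maximal element $Y_\lambda(\pi)$ by Fact~\ref{fact420}(\ref{fact420.5}), and (whether or not $\mathcal{D}_\lambda(\pi)$ is a full principal ideal) every $T \in \mathcal{D}_\lambda(\pi)$ satisfies $T \leq Y_\lambda(\pi)$ entrywise by Fact~\ref{fact420}(\ref{fact420.4}). Hence the same leading-term argument shows that $d_\lambda(\pi;x)$ determines $\Theta(Y_\lambda(\pi))$, and — exactly as above — determines $\lambda$. So from $d_\lambda(\pi;x) = d_{\lambda'}(\pi';x)$ we first get $\lambda = \lambda'$, and then $\Theta(Y_\lambda(\pi)) = \Theta(Y_\lambda(\pi'))$. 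Now the crucial point: $Y_\lambda(\pi)$ and $Y_\lambda(\pi')$ are both $\lambda$-keys, so by Fact~\ref{fact420}(\ref{fact420.1}) (taking $U := Y_\lambda(\pi')$ and $Y := Y_\lambda(\pi)$) equality of contents forces $Y_\lambda(\pi) = Y_\lambda(\pi')$. Finally the assignment $\pi \mapsto Y_\lambda(\pi)$ from $\lambda$-permutations to $\lambda$-keys is a bijection, as recorded in Section~\ref{800} (it is $\pi \mapsto B \mapsto Y$), so $Y_\lambda(\pi) = Y_\lambda(\pi')$ yields $\pi = \pi'$. Once $\lambda = \lambda'$ and $\pi = \pi'$ we have $\mathcal{D}_\lambda(\pi) = \mathcal{D}_{\lambda'}(\pi')$ as sets, which is precisely the assertion $d_\lambda(\pi;x) \equiv d_{\lambda'}(\pi';x)$.

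The step I expect to be the main obstacle is the reconstruction of $\lambda$ from the polynomial — more precisely, verifying carefully that the exponent vector of the leading monomial $x^{\Theta(Q_\lambda(\beta))}$ (resp.\ $x^{\Theta(Y_\lambda(\pi))}$) pins down the shape $\lambda$ uniquely, not merely some coarser invariant. One has to handle the trivial (length-$n$) columns, which always contribute the same factor regardless of $\pi$ or $\beta$, and confirm that the non-trivial column lengths are genuinely read off. A convenient way to finesse this: instead of extracting $\lambda$ from a single monomial, note that setting all $x_i = x$ gives $s_\lambda(\beta;x,\ldots,x) = |\mathcal{S}_\lambda(\beta)|\, x^{|\lambda|}$ for the degree, recovering $|\lambda|$, and then use the leading monomial to get the full content of the max tableau, from which the column lengths $\zeta_1 \geq \cdots \geq \zeta_{\lambda_1}$ and hence $\lambda$ follow. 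The remaining steps — the leading-term comparison and the key-to-permutation bijection — are then routine applications of the cited facts, so I would keep the write-up focused on making the shape-recovery argument airtight.
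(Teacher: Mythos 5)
Your extraction of an extremal monomial is sound as far as it goes: since $\mathcal{S}_\lambda(\beta)=[Q_\lambda(\beta)]$ (Proposition \ref{prop623.8}(\ref{prop623.8.1})) and $\mathcal{D}_\lambda(\pi)\subseteq[Y_\lambda(\pi)]$, the content of the maximal tableau is the unique extreme exponent vector of the polynomial and can be read off.  The genuine gap is exactly the step you flagged: the content $\Theta[Q_\lambda(\beta)]$ does \emph{not} determine $\lambda$, so Part (i) is not proved by this route.  Concretely, take $n=3$, $\lambda=(2,1,0)$ and $\beta=(3,2,3)\in U_\lambda(3)$.  The bound $\beta_2=2$ forces first column $(1,2)$, so $\mathcal{S}_\lambda(\beta)$ consists of three tableaux and $Q_\lambda(\beta)$ (first column $(1,2)$, second column entry $3$) has content $(1,1,1)$ --- the same content as the unique (inert--column) tableau of shape $\lambda'=(1,1,1)$, which is $Q_{\lambda'}(\beta')$ for any upper $\beta'$.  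Both polynomials have total degree $3$ and the same extremal monomial $x_1x_2x_3$, yet $\lambda\neq\lambda'$; neither the degree nor the leading monomial pins down the shape, and the multiset of column lengths is not ``visible in the exponent vector.''  (The two polynomials do differ here, but your argument never looks past the extremal term.)  The paper's fix is to use the opposite extreme: the minimal tableau $T^0_\lambda$ of $\mathcal{T}_\lambda$ has content exactly $\lambda$, lies in every $\mathcal{S}_\lambda(\beta)$ and, being a key with $S(T^0_\lambda)=T^0_\lambda\leq Y_\lambda(\pi)$, in every $\mathcal{D}_\lambda(\pi)$; since $T<T'$ forces $\Theta(T)>\Theta(T')$ lexicographically from the left, $x^{\lambda}$ is the unique lexicographically maximal monomial of either polynomial, so polynomial equality gives $\lambda=\lambda'$ at once, in both (i) and (ii).

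For Part (ii) your argument is salvageable and then essentially the paper's: once $\lambda=\lambda'$ is known, the extremal-monomial comparison gives $\Theta[Y_\lambda(\pi)]=\Theta[Y_\lambda(\pi')]$, and Fact \ref{fact420}(\ref{fact420.1}) together with the bijectivity of $\pi\mapsto Y_\lambda(\pi)$ yields $\pi=\pi'$, hence $\mathcal{D}_\lambda(\pi)=\mathcal{D}_{\lambda'}(\pi')$.  (For Demazure polynomials alone your shape-recovery idea can even be repaired directly, because the content of a $\lambda$-key is a rearrangement of $\lambda$, cf.\ Fact \ref{fact824B}(\ref{fact824B.1}); but that property is special to keys --- your stated justification via ``row ends as large as possible'' is not a proof and, as the example above shows, fails for general $Q_\lambda(\beta)$.)
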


\begin{proof}  Let $T^0_\lambda$ denote the unique minimal element of $\mathcal{T}_\lambda$.  Note that $\Theta(T_\lambda^0) = \lambda$.  Clearly $T^0_\lambda \in \mathcal{S}_\lambda(\beta)$ and $T^0_\lambda \leq Y_\lambda(\pi)$.  Note that if $T, T' \in \mathcal{T}_\lambda$ are such that $T < T'$, then when $P(n)$ is ordered lexicographically from the left we have $\Theta(T) > \Theta(T')$.  So when $T^0_\lambda$ is in a subset of $T_\lambda$, it is the unique tableau in that subset that attains the lexicographic maximum of the contents in $P(n)$ for that subset.  Since $T^0_\lambda$ is a key, Fact \ref{fact420}(\ref{fact420.3}) gives $S(T^0_\lambda) = T^0_\lambda$.  So $S(T^0_\lambda) \leq Y_\lambda(\pi)$, and we have $T^0_\lambda \in \mathcal{D}_\lambda(\pi)$.  We can now see that $s_\lambda(\beta; x) = s_{\lambda'}(\beta'; x)$ and $d_\lambda(\pi; x) = d_{\lambda'}(\pi'; x)$ each imply that $\lambda = \lambda'$.  By Fact \ref{fact420}(\ref{fact420.5}), we know that $Y_\lambda(\pi)$ is the unique maximal element of $\mathcal{D}_\lambda(\pi)$.  So $Y_\lambda(\pi)$ is the unique tableau in $\mathcal{D}_\lambda(\pi)$ that attains the lexicographic minimum of the contents for $\mathcal{D}_\lambda(\pi)$.  By Fact \ref{fact420}(\ref{fact420.1}), since $Y_\lambda(\pi)$ is a key it is the unique tableau in $\mathcal{T}_\lambda$ with its content.  So $d_\lambda(\pi; x) = d_\lambda(\pi'; x)$ implies $Y_\lambda(\pi) \in \mathcal{D}_\lambda(\pi^\prime)$ and $Y_\lambda(\pi^\prime) \in \mathcal{D}_\lambda(\pi)$.  Hence $Y_\lambda(\pi) = Y_\lambda(\pi')$, which implies $\pi = \pi^\prime$.  \end{proof}

We now compare row bound sums to Demazure polynomials.  The first two parts of the following ``sufficient'' theorem quickly restate most of Parts (i) and (ii) of Theorem \ref{theorem721} in the current context, and the third similarly recasts Part (iv).

\begin{thm}\label{theorem737.1}Let $\lambda$ be a partition.
\begin{enumerate}[(i)]\setlength\itemsep{-.5em}

\item If $\eta \in UGC_\lambda(n)$, then $\Pi_\lambda[\Delta_\lambda(\eta)] =: \pi \in S_n^{\lambda\text{-}312}$ and $s_\lambda(\eta;x) \equiv d_\lambda(\pi;x)$. \label{theorem737.1.1}

\item If $\pi \in S_n^{\lambda\text{-}312}$, then $\Psi_\lambda(\pi) =: \gamma \in UG_\lambda(n)$ and $d_\lambda(\pi;x)  \equiv s_\lambda(\gamma;x)$. \label{theorem737.1.2}

\item Every flag Schur polynomial is identical to a uniquely determined Demazure polynomial and every $\lambda$-312-avoiding Demazure polynomial is identical to a uniquely determined flag Schur polynomial. \label{theorem737.1.3} \end{enumerate} \end{thm}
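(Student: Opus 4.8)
The statement is Theorem~\ref{theorem737.1}, and since the row bound sum $s_\lambda(\beta;x)$ is by definition the generating function $\sum_{T \in \mathcal{S}_\lambda(\beta)} x^{\Theta(T)}$ and the Demazure polynomial $d_\lambda(\pi;x)$ is $\sum_{T \in \mathcal{D}_\lambda(\pi)} x^{\Theta(T)}$, the relation ``$\equiv$'' between two such polynomials means exactly that the underlying tableau sets coincide. So Parts (i) and (ii) are immediate translations of the set-level statements in Theorem~\ref{theorem721}, and Part (iii) repackages Part~(iv) of that theorem together with the indexing results of Section~\ref{623}. The plan is therefore to quote Theorem~\ref{theorem721} in each case and check that the hypotheses match.

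\begin{proof}
For (i): Let $\eta \in UGC_\lambda(n)$ and set $\pi := \Pi_\lambda[\Delta_\lambda(\eta)]$. By Proposition~\ref{prop604.4}(\ref{prop604.4.2}), since $\eta \in UGC_\lambda(n)$ we have $\Delta_\lambda(\eta) \in UG_\lambda(n)$, so $\Pi_\lambda$ may be applied and Proposition~\ref{prop320.2}(\ref{prop320.2.2}) gives $\pi \in S_n^{\lambda\text{-}312}$. Theorem~\ref{theorem721}(\ref{theorem721.1}), applied with $\beta := \eta$, then yields $\mathcal{S}_\lambda(\eta) = \mathcal{D}_\lambda(\pi)$. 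By the definitions of the two polynomials as generating functions over these tableau sets, this says exactly $s_\lambda(\eta;x) \equiv d_\lambda(\pi;x)$.

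For (ii): Let $\pi \in S_n^{\lambda\text{-}312}$ and set $\gamma := \Psi_\lambda(\pi)$. By Proposition~\ref{prop320.2}(\ref{prop320.2.2}) the restriction of $\Psi_\lambda$ to $S_n^{\lambda\text{-}312}$ has image $UG_\lambda(n)$, so $\gamma \in UG_\lambda(n) \subseteq UGC_\lambda(n)$. Theorem~\ref{theorem721}(\ref{theorem721.2}) gives $\mathcal{D}_\lambda(\pi) = \mathcal{S}_\lambda(\gamma)$, which is the set-level form of $d_\lambda(\pi;x) \equiv s_\lambda(\gamma;x)$.

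For (iii): By Proposition~\ref{prop623.4}(\ref{prop623.4.3}) the flag bound sets $\mathcal{S}_\lambda(\varphi)$ for $\varphi \in UF_\lambda(n)$ are precisely indexed by the $\lambda$-floor flags, and by restricting Fact~\ref{fact420}(\ref{fact420.6}) the sets $\mathcal{D}_\lambda(\pi)$ for $\pi \in S_n^{\lambda\text{-}312}$ are precisely indexed by those $\pi$. Theorem~\ref{theorem721}(\ref{theorem721.4}) says the two collections of tableau sets coincide, and that both are simultaneously precisely indexed by $\gamma \in UG_\lambda(n)$ via $\varphi := \Phi_\lambda(\gamma)$ and $\pi := \Pi_\lambda(\gamma)$, which are mutually inverse re-indexings by Proposition~\ref{prop608.10} and Proposition~\ref{prop320.2}(\ref{prop320.2.2}). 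Passing to generating functions, each flag Schur polynomial $s_\lambda(\varphi;x)$ equals, as a generating function, the Demazure polynomial $d_\lambda(\pi;x)$ for the $\pi$ associated to $\gamma := \Phi_\lambda^{-1}(\varphi) = \Delta_\lambda(\varphi)$, and this $\pi$ is uniquely determined since the $\mathcal{D}_\lambda(\cdot)$ are precisely indexed; conversely each $\lambda$-312-avoiding Demazure polynomial $d_\lambda(\pi;x)$ equals the flag Schur polynomial $s_\lambda(\varphi;x)$ for the unique $\lambda$-floor flag $\varphi$ representing $\Phi_\lambda[\Psi_\lambda(\pi)]$, the uniqueness coming from the precise indexing of the $\mathcal{S}_\lambda(\varphi)$ by $\lambda$-floor flags. (If one prefers to allow an arbitrary upper flag $\varphi$ rather than its $\lambda$-floor representative, then the Demazure polynomial is still uniquely determined, but the flag recovering it is determined only up to $\sim$; uniqueness of the flag is recovered exactly by working with $\lambda$-floor flags.)
\end{proof}

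\noindent\emph{Remark on difficulty.} There is essentially no obstacle here: every assertion is a verbatim restatement in polynomial language of a tableau-set equality already established in Theorem~\ref{theorem721}, the only care needed being to pass through the bijections $\Delta_\lambda$, $\Psi_\lambda$, $\Pi_\lambda$, $\Phi_\lambda$ so that the ``uniquely determined'' clauses in (iii) are interpreted against the correct precise-indexing set (the $\lambda$-floor flags on one side, the $\lambda$-312-avoiding permutations on the other).
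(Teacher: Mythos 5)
Your proposal is correct and follows the paper's own route: the paper explicitly treats Theorem \ref{theorem737.1} as a direct restatement of Parts (i), (ii), and (iv) of Theorem \ref{theorem721} in generating-function language, which is precisely what you do, using the same supporting results (Propositions \ref{prop604.4}, \ref{prop320.2}, \ref{prop623.4}, \ref{prop608.10}) that the paper invokes in establishing Theorem \ref{theorem721}. Your only addition is to spell out the hypothesis checks and the uniqueness bookkeeping in (iii), which the paper leaves implicit.
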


Next we obtain \emph{necessary} conditions for having equality between a row bound sum and a Demazure polynomial: we see that using the weaker notion of equality between mere polynomials does not lead to any new coincidences.

\begin{thm}\label{theorem737.2}Let $\lambda$ and $\lambda'$ be partitions.  Let $\beta$ be an upper $\lambda$-tuple and let $\pi$ be a $\lambda'$-permutation.  Suppose $s_\lambda(\beta; x) = d_{\lambda'}(\pi; x)$.  Then $Q_\lambda(\beta) = Y_{\lambda'}(\pi)$.  Hence $\lambda = \lambda'$ and $\Delta_\lambda(\beta) = \Psi_\lambda(\pi)$.  Here $\pi$ is $\lambda$-312-avoiding and $\Delta_\lambda(\beta)$ is a gapless $\lambda$-tuple (and so $\beta \in UGC_\lambda(n)$).  Hence the only row bound sums that arise as Demazure polynomials are the flag Schur polynomials.  We have $s_\lambda(\beta; x) \equiv d_{\lambda'}(\pi; x)$.  The row bound sum $s_\lambda(\beta; x)$ is identical to the flag Schur polynomial $s_\lambda(\Phi_\lambda[\Delta_\lambda(\beta)];x)$.  \end{thm}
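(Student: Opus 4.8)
The plan is to bootstrap from the polynomial identity to an identity of the underlying tableau sets, using the lexicographic--extreme--content argument from the proof of Proposition \ref{prop737}, and then to invoke Theorem \ref{theorem721}(iii), which already packages the convexity input (Theorem \ref{theorem520}) needed for everything after that.

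First I would use the two lexicographic extremes of the content. Order $P(n)$ lexicographically from the left, so that $T < T'$ in a lattice $\mathcal{T}_\lambda$ forces $\Theta(T) > \Theta(T')$. On the row bound side, $\mathcal{S}_\lambda(\beta) = [Q_\lambda(\beta)]$ is a principal ideal of $\mathcal{T}_\lambda$ by Proposition \ref{prop623.8}(\ref{prop623.8.1}), with unique minimal element the minimal tableau $T^0_\lambda$ and unique maximal element $Q_\lambda(\beta)$; hence the lex-largest content appearing in $s_\lambda(\beta;x)$ is $\Theta(T^0_\lambda) = \lambda$ and the lex-smallest is $\Theta(Q_\lambda(\beta))$, each with coefficient $1$ ($T^0_\lambda$ and $Q_\lambda(\beta)$ are the unique tableaux of $\mathcal{T}_\lambda$ with their contents in the respective subsets). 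On the Demazure side, $T^0_{\lambda'} \in \mathcal{D}_{\lambda'}(\pi)$ realizes the lex-largest content $\lambda'$, and $Y_{\lambda'}(\pi)$, the unique maximal element of $\mathcal{D}_{\lambda'}(\pi)$ by Fact \ref{fact420}(\ref{fact420.5}), realizes the lex-smallest content $\Theta(Y_{\lambda'}(\pi))$; since $Y_{\lambda'}(\pi)$ is a key, Fact \ref{fact420}(\ref{fact420.1}) makes that content occur with coefficient $1$ and belong to no other tableau of $\mathcal{T}_{\lambda'}$. Comparing lex extremes of the equal polynomials $s_\lambda(\beta;x) = d_{\lambda'}(\pi;x)$ gives $\lambda = \lambda'$ and then $\Theta(Q_\lambda(\beta)) = \Theta(Y_\lambda(\pi))$, whence $Q_\lambda(\beta) = Y_\lambda(\pi)$ by Fact \ref{fact420}(\ref{fact420.1}) applied to the key $Y_\lambda(\pi)$.

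Next I would upgrade this to set equality. From $Q_\lambda(\beta) = Y_\lambda(\pi)$ and Fact \ref{fact420}(\ref{fact420.4}) we get $\mathcal{D}_\lambda(\pi) \subseteq [Y_\lambda(\pi)] = [Q_\lambda(\beta)] = \mathcal{S}_\lambda(\beta)$. Since each tableau contributes a single monomial with coefficient $1$, the polynomial $\sum_{T \in \mathcal{S}_\lambda(\beta)\setminus\mathcal{D}_\lambda(\pi)} x^{\Theta(T)}$ equals $s_\lambda(\beta;x) - d_\lambda(\pi;x) = 0$, forcing $\mathcal{S}_\lambda(\beta)\setminus\mathcal{D}_\lambda(\pi) = \emptyset$; thus $\mathcal{S}_\lambda(\beta) = \mathcal{D}_\lambda(\pi)$, i.e. $s_\lambda(\beta;x) \equiv d_{\lambda'}(\pi;x)$. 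Now Theorem \ref{theorem721}(\ref{theorem721.3}) applies directly and yields $Q_\lambda(\beta) = Y_\lambda(\pi)$ (reconfirmed), $\Delta_\lambda(\beta) = \Psi_\lambda(\pi)$, $\pi \in S_n^{\lambda\text{-}312}$, and $\beta \in UGC_\lambda(n)$; since $\pi$ is $\lambda$-312-avoiding, Proposition \ref{prop320.2}(\ref{prop320.2.2}) gives $\Psi_\lambda(\pi) \in UG_\lambda(n)$, so $\Delta_\lambda(\beta)$ is a gapless $\lambda$-tuple. Finally, because $\beta \in UGC_\lambda(n)$, Proposition \ref{prop623.1} gives $\mathcal{S}_\lambda(\beta) = \mathcal{S}_\lambda(\varphi)$ for $\varphi := \Phi_\lambda[\Delta_\lambda(\beta)] \in UFlr_\lambda(n) \subseteq UF_\lambda(n)$, so $s_\lambda(\beta;x) = s_\lambda(\varphi;x)$ is a flag Schur polynomial and, the tableau sets being equal, $s_\lambda(\beta;x) \equiv s_\lambda(\Phi_\lambda[\Delta_\lambda(\beta)];x)$.

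I expect the only real obstacle to be the order of operations in the bootstrap: one cannot upgrade the inclusion $\mathcal{D}_\lambda(\pi) \subseteq \mathcal{S}_\lambda(\beta)$ to an equality until the generators $Q_\lambda(\beta)$ and $Y_\lambda(\pi)$ have been identified, and that identification itself requires the lex-extreme-content observation (and the key-content uniqueness of Fact \ref{fact420}(\ref{fact420.1})) rather than any new structural fact. Everything beyond that is a bookkeeping application of Theorem \ref{theorem721}(\ref{theorem721.3}) and Proposition \ref{prop623.1}.
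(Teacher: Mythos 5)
Your proof is correct and follows essentially the same route as the paper: the lexicographic-extreme-content comparison giving $\lambda=\lambda'$ and $Q_\lambda(\beta)=Y_\lambda(\pi)$, positivity of the weight generating functions to pass from polynomial equality to equality of the underlying tableau sets, and Theorem \ref{theorem520} for the $\lambda$-312-avoidance (which you access packaged inside Theorem \ref{theorem721}(\ref{theorem721.3}), whereas the paper invokes it directly in a short contradiction argument after deriving $\Delta_\lambda(\beta)=\Psi_\lambda(\pi)$ from the row end lists). The closing claims are handled exactly as in the paper via Proposition \ref{prop623.1} and the flag $\Phi_\lambda[\Delta_\lambda(\beta)]$.
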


\begin{proof}\noindent Reasoning as in the first part of the proof of Proposition \ref{prop737} implies $\lambda = \lambda'$.  Since $\mathcal{S}_\lambda(\beta) = [Q_\lambda(\beta)]$ by Proposition \ref{prop623.8}(\ref{prop623.8.1}), the tableau $Q_\lambda(\beta)$ is the unique tableau in $\mathcal{S}_\lambda(\beta)$ that attains the lexicographic minimum of the contents for $\mathcal{S}_\lambda(\beta)$.  Since the analogous remark was made in the proof of Proposition \ref{prop737} for $Y_\lambda(\pi) \in \mathcal{D}_\lambda(\pi)$, we must have $\Theta[Q_\lambda(\beta)] = \Theta[Y_\lambda(\pi)]$.  But $Y_\lambda(\pi)$ is the unique tableau in $\mathcal{T}_\lambda$ with its content.  So we must have $Q_\lambda(\beta) = Y_\lambda(\pi)$.  As for Theorem \ref{theorem721}, this implies $\Delta_\lambda(\beta) = \Psi_\lambda(\pi)$.  Since $\mathcal{D}_\lambda(\pi) \subseteq [Y_\lambda(\pi)]$, we have $\mathcal{D}_\lambda(\pi) \subseteq [Q_\lambda(\beta)] = \mathcal{S}_\lambda(\beta)$.  Suppose that $\pi$ is $\lambda$-312-containing.  Then Theorem \ref{theorem520} says $\mathcal{D}_\lambda(\pi) \neq [Q_\lambda(\beta)]$.  So $\mathcal{D}_\lambda(\pi) \subset \mathcal{S}_\lambda(\beta)$.  This implies that $d_\lambda(\pi; x) \neq s_\lambda(\beta; x)$, a contradiction.  So $\pi$ must be $\lambda$-312-avoiding.  Therefore $\Psi_\lambda(\pi) =: \gamma \in UG_\lambda(n)$.  Use Proposition \ref{prop623.1} for the ``only row bound sums that can arise'' statement.  Theorem \ref{theorem721}(\ref{theorem721.2}) now says that $\mathcal{D}_\lambda(\pi) = \mathcal{S}_\lambda(\gamma)$.  And $\gamma = \Delta_\lambda(\beta)$ gives $\mathcal{S}_\lambda(\gamma) = \mathcal{S}_\lambda(\beta)$.  Since $\gamma \in UG_\lambda(n)$, we can form the $\lambda$-floor flag $\Phi_\lambda[\Delta_\lambda(\beta)] \sim \beta$. \end{proof}

By using the relating of row bound sums to Demazure polynomials in this theorem, we can extend what was said in Proposition \ref{prop737}(\ref{prop737.1}) concerning accidental equalities between row bound sums.  There we learned that $s_\lambda(\beta;x) = s_\lambda(\beta';x)$ forced $\lambda = \lambda'$.  So here we need consider only one partition:

\begin{cor}\label{newcor737}Let $\lambda$ be a partition.
\begin{enumerate}[(i)]\setlength\itemsep{-.5em}

\item Let $\beta \in U_\lambda(n)$ and $\eta \in UGC_\lambda(n)$.  If $s_\lambda(\beta;x) = s_\lambda(\eta;x)$ then $\beta \sim \eta$.  Hence $\beta \in UGC_\lambda(n)$ and $s_\lambda(\beta;x) \equiv s_\lambda(\eta;x)$. \label{newcor737.1}

\item The partitionings of $UGC_\lambda(n)$ into the equivalence class intervals in Proposition \ref{prop608.4}(\ref{prop608.4.2}) give a complete description of the indexing ambiguity and of non-equality for gapless core Schur polynomials. \label{newcor737.2}

\item More specifically, the analogous statement for $UF_\lambda(n)$ and flag Schur polynomials follows from Proposition \ref{prop608.4}(\ref{prop608.4.3}). \label{newcor737.3} \end{enumerate} \end{cor}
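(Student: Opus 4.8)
The plan is to deduce all three parts from the already-established Theorem~\ref{theorem737.2}, using Theorem~\ref{theorem737.1}(\ref{theorem737.1.1}) to rewrite the gapless core Schur polynomial $s_\lambda(\eta;x)$ as a genuine Demazure polynomial. For part~(i) I would first set $\pi := \Pi_\lambda[\Delta_\lambda(\eta)]$. Since $\eta \in UGC_\lambda(n)$, Proposition~\ref{prop604.4}(\ref{prop604.4.2}) gives $\Delta_\lambda(\eta) \in UG_\lambda(n)$, so Proposition~\ref{prop320.2}(\ref{prop320.2.2}) makes $\pi$ a $\lambda$-312-avoiding $\lambda$-permutation with $\Psi_\lambda(\pi) = \Delta_\lambda(\eta)$, while Theorem~\ref{theorem737.1}(\ref{theorem737.1.1}) yields $s_\lambda(\eta;x) \equiv d_\lambda(\pi;x)$, that is, $\mathcal{S}_\lambda(\eta) = \mathcal{D}_\lambda(\pi)$. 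The hypothesis $s_\lambda(\beta;x) = s_\lambda(\eta;x)$ then reads $s_\lambda(\beta;x) = d_\lambda(\pi;x)$, which is exactly the hypothesis of Theorem~\ref{theorem737.2}.

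Applying Theorem~\ref{theorem737.2} does essentially all the work: it returns $\beta \in UGC_\lambda(n)$ together with $s_\lambda(\beta;x) \equiv d_\lambda(\pi;x)$, i.e.\ $\mathcal{S}_\lambda(\beta) = \mathcal{D}_\lambda(\pi)$. Combining this with $\mathcal{S}_\lambda(\eta) = \mathcal{D}_\lambda(\pi)$ gives $\mathcal{S}_\lambda(\beta) = \mathcal{S}_\lambda(\eta)$, hence $\beta \approx_\lambda \eta$, i.e.\ $\beta \sim \eta$; equivalently one may instead note that Theorem~\ref{theorem737.2} also gives $\Delta_\lambda(\beta) = \Psi_\lambda(\pi) = \Delta_\lambda(\eta)$ and invoke Proposition~\ref{prop623.4}(\ref{prop623.4.1}). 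Finally $s_\lambda(\beta;x) \equiv s_\lambda(\eta;x)$ is just the definitional restatement of $\beta \sim \eta$ for row bound sums. That settles~(i).

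For part~(ii) I would specialize (i) to the case $\beta = \eta' \in UGC_\lambda(n) \subseteq U_\lambda(n)$: then $s_\lambda(\eta';x) = s_\lambda(\eta;x)$ forces $\eta' \sim \eta$, while the converse implication ``$\eta' \sim \eta \Rightarrow \mathcal{S}_\lambda(\eta') = \mathcal{S}_\lambda(\eta) \Rightarrow$ equal polynomials'' is immediate. Hence the fibres of the assignment $\eta \mapsto s_\lambda(\eta;x)$ on $UGC_\lambda(n)$ are exactly the $\approx_\lambda$-classes, which by Proposition~\ref{prop608.4}(\ref{prop608.4.2}) (via Proposition~\ref{prop623.4}(\ref{prop623.4.2})) are precisely the equivalence-class intervals described there, and by Corollary~\ref{cor608.6}(\ref{cor608.6.2}) admit the listed systems of unique representatives; this is the asserted complete description of the indexing ambiguity and of non-equality. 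Part~(iii) is the identical argument restricted to $UF_\lambda(n) \subseteq UGC_\lambda(n)$: (i) applies verbatim, and Proposition~\ref{prop608.4}(\ref{prop608.4.3}) together with Corollary~\ref{cor608.6}(\ref{cor608.6.3}) supply the interval description of the $\approx_\lambda$-classes inside $UF_\lambda(n)$ and the corresponding unique representatives.

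I expect no genuine obstacle here: the substance has been front-loaded into Theorem~\ref{theorem737.2} and the structural results of Sections~\ref{604} and~\ref{608}. The only points needing care are, first, that in part~(i) one must not argue directly with critical lists when $\beta$ is allowed to lie in $U_\lambda(n) \setminus UGC_\lambda(n)$, but instead route through the Demazure polynomial so that the ``only the flag Schur polynomials arise as Demazure polynomials'' conclusion of Theorem~\ref{theorem737.2} forces $\beta \in UGC_\lambda(n)$; and second, that in part~(iii) one must remember that the $\approx_\lambda$-class of a flag shrinks on passing from $UGC_\lambda(n)$ to $UF_\lambda(n)$ --- all of the larger interval yields one and the same polynomial, but only its flag-tuple sub-interval consists of upper flags --- which is exactly the content recorded by Proposition~\ref{prop608.4}(\ref{prop608.4.3}).
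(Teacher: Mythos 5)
Your proposal is correct and follows essentially the same route as the paper: convert $s_\lambda(\eta;x)$ into the Demazure polynomial $d_\lambda(\pi;x)$ for $\pi := \Pi_\lambda[\Delta_\lambda(\eta)]$ via Theorem \ref{theorem737.1} (the paper uses part (\ref{theorem737.1.2}) where you use part (\ref{theorem737.1.1}), an immaterial difference), then apply Theorem \ref{theorem737.2} to $s_\lambda(\beta;x)=d_\lambda(\pi;x)$ to get $\beta\in UGC_\lambda(n)$ and $s_\lambda(\beta;x)\equiv s_\lambda(\eta;x)$, hence $\beta\sim\eta$. Your treatment of parts (ii) and (iii) via specializing (i) and citing Proposition \ref{prop608.4} and Corollary \ref{cor608.6} matches what the paper leaves implicit.
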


\noindent Parts (ii) and (iii) could have been derived from Theorem \ref{theorem737.1}.

\begin{proof}Create $\Pi_{\lambda}[\Delta_{\lambda}(\eta)] =: \pi \in S_n^{\lambda\text{-}312}$ from $\Delta_{\lambda}(\eta) \in UG_\lambda(n)$.  Apply Theorem \ref{theorem737.1}(\ref{theorem737.1.2}) to obtain $d_{\lambda}(\pi; x) \equiv s_\lambda(\Delta_\lambda(\eta);x) \equiv s_\lambda(\eta;x)$.  Then apply Theorem \ref{theorem737.2} to $s_\lambda(\beta; x) = d_{\lambda}(\pi; x)$ to obtain $s_\lambda(\beta; x) \equiv d_\lambda(\pi; x)$.  So $s_\lambda(\beta; x) \equiv s_{\lambda}(\eta; x)$, which implies $\beta \sim \eta$ and $\beta \in UGC_\lambda(n)$.  \end{proof}

We do not know if it is possible to rule out accidental coincidences between all pairs of row bound sums:

\begin{prob}\label{prob14.5}Find $n \geq 1$, a partition $\lambda$, and $\beta, \beta^\prime \in U_\lambda(n) \backslash UGC_\lambda(n)$ such that $s_\lambda(\beta;x) = s_\lambda(\beta^\prime;x)$ but $\Delta_\lambda(\beta) \neq \Delta_\lambda(\beta^\prime)$. \end{prob}

Improving upon Equation 13.1 and Corollary 14.6 of \cite{PS}, in \cite{PW4} we will give a ``maximum efficiency'' determinant expression for the Demazure polynomials $d_\lambda(\pi;x)$ with $\pi \in S_n^{\lambda\text{-}312}$.

\section{Projecting and lifting the notion of 312-avoidance}\label{824}

In Propositions \ref{prop824.2} and \ref{prop824.4} we use the six maps $\Psi, \Pi, \Psi_R, \Pi_R, \Delta_R$, and $\Phi_R$ that we developed for other purposes to relate the notion of $R$-312-avoidance to that of 312-avoidance.  Some of the applications of these maps ``sort'' the entries of the $R$-tuples within their carrels.  The proofs of the propositions in this section appeared in Section 7 of \cite{PW2}.

If $\sigma \in S_n$ is 312-avoiding, it is easy to see that its $R$-projection $\bar{\sigma} \in S_n^R$ is $R$-312-avoiding.  Let $\pi \in S^R_n$ be $R$-312-avoiding.  Is it the $R$-projection $\bar{\sigma}$ of some 312-avoiding permutation $\sigma \in S_n$?  The following procedure for constructing an answer to this question can be naively developed, keeping in mind Fact \ref{fact320.3}(\ref{fact320.3.3}):  Form the $R$-rightmost clump deleting chain $B$ associated to $\pi$.  Set $\sigma_i := \pi_i$ on the first carrel $(0, q_1]$.  Let $h \in [r]$.  Let $s \geq 0$ be the number of elements of $B_{h+1} \backslash B_{h}$ that are less than $\max(B_{h}) =: m$.  List these elements in decreasing order to fill the left side $(q_h, q_h+s]$ of the $(h+1)^{st}$ carrel $(q_h, q_{h+1}]$ of $\sigma$.  Fill the right side $(q_h+s, q_{h+1}]$ of this carrel of $\sigma$ by listing the other $t := p_{h+1} - s$ elements of $B_{h+1} \backslash B_{h}$ in increasing order.  Part (ii) of the following result refers to the ``length'' of a permutation in the sense of Proposition 1.5.2 of \cite{BB}.

\begin{prop}\label{prop824.1}Suppose $\pi \in S^R_n$ is $R$-312-avoiding.
\begin{enumerate}[(i)]\setlength\itemsep{-.5em}

\item The permutation $\sigma \in S_n$ constructed here is 312-avoiding and $\bar{\sigma} = \pi$. \label{prop824.1.1}

\item This $\sigma$ is the unique minimum length 312-avoiding lift of $\pi$. \label{prop824.1.2} \end{enumerate} \end{prop}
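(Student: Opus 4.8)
The plan is to prove (i) by translating $312$-avoidance for $\sigma$ into the ``$R$-rightmost clump deleting'' language of Section~\ref{320} specialized to the full case $R=[n-1]$, and to prove (ii) by a length (inversion-count) bookkeeping argument anchored by one forced-pattern observation.

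For (i), the identity $\bar\sigma=\pi$ is essentially automatic: on the first carrel $\sigma$ agrees with $\pi$, and on the $(h{+}1)^{st}$ carrel the entries of $\sigma$ are by construction exactly the elements of $B_{h+1}\setminus B_h$, which is also the entry set of $\pi$ there; since $\pi$ is $R$-increasing, sorting either word on each carrel returns $\pi$. (In particular $\sigma\in S_n$, the sets $B_{h+1}\setminus B_h$ being disjoint with union $[n]$.) For $312$-avoidance, let $C$ be the full chain $C_i:=\{\sigma_1,\dots,\sigma_i\}$; by Proposition~\ref{prop320.2}(\ref{prop320.2.1}) in the case $R=[n-1]$ it suffices to check that $C$ is rightmost clump deleting, that is (Fact~\ref{fact320.3}(\ref{fact320.3.1}) with $R=[n-1]$), that $(\sigma_{i+1},\max(C_i)]\subseteq C_i$ whenever $\sigma_{i+1}<\max(C_i)$. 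On the first carrel $\sigma$ is increasing, so each new entry exceeds $\max(C_i)$ and the condition is vacuous. On the left side of the $(h{+}1)^{st}$ carrel we insert the elements of $B_{h+1}\setminus B_h$ lying below $m_h:=\max(B_h)$ in decreasing order $\ell_1>\ell_2>\cdots$; here $\max(C_i)=m_h$ throughout, and since the chain of the $R$-312-avoiding $\pi$ is $R$-rightmost clump deleting (Proposition~\ref{prop320.2}(\ref{prop320.2.1})), Fact~\ref{fact320.3}(\ref{fact320.3.3}) tells us the $\ell_k$ are the successive largest elements of $[m_h]\setminus B_h$, so at the step inserting $\ell_{k+1}$ we have $(\ell_{k+1},m_h]\subseteq B_h\cup\{\ell_1,\dots,\ell_k\}=C_i$. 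On the right side we insert the elements above $m_h$ in increasing order, so each new entry exceeds the running maximum and the condition is again vacuous. Hence $C$ is rightmost clump deleting and $\sigma$ is $312$-avoiding.

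For (ii), the lifts of $\pi$ are exactly the permutations obtained by re-ordering, within each carrel, the entry set prescribed by $\pi$; so the number of inversions of a lift $\tau$ splits as a ``cross-carrel'' count, which depends only on which set sits in which carrel and hence is the same for every lift (in particular equal to that of $\sigma$), plus the within-carrel counts $I_h(\tau)$. The construction gives $I_1(\sigma)=0$ and $I_{h+1}(\sigma)=\binom{s_h}{2}$ for $h\in[r]$, where $s_h$ is the number of entries of $B_{h+1}\setminus B_h$ below $m_h$ (the left block is decreasing, the right block increasing, and each left entry is below each right entry). The crux is that \emph{every} $312$-avoiding lift $\tau$ satisfies $I_{h+1}(\tau)\ge\binom{s_h}{2}$: the positions $j$ in carrel $h{+}1$ with $\tau_j<m_h$ number exactly $s_h$ (a count intrinsic to the chain $B$), and if two of them $j<j'$ had $\tau_j<\tau_{j'}$, then picking $a\le q_h$ with $\tau_a=m_h$ would give $\tau_a>\tau_{j'}>\tau_j$ at positions $a<j<j'$, a $312$-pattern; so those $s_h$ positions are pairwise inverted. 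Summing over carrels yields $\ell(\tau)\ge\ell(\sigma)$ (where $\ell$ denotes the number of inversions), and by (i) $\sigma$ is itself such a lift, so $\sigma$ has minimal length. For uniqueness, if $\ell(\tau)=\ell(\sigma)$ then $I_1(\tau)=0$ forces carrel $1$ of $\tau$ to be increasing, and $I_{h+1}(\tau)=\binom{s_h}{2}$ forces carrel $h{+}1$ to have no inversions beyond the $\binom{s_h}{2}$ already present among the low positions; hence the high entries occur in increasing order, no high position precedes a low one, and carrel $h{+}1$ of $\tau$ equals that of $\sigma$. Therefore $\tau=\sigma$.

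The routine parts are checking that the cross-carrel inversion count is lift-independent and that $\sigma$ realizes the stated within-carrel counts. The two steps that need care are (a) getting the rightmost-clump-deleting bookkeeping exactly right on the left side of each carrel, which is precisely where Fact~\ref{fact320.3}(\ref{fact320.3.3}) is essential, since one must track $\max(C_i)$ through the decreasing left block; and (b) the forced-$312$-pattern step in (ii), which is the single combinatorial idea that pins down the optimal within-carrel orderings. I expect (a) to be the main obstacle to writing cleanly.
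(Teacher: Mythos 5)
Your proof is correct. For part (i), verifying $\bar{\sigma}=\pi$ carrel-by-carrel and then checking that the full chain $C_i=\{\sigma_1,\dots,\sigma_i\}$ is rightmost clump deleting (via the full-case $R=[n-1]$ specialization of Proposition \ref{prop320.2}(\ref{prop320.2.1}) and Fact \ref{fact320.3}), with the key step on the left block supplied by Fact \ref{fact320.3}(\ref{fact320.3.3}) applied to the chain of $\pi$, is exactly the machinery this paper sets up for the construction, so that part runs along the intended lines; note the paper itself defers the written proof to Section 7 of \cite{PW2}, so an exact textual comparison is not possible here. Your part (ii) is a clean, self-contained argument: the decomposition of the inversion number into a lift-independent cross-carrel count plus within-carrel counts, the forced-$312$ observation (any two below-$m_h$ entries in carrel $h{+}1$ of a $312$-avoiding lift must be inverted, using that $m_h$ occurs at some position $a\le q_h$), and the equality analysis for uniqueness are all sound; this gives both minimality and uniqueness in one stroke, whereas the paper's surrounding material (Proposition \ref{prop824.3}, which catalogues all $312$-avoiding lifts as local rearrangements of $\sigma$) suggests an alternative route in which one compares $\sigma$ to the other lifts rearrangement-by-rearrangement. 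Your inversion-count argument has the advantage of not needing that classification, at the modest cost of the bookkeeping you flag yourself; I see no gaps.
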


This lifting process can also be described using three existing maps.  To pass from the ``degenerate'' $R$-world to the full $R = [n-1]$ world of ordinary permutations, for the second equality below we use the map $\Pi$.  This produces a final output of a permutation from the given $R$-permutation input.  We will use the following result to derive a weaker version of our Theorem \ref{theorem737.1}(\ref{theorem737.1.2}) from Theorem 14.1 of \cite{PS}:

\begin{prop}\label{prop824.2}Suppose $\pi \in S_n^R$ is $R$-312-avoiding.  Let $\sigma \in S_n$ be the minimum length 312-avoiding lift of $\pi$.  Then $\Delta_R[\Psi(\sigma)] = \Psi_R(\pi)$ and so $\sigma = \Pi[\Phi_R(\Psi_R(\pi))]$.  \end{prop}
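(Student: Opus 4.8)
The plan is to first pin down the explicit shape of the running–maximum flag $\Psi(\sigma)$, then show that the $R$-core map $\Delta_R$ collapses it onto $\Psi_R(\pi)$, and finally read off the second identity.

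\textbf{Step 1: the shape of $\Psi(\sigma)$.} Let $B$ be the $R$-rightmost clump deleting chain associated to $\pi$ (Proposition \ref{prop320.2}(\ref{prop320.2.1})); since on each carrel $(q_h,q_{h+1}]$ the entries of $\sigma$ are precisely the elements of $B_{h+1}\backslash B_{h}$, this $B$ is also the chain associated to $\sigma$. Fix $h\in[r]$, put $m:=\max(B_h)$, let $s$ be the number of elements of $B_{h+1}\backslash B_h$ that are less than $m$, and let $c_1<\dots<c_t$ (with $s+t=p_{h+1}$) be the elements of $B_{h+1}\backslash B_h$ that exceed $m$. Because in the $(h+1)^{st}$ carrel $\sigma$ lists the small new elements in decreasing order and then the large new elements in increasing order, the running maximum $\Psi(\sigma)_i:=\max\{\sigma_1,\dots,\sigma_i\}$ equals $m$ on the first $s$ indices of that carrel and equals $c_1,\dots,c_t$ on the remaining $t$ indices, while on the first carrel $\Psi(\sigma)$ agrees with $\pi$. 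So on the $(h+1)^{st}$ carrel $\Psi(\sigma)$ is $(\underbrace{m,\dots,m}_{s},c_1,\dots,c_t)$; in particular $\Psi(\sigma)$ is weakly increasing and upper, hence an upper flag.

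\textbf{Step 2: $\Delta_R[\Psi(\sigma)]=\Psi_R(\pi)$.} By definition the $(h+1)^{st}$ carrel of $\Psi_R(\pi)$ lists the $p_{h+1}$ largest elements of $B_{h+1}$ in increasing order; its top $t$ entries are $c_1,\dots,c_t$, and, by Fact \ref{fact320.3}(\ref{fact320.3.3}) (the $s$ small new elements are the $s$ largest elements of $[m]\backslash B_h$, whence $[m-s+1,m]\subseteq B_{h+1}$), its next $s$ entries are the consecutive run $m-s+1,\dots,m$. Thus the $(h+1)^{st}$ carrel of $\Psi_R(\pi)$ is $(m-s+1,\dots,m,c_1,\dots,c_t)$, which is a concatenation of staircases whose tops are its critical entries. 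Running the $\Delta_R$ recursion on the $(h+1)^{st}$ carrel of $\Psi(\sigma)$, the critical indices found among the $c$'s are the tops of the maximal staircases formed by $c_1,\dots,c_t$ (the same for both tuples), and the flat block of $m$'s is absorbed: when $c_1>m+1$ the leftmost critical index on this carrel is the last index of the $m$-block, with critical entry $m$, and filling downward by ones reproduces $m-s+1,\dots,m$; when $c_1=m+1$ the entry $m$ ceases to be critical but the same run is reconstructed by the leftmost staircase of the $c$'s (the degenerate cases $s=0$ and $t=0$ being handled the same way). Hence $\Psi(\sigma)$ and $\Psi_R(\pi)$ have the same $R$-critical list, so by Fact \ref{fact604.2}, together with $\Psi_R(\pi)\in UG_R(n)\subseteq UI_R(n)$ (Proposition \ref{prop320.2}(\ref{prop320.2.2})), we get $\Delta_R[\Psi(\sigma)]=\Delta_R[\Psi_R(\pi)]=\Psi_R(\pi)$.

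\textbf{Step 3: the second identity.} I would show $\Psi(\sigma)\in UFlr_R(n)$: a non-trivial plateau of $\Psi(\sigma)$ at value $v$ begins at the index where the running maximum first reaches $v$, and the carrel structure of $\sigma$ (decreasing small block, then increasing large block, per carrel, with the first carrel strictly increasing) forces that index to be the last index $q_g$ of the carrel whose largest new element is $v$ --- a divider with $g\in[r]$. With $\Psi(\sigma)\in UFlr_R(n)$, Proposition \ref{prop608.10} ($\Phi_R\circ\Delta_R=\mathrm{id}$ on $UFlr_R(n)$) gives $\Phi_R(\Psi_R(\pi))=\Phi_R\bigl(\Delta_R[\Psi(\sigma)]\bigr)=\Psi(\sigma)$; applying the inverse $\Pi$ of $\Psi|_{S_n^{312}}$ (Proposition \ref{prop320.2}(\ref{prop320.2.2}) in the full case, using $\sigma\in S_n^{312}$ from Proposition \ref{prop824.1}) yields $\Pi[\Phi_R(\Psi_R(\pi))]=\Pi[\Psi(\sigma)]=\sigma$.

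The main obstacle is the carrel-level bookkeeping in Step 2: one must check that the $\Delta_R$ recursion disposes of the flat block of $m$'s correctly in both the $c_1>m+1$ and $c_1=m+1$ cases (and in the degenerate cases $s=0$, $t=0$), and one must verify that the rightmost clump deleting hypothesis is exactly what makes the $s$ largest elements of $B_{h+1}$ below $m$ into the consecutive block $m-s+1,\dots,m$ needed to match $\Psi_R(\pi)$. The remaining steps are routine applications of the cited facts.
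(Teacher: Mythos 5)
Your argument is correct, and since the paper itself defers the proof of this proposition to Section 7 of \cite{PW2}, there is no in-text proof to compare against; judged on its own terms, your proof is sound and uses exactly the machinery this paper provides. Step 1's description of $\Psi(\sigma)$ as $(m,\dots,m,c_1,\dots,c_t)$ per carrel is right, and in Step 2 the identification of the carrel of $\Psi_R(\pi)$ as $(m-s+1,\dots,m,c_1,\dots,c_t)$ does follow from Fact \ref{fact320.3}(\ref{fact320.3.3}) via the small argument you gesture at (any $y\in[m-s+1,m]\setminus B_h$ has fewer than $s$ elements of $[m]\setminus B_h$ above it, so it is among the $s$ added elements); your case split $c_1>m+1$ versus $c_1=m+1$, together with the degenerate cases $s=0$, $t=0$, is exactly the bookkeeping needed to see the two tuples share an $R$-critical list, after which Fact \ref{fact604.2} and $\Psi_R(\pi)\in UG_R(n)$ finish the first identity. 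You also correctly recognized that the ``and so'' clause is not a formal consequence of the first identity alone: one needs $\Psi(\sigma)\in UFlr_R(n)$ so that $\Phi_R\circ\Delta_R$ fixes it, and your plateau argument (a non-trivial plateau of the running maximum can only begin at the position $q_g$ where the largest element of $B_g$ enters) supplies this; reassuringly, this is consistent with the remark in Section \ref{824B} that $\Psi(\sigma)$ is the $\lambda$-floor flag $\Phi_\lambda(\gamma)$.
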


We further consider an $R$-312-avoiding permutation $\pi$ and its associated $R$-rightmost clump deleting chain, again keeping in mind the picture provided by Fact \ref{fact320.3}(\ref{fact320.3.3}).  We want to describe all 312-avoiding lifts $\sigma^\prime$ of $\pi$.  Let $h \in [r]$.  As in Section \ref{320}, let $B_{h+1} =: L_1 \cup L_2 \cup ... \cup L_f$ decompose $B_{h+1}$ into clumps for some $f \geq 1$.  Restating the clump deleting condition in Section \ref{320}, we take $e \in [f]$ to be maximal such that $L_e \cap B_h \neq \emptyset$ and $B_{h+1} \backslash B_h \supseteq L_{e+1} \cup ... \cup L_f$.  The $s$ elements of $B_{h+1} \backslash B_h$ that are smaller than $m$ are in the clump $L_e$.  It is possible that some elements $m+1, m+2, ...$ from $B_{h+1} \backslash B_h$ are also in $L_e$.  Set $m^\prime := \max(L_e)$ and $s^\prime := | L_e \backslash B_h |$.  Since $\pi$ is $R$-increasing, when $s^\prime > s$ we have $\pi_{q_h + s + 1} = m+1, ... , \pi_{q_h+s^\prime} = m^\prime$ with $m^\prime - m = s^\prime - s$.  So then $\pi$ contains this staircase within the subinterval $(q_h+s, q_h + s^\prime]$ of $(q_h, q_{h+1}]$.  In any case we refer to the cohort $L_e \backslash B_h$ on $(q_h, q_h + s^\prime]$ as the (possibly empty) \emph{subclump} $L_e^\prime$ of $L_e$.

\begin{fact}With respect to the entities introduced above for $h \in [r]$:  Corresponding to the clumps $L_{e+1}, ... , L_f$ of $B_{h+1}$ there are respective staircases of $\pi$ within $(q_h, q_{h+1}]$.  When $s^\prime > s$ there is also a staircase of $\pi$ within $(q_h+s, q_h+s^\prime]$.  The supports of these staircases ``pave'' $(q_h+s,q_{h+1}]$.  An analogous statement with no subclump holds for $\pi$ on the first carrel $(0,q_1]$.  \end{fact}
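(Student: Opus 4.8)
The plan is to reduce the whole statement to one elementary fact about increasing sequences of integers: \emph{if a finite set $S\subseteq\mathbb{Z}$ is listed in increasing order, then the maximal runs of consecutive terms that increase by $1$ are exactly the clumps of $S$ taken in increasing order, and the supports of these runs partition the index set.} (This holds because, in an increasing listing, two consecutive terms differ by $1$ precisely when no integer lies strictly between them in $S$, while across a clump boundary the gap is at least $2$.) Granting this, proving the Fact amounts to reading off the clump decomposition of the cohort of $\pi$ on each carrel, which the $R$-rightmost clump deleting property of $B$ supplies.

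First I would recall the setup: by Proposition~\ref{prop320.2}(\ref{prop320.2.1}) the $R$-chain $B$ attached to the $R$-312-avoiding permutation $\pi$ is $R$-rightmost clump deleting, and the $h^{th}$ cohort of $\pi$ — the entries $\pi_{q_{h-1}+1},\dots,\pi_{q_h}$ — is exactly $B_h\backslash B_{h-1}$, written in increasing order since $\pi$ is $R$-increasing. Fix $h\in[r]$ and set $m:=\max(B_h)$. The elements of $B_{h+1}\backslash B_h$ lying below $m$ are automatically its smallest elements, so they occupy positions $q_h+1,\dots,q_h+s$, and the set $S:=\{x\in B_{h+1}\backslash B_h:x>m\}$ occupies positions $q_h+s+1,\dots,q_{h+1}$. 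Using $B_{h+1}\backslash B_h=(L_e\backslash B_h)\cup L_{e+1}\cup\dots\cup L_f$ together with $m'=\max(L_e)$, one gets $S=\{m+1,\dots,m'\}\cup L_{e+1}\cup\dots\cup L_f$, the first block being empty exactly when $s'=s$.

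Next I would check that this display really is the clump decomposition of $S$. Each $L_j$ with $e<j\le f$ is a clump of $B_{h+1}$ contained in $S\subseteq B_{h+1}$, hence is a clump of $S$; and when nonempty, $\{m+1,\dots,m'\}$ is a block of consecutive integers whose upper neighbour $m'+1$ lies outside $B_{h+1}$ (since $m'=\max L_e$ and $\min L_{e+1}\ge m'+2$) and whose lower neighbour $m$ lies outside $S$ (since $m\in B_h$), so it too is a clump of $S$; and consecutive blocks are separated by gaps of size at least $2$. Applying the elementary fact to $S$ in positions $q_h+s+1,\dots,q_{h+1}$ then yields the staircases of $\pi$ within $(q_h+s,q_{h+1}]$: the block $\{m+1,\dots,m'\}$ with support $(q_h+s,q_h+s']$ when $s'>s$, and the blocks $L_{e+1},\dots,L_f$ with their respective supports, with these supports paving $(q_h+s,q_{h+1}]$. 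Since position $q_h+s$ carries a value at most $m-1$ while position $q_h+s+1$ carries a value at least $m+1$, these are also staircases within the full carrel $(q_h,q_{h+1}]$. The first-carrel assertion is the degenerate case $B_0=\emptyset$: there is no old maximum $m$, no split clump $L_e'$, and $S=B_1=L_1\cup\dots\cup L_f$ fills all of $(0,q_1]$.

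The only real work will be this bookkeeping — confirming that the clump $L_e$ straddling $B_h$ contributes precisely the clean block $\{m+1,\dots,m'\}$ to $S$, that distinct blocks are genuinely separated by gaps of size at least $2$, and that the boundary at position $q_h+s$ does not fuse a staircase across it — together with a uniform treatment of the degenerate cases $s'=s$, $e=f$, and $h=r$ (where $B_{r+1}=[n]$ is a single clump, so there are no blocks $L_{e+1},\dots,L_f$). None of these is a genuine obstacle once the clump-deleting picture of Fact~\ref{fact320.3} is in hand.
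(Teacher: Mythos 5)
Your proposal is correct and follows essentially the same route as the paper, which states this Fact without a separate proof because it is meant to be read off from the preceding paragraph: the $(h+1)^{st}$ carrel of $\pi$ lists $B_{h+1} \backslash B_h$ in increasing order, and the $R$-rightmost clump deleting structure (Fact \ref{fact320.3}) makes the subclump $\{m+1,\dots,m'\}$ and the wholly new clumps $L_{e+1},\dots,L_f$ into maximal unit-step runs whose positions partition $(q_h+s,q_{h+1}]$. Your writeup simply makes that bookkeeping (gaps of size at least $2$ between blocks, no fusion across position $q_h+s$, the degenerate cases) explicit.
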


\begin{prop}\label{prop824.3}Suppose $\pi \in S_n^R$ is $R$-312-avoiding.  Let $\sigma^\prime$ be a 312-avoiding lift of $\pi$.  In terms of the entities above, this lift $\sigma^\prime$ may be obtained from the minimum length 312-avoiding lift $\sigma$ of $\pi$ as follows:  Let $h \in [r]$.  For each of the clumps $L_{e+1}, ... , L_f$ of $B_{h+1}$, its entries in $\sigma$ may be locally rearranged on its support in any 312-avoiding fashion when forming $\sigma^\prime$.  The entries for the subclump $L_e^\prime$ may be locally rearranged on $(q_h, q_h + s^\prime]$ in any 312-avoiding fashion provided that its entries less than $m$ remain in decreasing order.  The entries for each of the clumps of $B_1$ may be locally rearranged as for $L_{e+1}, ... , L_f$.  Conversely, any such rearrangement of the entries of $\sigma$ produces a 312-avoiding lift of $\pi$.  \end{prop}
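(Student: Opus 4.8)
The plan is to prove both directions by carrying out the analysis carrel-by-carrel, reducing everything to the behavior of 312-patterns within a single carrel $(q_h, q_{h+1}]$ and across the divider at $q_h$. First I would fix $h \in [r]$ and recall the ``paving'' picture from the Fact just above: on the right portion $(q_h + s, q_{h+1}]$ the entries of $\sigma$ (equivalently of $\pi$, since they agree there) are the increasing staircases attached to the clumps $L_{e+1}, \ldots, L_f$, while on $(q_h, q_h+s']$ the subclump $L_e'$ sits, with its first $s$ entries (those below $m := \max(B_h)$) listed in decreasing order and then, if $s' > s$, continuing as the staircase $m+1, \ldots, m'$. The key structural observation I would establish is a \emph{locality} lemma: because a 312-pattern $\sigma'_a > \sigma'_b < \sigma'_c$ with $\sigma'_a > \sigma'_c$ needs its ``small'' middle entry $\sigma'_b$ and its ``large'' left entry $\sigma'_a$, and because the clump structure partitions $[n]$ into intervals of consecutive integers with the new clumps $L_{e+1}, \ldots, L_f$ all lying \emph{above} $\max(B_h)$ and all earlier carrels contributing only entries $\le \max(B_h)$ together with the interleaving forced by $R$-312-avoidance, any 312-pattern in $\sigma'$ must have all three of its positions inside the support of a single clump (or single subclump). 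This is where the $R$-312-avoiding hypothesis on $\pi$ is essential: it is exactly what forbids a 312-pattern straddling the divider $q_h$ with $a \le q_h < b, c$, via Fact \ref{fact320.3} and Proposition \ref{prop320.2}(\ref{prop320.2.1}).

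Granting the locality lemma, the forward direction is immediate: if $\sigma'$ is any 312-avoiding lift of $\pi$, then restricted to the support of each clump $L_g$ (for $g > e$) and to the support of the subclump $L_e'$, the multiset of entries is forced — it is the clump $L_g$, respectively $L_e' = L_e \setminus B_h$ — because $\overline{\sigma'} = \pi$ fixes the cohort on each carrel and the paving pins down which entries land in which support interval. Hence $\sigma'$ is obtained from $\sigma$ by independently permuting the entries within each such support, and each such local rearrangement must itself be 312-avoiding (a 312-pattern confined to one support would be a 312-pattern of $\sigma'$). For the subclump $L_e'$ there is the extra constraint that its sub-entries below $m$ must stay in decreasing order: if two of them, say at positions $a < b$ with both in $(q_h, q_h+s]$, were placed in increasing order, then picking any entry $c > b$ in the same subclump support that exceeds $\sigma'_b$ — which exists precisely because $L_e'$ is a clump of consecutive integers and $\sigma'_a$ lies strictly between — would produce $\sigma'_a > \sigma'_b < \sigma'_c$ with $\sigma'_a > \sigma'_c$ or, if not, one instead checks against an entry from $B_h$ on an earlier carrel; I would verify this small case by hand. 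Conversely, for the ``any such rearrangement works'' claim, I take $\sigma$, apply arbitrary 312-avoiding local rearrangements subject to the stated decreasing-order proviso, and check that no 312-pattern is created: by the locality lemma any 312-pattern would be confined to a single clump/subclump support, but we rearranged 312-avoidingly there; and the decreasing-order proviso on the low part of $L_e'$ plus $R$-312-avoidance of $\pi$ handles precisely the interaction with $B_h$. The first carrel $(0, q_1]$ is the degenerate case with no divider to its left and no subclump, so the same argument applies with the subclump clause vacuous.

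The main obstacle I expect is proving the locality lemma cleanly — specifically, ruling out 312-patterns that span two different clumps within the same carrel, or that have left endpoint $a \le q_h$ in an earlier carrel and middle/right endpoints in $(q_h, q_{h+1}]$. For the within-carrel cross-clump case, the point is that distinct clumps $L_g, L_{g'}$ with $g < g'$ satisfy $\max L_g < \min L_{g'}$ and (after $q_h + s$) appear in $\sigma$ in increasing order of index within each and with $L_g$-support preceding $L_{g'}$-support or vice versa according to the $R$-increasing staircase paving; a careful case split on the relative order of the three positions against this paving shows a 312-pattern would already force a 312-pattern of $\pi$ confined to $(q_h, q_{h+1}]$, contradicting that $\pi$ is $R$-increasing (any subsequence within a single carrel is increasing, hence 312-free). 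For the divider-straddling case, this is exactly the content of $R$-312-avoidance: a pattern with $a \le q_h < b \le q_{h+1}$ (and $c$ somewhere to the right, possibly beyond $q_{h+1}$) would, after passing to the relevant pair of consecutive carrels and invoking Fact \ref{fact320.3}(\ref{fact320.3.2}) on $(b_{h+1}, m_h) \subset B_{h+1}$, contradict the $R$-rightmost-clump-deleting property of $B$, i.e. contradict $R$-312-avoidance of $\pi$ via Proposition \ref{prop320.2}(\ref{prop320.2.1}). Once the locality lemma is in hand, the rest is bookkeeping over the carrels.
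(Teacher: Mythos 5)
Your overall plan (forward: the supports are forced, each local arrangement must be 312-avoiding, and the sub-$m$ part of $L_e'$ must stay decreasing; converse: check no new 312 appears) matches the shape of the statement, but the central ``locality lemma'' on which you hang both directions is false as stated, and where it would be needed it is circular. For an arbitrary lift $\sigma'$ of an $R$-312-avoiding $\pi$, 312-patterns are \emph{not} confined to a single clump support: if the entries of $L_e'$ that are less than $m$ are placed in increasing order (which can easily be 312-avoiding inside the support, e.g.\ when there are only two of them), then $m=\max(B_h)$, sitting in an earlier carrel, together with those two entries is a 312-pattern straddling the divider. So the lemma only becomes true after the decreasing proviso is imposed -- it is the \emph{conclusion} of the converse direction, not a tool available in advance -- and your claim that the divider-straddling case is ``exactly the content of $R$-312-avoidance'' is wrong: $R$-312-avoidance (equivalently rightmost clump deletion, Fact \ref{fact320.3}) only excludes the subcase in which the middle-valued entry lies in a strictly later carrel than the small entry; the subcase with the small and middle entries both inside $(q_h,q_{h+1}]$ is arrangement-dependent and is precisely what the decreasing proviso has to kill. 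Relatedly, your necessity argument for the proviso is broken: two sub-$m$ entries placed in increasing order at positions $a<b$ satisfy $\sigma'_a<\sigma'_b$, so adjoining a larger entry $c$ of the same support produces a 123, not a 312; the correct (and only) witness is $m\in B_h$ in an earlier carrel, which you relegate to a ``small case to verify by hand''. (Also, $L_e'=L_e\setminus B_h$ is generally \emph{not} a set of consecutive integers, so the parenthetical justification there fails too.)

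The forward direction has a second genuine gap: you assert that in any 312-avoiding lift the entries of each clump occupy the same support ``because the paving pins down which entries land in which support interval'', but the paving describes $\sigma$ (equivalently $\pi$), not an arbitrary lift, whose carrel may a priori be any arrangement of the fixed cohort. The missing device is a gap element in a \emph{later} carrel: if an entry of a higher clump preceded an entry of a lower clump (or of $L_e'$) within the carrel, choose an integer $z$ strictly between the two clumps; $z\notin B_{h+1}$, so $z$ occurs to the right of the carrel, and the three positions form a 312 in $\sigma'$. This argument, which never appears in your write-up, is what forces the supports; its companion observation -- every integer lying between two members of one clump belongs to that clump, hence to $B_{h+1}$, so the middle entry of a same-support pattern cannot come from a later carrel -- together with (a) 312-avoidance of $\sigma$ (or $R$-312-avoidance of $\pi$) to exclude patterns whose three entries lie in three distinct carrels, and (b) the decreasing proviso to exclude patterns with the large entry in $B_h$ and the other two in the next carrel, is what makes the converse bookkeeping actually close. (For the record, the paper itself defers this proof to Section 7 of \cite{PW2}, so the comparison here is with correctness rather than with a printed argument; as written, your proposal does not yet constitute a proof.)
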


We will use the following result to derive Theorem 14.1 of \cite{PS} from our Theorem \ref{theorem737.1}(\ref{theorem737.1.2}):

\begin{prop}\label{prop824.4}Suppose $\pi \in S_n^R$ is $R$-312-avoiding.  Let $\sigma^\prime \in S_n$ be a 312-avoiding lift of $\pi$.  Then $\Delta_R[\Psi(\sigma^\prime)] = \Psi_R(\pi)$ and so $\pi = \Pi_R[\Delta_R[\Psi(\sigma^\prime)]]$.  \end{prop}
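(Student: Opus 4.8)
The plan is to reduce Proposition \ref{prop824.4} to the previously established Proposition \ref{prop824.2}, which handles the special case where $\sigma'$ is the \emph{minimum length} $312$-avoiding lift $\sigma$ of $\pi$. The key observation is that the composite map $\Delta_R \circ \Psi$ is insensitive to the local rearrangements described in Proposition \ref{prop824.3} that pass from $\sigma$ to a general $312$-avoiding lift $\sigma'$. Since Proposition \ref{prop824.2} already gives $\Delta_R[\Psi(\sigma)] = \Psi_R(\pi)$, it suffices to show $\Delta_R[\Psi(\sigma')] = \Delta_R[\Psi(\sigma)]$.

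First I would recall that by Fact \ref{fact604.2}, two upper $R$-tuples have the same image under $\Delta_R$ if and only if they have the same $R$-critical list. So the goal becomes: the $R$-tuples $\Psi(\sigma)$ and $\Psi(\sigma')$, viewed as $R$-tuples via the carrel structure imposed by $R$, have the same $R$-critical list. Now $\Psi = \Psi_{[n-1]}$ is the rank tuple map for the full case, so $\Psi(\sigma)_i = \mathrm{rank}^{\,i - i + 1}(\{\sigma_1,\dots,\sigma_i\}) = \max\{\sigma_1,\dots,\sigma_i\}$; that is, $\Psi(\sigma)$ is simply the running maximum sequence of $\sigma$. Hence what must be checked is that the running maximum sequences of $\sigma$ and $\sigma'$ have the same critical list relative to the dividers $q_1 < \dots < q_r$.

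The heart of the argument is then a local analysis on each carrel. By Proposition \ref{prop824.3}, the lift $\sigma'$ differs from $\sigma$ only by rearranging, within the support of each staircase (for the clumps $L_{e+1},\dots,L_f$ of $B_{h+1}$, and for the clumps of $B_1$) and within the subclump support $(q_h, q_h+s']$, subject to the constraint that the entries less than $m = \max(B_h)$ stay in decreasing order. I would argue that on the first carrel $(0,q_1]$, since $\Psi(\sigma)_{q_1} = \Psi(\sigma')_{q_1} = \max(B_1)$ and the running maxima only depend on which prefix sets $\{\sigma_1,\dots,\sigma_i\}$ arise, one checks directly that permuting entries within a clump's support does not change which indices are critical: within a clump the running max increases by exactly the step sizes dictated by the clump, and a rearrangement preserves the final value $\max$ of that clump's contribution while possibly changing intermediate running maxima only \emph{downward} at non-critical positions, which does not affect the critical-pair recursion $\upsilon_{x_{u-1}} - \upsilon_{x_u} > x_{u-1} - x_u$. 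For a carrel $(q_h, q_{h+1}]$ with $h \geq 1$: on the left side $(q_h, q_h+s]$ the entries are smaller than $m$ and hence contribute nothing to the running maximum (which stays at $m$ there in both $\sigma$ and $\sigma'$), so these positions are non-critical in both; on the right side $(q_h+s, q_{h+1}]$ the same clump-by-clump reasoning applies. The decreasing-order constraint on the sub-$m$ entries of the subclump is exactly what is needed to keep those positions non-critical and to keep the running max flat until the first entry exceeding $m$ appears. Thus $\Psi(\sigma')$ has the same $R$-critical list as $\Psi(\sigma)$, so $\Delta_R[\Psi(\sigma')] = \Delta_R[\Psi(\sigma)] = \Psi_R(\pi)$ by Proposition \ref{prop824.2}.

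Finally, applying $\Pi_R$ to both sides of $\Delta_R[\Psi(\sigma')] = \Psi_R(\pi)$ and using that $\Pi_R$ is the inverse of the restriction $\Psi_R|_{S_n^{R\text{-}312}}$ (Proposition \ref{prop320.2}(\ref{prop320.2.2})), together with $\pi \in S_n^{R\text{-}312}$, yields $\pi = \Pi_R[\Delta_R[\Psi(\sigma')]]$. The main obstacle I anticipate is the careful bookkeeping in the local carrel analysis: one must be sure that every type of rearrangement permitted by Proposition \ref{prop824.3} — in particular the interaction between the subclump $L_e'$ and the old maximum $m$, and the fact that rearranged entries may temporarily lower an intermediate running maximum — genuinely leaves the critical-pair recursion untouched. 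This is where invoking Fact \ref{fact604.2} to work purely at the level of critical lists, rather than comparing $\Delta_R$ outputs entrywise, should streamline the verification.
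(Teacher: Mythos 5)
Your overall strategy is reasonable --- reduce to the minimum-length lift via Proposition \ref{prop824.2}, show $\Delta_R[\Psi(\sigma^\prime)] = \Delta_R[\Psi(\sigma)]$ by comparing $R$-critical lists through Fact \ref{fact604.2}, and finish by applying $\Pi_R$ via Proposition \ref{prop320.2}(\ref{prop320.2.2}); the identification of $\Psi$ with the running-maximum sequence is also correct.  (The paper defers its own proof to Section 7 of \cite{PW2}, so I am judging your verification on its own terms.)  But the local carrel analysis, which carries the whole argument, contains two concrete errors.  First, it is false that the running maximum ``stays at $m$'' on $(q_h, q_h+s]$ for a general lift $\sigma^\prime$: when $s^\prime > s$, Proposition \ref{prop824.3} permits the subclump elements $m+1, \dots, m^\prime$ to be placed before some of the sub-$m$ elements.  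For example, with $n=6$, $R=\{3\}$, $\pi = (1,2,4;3,5,6)$, the lift $\sigma^\prime = (1,2,4,5,3,6)$ is 312-avoiding, and its running maximum at position $4$ is $5 > m = 4$.  Second, and more seriously, your monotonicity claim goes the wrong way: the minimum-length lift $\sigma$ places each clump's entries in increasing order, and the increasing arrangement minimizes the running maximum entrywise, so passing from $\sigma$ to any other allowed lift can only \emph{raise} intermediate running maxima, never lower them.  Your justification that the change is harmless (``changing intermediate running maxima only downward \dots does not affect the critical-pair recursion'') is also not a valid principle even hypothetically: lowering $\mu_x$ at a non-critical position makes the condition $\mu_{x_{u-1}} - \mu_x > x_{u-1} - x$ easier to satisfy and can create new critical pairs, so if the change really did go downward the argument would not close.

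The gap is repairable using the correct direction.  Since Proposition \ref{prop824.3} only rearranges entries within the subclump support $(q_h, q_h+s^\prime]$ and within the supports of the clumps $L_{e+1}, \dots, L_f$ (and of the clumps of $B_1$), the prefix value-sets of $\sigma$ and $\sigma^\prime$ agree at the right end of each such support; hence $\Psi(\sigma^\prime)$ equals $\Psi(\sigma)$ at those positions and is entrywise $\geq \Psi(\sigma)$ everywhere.  One then checks that the critical pairs of $\Psi(\sigma)$ in each carrel occur only at such right ends, and runs the critical-pair recursion for $\Psi(\sigma^\prime)$: at each stage the previously selected pair has an unchanged value, every position strictly between it and the next critical index of $\Psi(\sigma)$ has a weakly larger value and so still fails the gap condition, and the next critical index itself has an unchanged value and so still satisfies it.  Thus the two $R$-critical lists coincide, which is what Fact \ref{fact604.2} requires, and your remaining steps are fine.  (Alternatively, the entrywise description $\Delta_R(\upsilon)_i = \min\{\upsilon_j - (j-i) : j \in [i, q_h]\}$ for $i$ in the carrel $(q_{h-1}, q_h]$, which follows readily from the definition of $\Delta_R$, avoids critical pairs entirely: the minimum for $\Psi(\sigma^\prime)$ is at least that for $\Psi(\sigma)$ by the entrywise inequality, and at most that for $\Psi(\sigma)$ because the latter minimum is attained at a support right end, where the two tuples agree.)
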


\section{Further remarks}\label{824B}

Table 16.1 of \cite{PW2} summarizes our results concerning the equality of the polynomials and the identicality of the generating functions associated to our tableau sets; there $\lambda$ and $\lambda'$ are partitions.  The justifications for the entries in that table appear in the first paragraph of Section 16.

Let $\lambda$ be a partition.  The equivalence classes $\langle \beta \rangle_{\approx_\lambda}$ of tableau sets $\mathcal{S}_\lambda(\beta)$ for $\beta \in U_\lambda(n)$ can be indexed by the elements of $UI_\lambda(n)$ according to Proposition \ref{prop623.4}(\ref{prop623.4.1}).  We know that $| UI_\lambda(n) | = {n \choose R_\lambda} =: {n \choose \lambda}$.  The Demazure tableau sets $\mathcal{D}_\lambda(\pi)$ are indexed by $\pi \in S_n^\lambda$ by Fact \ref{fact420}(\ref{fact420.6}), and we know $| S_n^\lambda | = {n \choose R_\lambda} = {n \choose \lambda}$.  By Theorem \ref{theorem737.2} and Fact \ref{fact420}(\ref{fact420.6}), we see that there are ${n \choose \lambda} - C_n^\lambda$ row bound sum generating functions (and at most this many row bound sum polynomials) that cannot equal any Demazure polynomial.  These two statements also imply that there are ${n \choose \lambda} - C_n^\lambda$ Demazure polynomials that cannot be equal to row bound sums as polynomials.  Can this mysterious coincidence be explained by an underlying phenomenon?

\begin{prob}\label{prob16.1}Let $\lambda$ be a partition.  Set $J := [n-1] \backslash R_\lambda$.  Is there a non-$T$-equivariant deformation of the Schubert varieties in the $GL(n)$ flag manifold $G/P_J$ that bijectively moves the torus characters $d_\lambda(\pi;x)$ to the $s_\lambda(\alpha;x)$ for $\pi \in S_n^\lambda$ and $\alpha \in UI_\lambda(n)$ with exactly $C_n^R$ fixed points, namely $d_\lambda(\sigma;x) = s_\lambda(\gamma;x)$ for $\sigma \in S_n^{\lambda\text{-}312}, \gamma \in UG_\lambda(n)$, and $\Psi_\lambda(\sigma) = \gamma$?  \end{prob}

\noindent Finding such a bijection would give a negative answer to Problem \ref{prob14.5}, since it would rule out accidental polynomial equalities among all row bound sums.  To get started, first compute the dimensions $|\mathcal{D}_\lambda(\pi)|$ and $|\mathcal{S}_\lambda(\alpha)|$ for all $\pi \in S_n^\lambda$ and $\alpha \in UI_\lambda(n)$ for some small partition $\lambda$.  Use this data to propose a guiding bijection from $UI_\lambda(n)$ to $S_n^\lambda$ that extends our bijection $\Pi_\lambda : UG_\lambda(n) \rightarrow S_n^{\lambda\text{-}312}$.

Working with an infinite number of variables $x_1, x_2, ...$, Reiner and Shimozono studied \cite{RS} coincidences between ``skew'' flag Schur polynomials and Demazure polynomials in their Theorems 23 and 25.  To indicate how those statements are related to our results, we consider only their ``non-skew'' flag Schur polynomials and specialize those results to having just $n$ variables $x_1, ... , x_n$.  Then their key polynomials $\kappa_\alpha(x)$ are indexed by ``(weak) compositions $\alpha$ (into $n$ parts)''.  The bijection from our pairs $(\lambda, \pi)$ with $\lambda$ a partition and $\pi \in S_n^\lambda$ to their compositions $\alpha$ that was noted in Section 3 of \cite{PW1} is indicated in the sixth paragraph of the Appendix to that paper:  Let $\pi \in S_n^\lambda$.  After creating $\alpha$ via $\alpha_{\pi_i} := \lambda_i$ for $i \in [n]$, here we write $\pi.\lambda := \alpha$.  Under this bijection the Demazure polynomial $d_\lambda(\pi;x)$ of \cite{PW1} and their key polynomial $\kappa_\alpha(x)$ are defined by the same recursion.  Reiner and Shimozono characterized the coincidences between the $s_\lambda(\phi;x)$ for $\phi \in UF_\lambda(n)$ and the $d_{\lambda'}(\pi;x)$ for $\pi \in S_n^{\lambda'}$ from the perspectives of both the flag Schur polynomials and the Demazure polynomials.  To relate the index $\phi$ to the index $\pi \in S_n^{\lambda'}$, their theorems refer to the tableau we denote $Q_\lambda(\phi)$.  Part (i) of the following fact extends the sixth paragraph of the Appendix of \cite{PW1}.  Part (ii) can be confirmed with Proposition \ref{prop623.8}(\ref{prop623.8.2}), Proposition \ref{prop604.4}(\ref{prop604.4.1}) and Lemma \ref{lemma340.1}.

\begin{fact}\label{fact824B}Let $\pi \in S_n^\lambda$.  Let $\phi \in UF_\lambda(n)$. \begin{enumerate}[(i)]\setlength\itemsep{-.5em}

\item The content $\Theta[Y_\lambda(\pi)]$ of the key of $\pi$ is the composition that has the unique \\ decomposition $\pi.\lambda$. \label{fact824B.1}

\item The tableau $Q_\lambda(\phi)$ is a $\lambda$-key $Y_\lambda(\sigma)$ for a uniquely determined $\sigma \in S_n^\lambda$. \label{fact824B.2} \end{enumerate} \end{fact}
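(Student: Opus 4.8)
The plan is to handle the two parts separately: Part (ii) is a short chaining of results already proved, whereas Part (i) needs a direct computation from the $\lambda$-key construction of Section \ref{800}.

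For Part (i), I would compute $\Theta[Y_\lambda(\pi)]$ straight from the recipe $\pi \mapsto B \mapsto Y_\lambda(\pi)$. Write $R := R_\lambda$ and let $B$ be the $R$-chain of $\pi$, so $B_h = \{\pi_1, \dots, \pi_{q_h}\}$ for $h \in [r]$, with $B_0 := \emptyset$ and $B_{r+1} := [n]$. Recall that $Y_\lambda(\pi)$ is the left-to-right juxtaposition of $\lambda_n = \lambda_{q_{r+1}}$ inert columns $Y([n])$ and, for each $h \in [r]$, of $\lambda_{q_h} - \lambda_{q_{h+1}}$ copies of the column $Y(B_h)$. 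Since the entries of any one column are distinct, a value $v \in [n]$ contributes $1$ to the content for each column whose entry set contains it: an inert column always does, while a copy of $Y(B_h)$ does precisely when $v \in B_h$. Hence
\[
\Theta[Y_\lambda(\pi)]_v \;=\; \lambda_n + \sum_{h \,:\, v \in B_h} \bigl( \lambda_{q_h} - \lambda_{q_{h+1}} \bigr) .
\]
I would then fix $i \in [n]$, put $v := \pi_i$, and let $h_0 \in [r+1]$ be the carrel index of $i$, i.e.\ $q_{h_0-1} < i \le q_{h_0}$. As $\pi$ is a permutation, $v \in B_h \iff i \le q_h \iff h \ge h_0$, so the sum telescopes to $\lambda_{q_{h_0}} - \lambda_{q_{r+1}} = \lambda_{q_{h_0}} - \lambda_n$ (with the empty-sum case $h_0 = r+1$ reproducing $\lambda_n$), giving $\Theta[Y_\lambda(\pi)]_{\pi_i} = \lambda_{q_{h_0}} = \lambda_i$, the final equality holding because the row lengths of $\lambda$ are constant on each carrel. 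Since $\Theta[Y_\lambda(\pi)]_{\pi_i} = \lambda_i$ for all $i$ is exactly the defining relation of the composition $\pi.\lambda$, we conclude $\Theta[Y_\lambda(\pi)] = \pi.\lambda$. The only delicate point is keeping the row-index carrels aligned with the column multiplicities, so I do not anticipate a genuine obstacle here.

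For Part (ii), I would chain the three cited results. Applying Proposition \ref{prop623.8}(\ref{prop623.8.2}) with $\beta := \phi$ gives $Q_\lambda(\phi) = M_\lambda(\Delta_\lambda(\phi))$. By Proposition \ref{prop604.4}(\ref{prop604.4.1}) the $\lambda$-core $\gamma := \Delta_\lambda(\phi)$ of the upper flag $\phi$ is a gapless $\lambda$-tuple, and then Lemma \ref{lemma340.1} says that $M_\lambda(\gamma)$ is a $\lambda$-key. Hence $Q_\lambda(\phi)$ is a $\lambda$-key; since the composite $\pi \mapsto B \mapsto Y$ of Section \ref{800} is a bijection from $S_n^\lambda$ onto the set of $\lambda$-keys, there is a unique $\sigma \in S_n^\lambda$ with $Q_\lambda(\phi) = Y_\lambda(\sigma)$. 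One can further identify $\sigma$: because $\gamma$ is gapless, Proposition \ref{prop320.2}(\ref{prop320.2.2}) gives $\sigma := \Pi_\lambda(\gamma) \in S_n^{\lambda\text{-}312}$ with $\Psi_\lambda(\sigma) = \gamma$, and then Theorem \ref{theorem340}(\ref{theorem340.2}) yields $M_\lambda(\gamma) = Y_\lambda(\sigma)$, so $\sigma = \Pi_\lambda[\Delta_\lambda(\phi)]$ and it is $\lambda$-312-avoiding. Since Part (ii) is pure assembly of quoted statements, the bookkeeping in Part (i) is the only place where any real work occurs.
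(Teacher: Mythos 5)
Your proposal is correct and follows the paper's intended route: the paper justifies Part (ii) exactly by the chain Proposition \ref{prop623.8}(\ref{prop623.8.2}), Proposition \ref{prop604.4}(\ref{prop604.4.1}), and Lemma \ref{lemma340.1} together with the bijection $\pi \mapsto B \mapsto Y$, which is what you assemble. For Part (i) the paper merely points to the Appendix of \cite{PW1}, and your telescoping computation of $\Theta[Y_\lambda(\pi)]_{\pi_i} = \lambda_{q_{h_0}} = \lambda_i$ from the column multiplicities $\lambda_{q_h}-\lambda_{q_{h+1}}$ is the straightforward verification that citation stands in for, so there is nothing to flag.
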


From the perspective of flag Schur polynomials, in our language their Theorem 23 first said that every $s_\lambda(\phi;x)$ arises as a $d_{\lambda'}(\pi;x)$ for at least one pair $(\lambda', \pi)$ with $\lambda'$ a partition and $\pi \in S_n^{\lambda'}$.  Second, that $d_{\lambda'}(\pi;x)$ must be the Demazure polynomial for which $\pi.\lambda' = \Theta[Q_\lambda(\phi)]$.  Their first statement appears here as a weaker form of the first part of Theorem \ref{theorem737.1}(\ref{theorem737.1.3}).  The fact above can be used to show that their second (uniqueness) claim is equivalent to the first (and central) ``necessary'' claim $Q_\lambda(\phi) = Y_{\lambda'}(\pi)$ in our Theorem \ref{theorem737.2} that is produced by taking $\beta := \phi$.

From the other perspective, their Theorem 25 put forward a characterization for a Demazure polynomial $d_\lambda(\pi;x)$ that arises as a flag Schur polynomial $s_{\lambda'}(\phi;x)$ for some $\phi \in UF_{\lambda'}(n)$.  This characterization is stated in terms of a flag $\phi(\alpha)$ that is specified by a recipe to be applied to a composition $\alpha$;  this is given before the statement of Theorem 25.  Let $(\lambda, \pi)$ be the pair corresponding to $\alpha$.  It appears that the recipe for $\phi(\alpha)$ should have ended with `having size $\lambda_i$' instead of `having size $\alpha_i$'; we take this fix for granted for the remainder of the discussion.  But no recipe of this form can be completely useful for general partitions $\lambda$ since any $\lambda$-tuple $\phi(\alpha)$ produced will be constant on the carrels of $[n]$ determined by $\lambda$.  So first we consider only strict $\lambda$.  Here it can be seen that their $\phi(\alpha)$ becomes our $\Psi(\pi) =: \psi$.  Thus their $T_{\lambda(\alpha),\phi(\alpha)}$ is our $Q(\psi)$, and so the condition $Key(\alpha) = T_{\lambda(\alpha),\phi(\alpha)}$ translates to $Y(\pi) = Q(\psi)$.  Following the statement of Theorem \ref{theorem340}, we noted that the converse of its Part (ii) held when $\lambda$ is strict.  Using Proposition \ref{prop623.8}(\ref{prop623.8.2}), Theorem \ref{theorem340}(\ref{theorem340.2}), and that fact we see that this $Y(\pi) = Q(\psi)$ condition is equivalent to requiring $\pi \in S_n^{312}$.  So when $\lambda$ is strict the two directions of Theorem 25 appear in this paper as parts of Theorem \ref{theorem737.1}(\ref{theorem737.1.2}) and Theorem \ref{theorem737.2}.

Now consider Theorem 25 for general $\lambda$.  Its hypothesis $\kappa(\alpha) = S_{\lambda / \mu}(X_\phi)$ translates to $d_\lambda(\pi;x) = s_{\lambda'}(\phi;x)$.  In the necessary direction a counterexample to their condition $Key(\alpha) = T_{\lambda(\alpha),\phi(\alpha)}$, which translates to $Y_\lambda(\pi) = Q_\lambda(\phi(\alpha))$, is given by $\alpha = (1,2,0,1)$.  Their condition can be ``loosened up'' by replacing `$T_{\lambda(\alpha),\phi(\alpha)}$' with `$T_{\lambda',\phi}$', which translates to $Q_{\lambda'}(\phi)$.  This repaired version now gives the necessary condition $Y_\lambda(\pi) = Q_{\lambda'}(\phi)$, which is the central claim of Theorem \ref{theorem737.2}.  Turning to the sufficient direction:  From looking at the $\lambda$-row end list of $Y_\lambda(\pi)$, it can be seen that the $nn$-tuple $\phi(\alpha) =: \phi$ is in $UF_\lambda(n) \subseteq UGC_\lambda(n)$ as well as being constant on the carrels of $\lambda$.  Suppose that their condition $Y_\lambda(\pi) = Q_\lambda(\phi)$ is satisfied, and set $\Delta_\lambda(\phi) =: \gamma \in UG_\lambda(n)$.  Then $Y_\lambda(\pi) = Q_\lambda(\gamma)$, and Theorem \ref{theorem340} gives $\pi \in S_n^{\lambda\text{-}312}$.  Then Theorem \ref{theorem737.1}(\ref{theorem737.1.2}) implies that $d_\lambda(\pi;x) = s_\lambda(\phi;x)$, which confirms this part of Theorem 25.  However, the set of cases $(\lambda,\pi)$ that are produced by this sufficient condition is smaller than that produced by the $\lambda$-312-avoiding sufficient condition:  It can be seen that each index $\gamma$ produced above has only a single critical entry in each carrel of $\lambda$, while the general indexes $\gamma'$ that can arise for such coincidences range over all of the larger set $UG_\lambda(n)$.  Further remarks appear in the fifth paragraph of Section 15 of \cite{PW2}.

We prepare to discuss a related result \cite{PS} of Postnikov and Stanley.  Let $\pi \in S_n^\lambda$.  Our definitions of the $\lambda$-chain $B$ and the $\lambda$-key $Y_\lambda(\pi)$ can be extended to all of $S_n$ so that $Y_\lambda(\sigma') = Y_\lambda(\pi)$ exactly for the $\sigma' \in S_n$ such that $\bar{\sigma'} = \pi$.  Then our definition of Demazure polynomial can be extended from $S_n^\lambda$ to $S_n$ such that $d_\lambda(\sigma';x) = d_\lambda(\pi;x)$ for exactly the same $\sigma'$.  Their paper used this ``looser'' indexing for the Demazure polynomials.

In their Theorem 14.1, Postnikov and Stanley stated a sufficient condition for a coincidence to occur from the perspective of Demazure polynomials:  If $\pi \in S_n$ is 312-avoiding, then $d_\lambda(\pi;x) = s_\lambda(\phi;x)$ for a certain $\phi \in UF_\lambda(n)$.  After noting that this theorem followed from Theorem 20 of \cite{RS}, they provided their own proof of it.  Their bijective recipe for forming $\phi$ from $\pi$ was complicated.  Their inverse for this bijection is our inverse map $\Pi$ of Proposition 6.3(ii) of \cite{PW2}, which is the full $R = [n-1]$ case of Proposition \ref{prop320.2}(\ref{prop320.2}) here.  This map takes upper flags to 312-avoiding permutations.  Since the inverse of the inverse of a bijection must be the bijection, from that proposition it follows that their recipe for $\phi$ must be the restriction of our $\Psi$ to $S_n^{312}$.  The following result uses the machinery provided by our maps of $n$-tuples in Propositions \ref{prop824.2} and \ref{prop824.4} to prove that their theorem is equivalent to a weaker version of one of ours:

\begin{thm}\label{theorem824.5}Theorem 14.1 of \cite{PS} is equivalent to our Theorem \ref{theorem737.1}(\ref{theorem737.1.2}), once `$\equiv$' in the latter result has been replaced by `$=$'.  \end{thm}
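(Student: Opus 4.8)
The plan is to rewrite both statements in the common language of row bound sums $s_\lambda(\cdot;x)$ and Demazure polynomials $d_\lambda(\cdot;x)$ indexed by $R_\lambda$-permutations, and then to pass between them using the lifting results of this section (Propositions \ref{prop824.1}, \ref{prop824.2}, \ref{prop824.4}) together with the fact, from Proposition \ref{prop623.4}(\ref{prop623.4.1}), that a row bound sum depends only on the $\lambda$-core of its index.

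First I would fix the dictionary. By the paragraphs preceding the statement, Theorem 14.1 of \cite{PS} asserts, in our notation: for every $\sigma \in S_n^{312}$ one has $d_\lambda(\sigma;x) = s_\lambda(\Psi(\sigma);x)$, where the Demazure polynomial is read with the loose indexing (so $d_\lambda(\sigma;x) = d_\lambda(\bar\sigma;x)$ and $\bar\sigma \in S_n^{\lambda\text{-}312}$) and where $\Psi(\sigma) \in UF(n) \subseteq UF_\lambda(n)$ is the flag appearing there, identified above as the restriction of $\Psi$ to $S_n^{312}$. I would then isolate the ``bridge'' identity: if $\sigma \in S_n^{312}$ is any $312$-avoiding lift of $\pi := \bar\sigma \in S_n^{\lambda\text{-}312}$, then $\Delta_\lambda[\Psi(\sigma)] = \Psi_\lambda(\pi)$ (Proposition \ref{prop824.4}, or Proposition \ref{prop824.2} when $\sigma$ is the minimum length lift); since $\Psi_\lambda(\pi) \in UG_\lambda(n) \subseteq UI_\lambda(n)$ is fixed by $\Delta_\lambda$ (Fact \ref{fact604.2}), Proposition \ref{prop623.4}(\ref{prop623.4.1}) gives $\mathcal{S}_\lambda(\Psi(\sigma)) = \mathcal{S}_\lambda(\Psi_\lambda(\pi))$, hence $s_\lambda(\Psi(\sigma);x) \equiv s_\lambda(\Psi_\lambda(\pi);x)$.

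For the implication ``Theorem \ref{theorem737.1}(\ref{theorem737.1.2}), with $\equiv$ weakened to $=$, implies Theorem 14.1 of \cite{PS}'', I would take an arbitrary $\sigma \in S_n^{312}$, put $\pi := \bar\sigma$, and use Proposition \ref{prop824.4} with the lift $\sigma$: the hypothesis gives $d_\lambda(\pi;x) = s_\lambda(\Psi_\lambda(\pi);x)$, the loose indexing gives $d_\lambda(\sigma;x) = d_\lambda(\pi;x)$, and the bridge identity gives $s_\lambda(\Psi_\lambda(\pi);x) \equiv s_\lambda(\Psi(\sigma);x)$; chaining these yields $d_\lambda(\sigma;x) = s_\lambda(\Psi(\sigma);x)$. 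Conversely, assuming Theorem 14.1 of \cite{PS}, I would take any $\pi \in S_n^{\lambda\text{-}312}$, let $\sigma$ be the minimum length $312$-avoiding lift furnished by Proposition \ref{prop824.1} (so $\bar\sigma = \pi$), apply the hypothesis to $\sigma$ to get $d_\lambda(\sigma;x) = s_\lambda(\Psi(\sigma);x)$, and then invoke the loose indexing and the bridge identity (via Proposition \ref{prop824.2}) to obtain $d_\lambda(\pi;x) = d_\lambda(\sigma;x) = s_\lambda(\Psi(\sigma);x) = s_\lambda(\Psi_\lambda(\pi);x)$ --- which is Theorem \ref{theorem737.1}(\ref{theorem737.1.2}) with $=$ in place of $\equiv$. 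This route recovers only the polynomial equality, not the identicality of generating functions, precisely because Theorem 14.1 of \cite{PS} itself is stated with $=$; that is the caveat built into the statement of Theorem \ref{theorem824.5}.

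I expect the main obstacle to be the careful bookkeeping of Postnikov and Stanley's loose indexing: one must be sure that the flag used in their Theorem 14.1 really is $\Psi(\sigma)$, that it lies in $UF_\lambda(n)$ so that $s_\lambda(\Psi(\sigma);x)$ is a genuine flag Schur polynomial of shape $\lambda$, and that their identity $d_\lambda(\sigma;x) = d_\lambda(\bar\sigma;x)$ matches our $d_\lambda(\bar\sigma;x)$ with $\bar\sigma \in S_n^\lambda$. All of this was arranged in the discussion preceding the statement, so once the dictionary is in place the two implications are short chains of equalities; and there is no circularity, as Propositions \ref{prop824.1}, \ref{prop824.2}, \ref{prop824.4}, and \ref{prop623.4} are proved independently of both Theorem \ref{theorem737.1} and Theorem 14.1 of \cite{PS}.
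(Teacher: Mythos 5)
Your proposal is correct and follows essentially the route the paper intends: the paper defers the details to Section 15 of \cite{PW2}, but it explicitly flags Proposition \ref{prop824.2} as the tool for deriving the weakened Theorem \ref{theorem737.1}(\ref{theorem737.1.2}) from Theorem 14.1 of \cite{PS} and Proposition \ref{prop824.4} for the converse direction, together with the dictionary identifying their flag recipe with $\Psi|_{S_n^{312}}$ and the loose indexing $d_\lambda(\sigma;x)=d_\lambda(\bar\sigma;x)$ --- exactly the bridge identities and chains of equalities you use. Your handling of the $\Delta_\lambda$-invariance of $\Psi_\lambda(\pi)$ via Fact \ref{fact604.2} and Proposition \ref{prop623.4}(\ref{prop623.4.1}), and your caveat that only polynomial equality (not `$\equiv$') transfers, match the paper's framing.
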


\noindent The proof is in Section 15 of \cite{PW2}.  To convert their index $\sigma' \in S_n^{312}$ for a Demazure polynomial to an index for a flag Schur polynomial, Postnikov and Stanley set $\varphi' := \Psi(\sigma')$.  For one such $\sigma'$, our unique corresponding element of $S_n^{\lambda\text{-}312}$ is $\pi := \bar{\sigma'}$.  Let $\sigma$ be the minimum length $\lambda$-312-avoiding lift of $\pi$, and let $\sigma''$ be any other such lift.  We work with $\Psi_\lambda(\pi) =: \gamma \in UG_\lambda(n)$ and $s_\lambda(\gamma;x)$.  As they apply $\Psi$ to various $\sigma''$, they produce various upper flags $\varphi''$.  By Proposition \ref{prop824.4} we see $\varphi'' \sim \gamma$.  By Proposition \ref{prop824.2} it can be seen that our ``favored'' $\Psi(\sigma)$ is the $\lambda$-floor flag $\Phi_\lambda[\gamma] =: \tau$ that is the unique minimal upper flag such that $s_\lambda(\tau;x) = s_\lambda(\varphi'';x) = d_\lambda(\sigma'';x)$.

Gelfand-Zetlin (GZ) patterns are alternate forms of tableaux.  Kogan used certain subsets of these, which formed integral convex polytopes, to describe some Demazure polynomials.  In our predecessor paper \cite{PW3} that obtained the convexity Theorem \ref{theorem520}, while making some geometric remarks we noted that in Corollary 15.2 of \cite{PS} Postnikov and Stanley had shown that the Demazure polynomials described by Kogan are the 312-avoiding Demazure polynomials.  For strict $\lambda$, Kiritchenko, Smirnov, and Timorin vastly generalize that result in Theorem 1.2 of \cite{KST} by expressing any Demazure polynomial as the sum of the generating functions for the integral points lying on certain faces of the GZ polytope for $\lambda$.  In their introduction those authors note that only one face is used for this exactly when the indexing permutation is 132-avoiding; this is their equivalent to our 312-avoiding after taking into account their conjugation by $w_0$.  This observation led the referee for this paper to ask if the integral convexity phenomenon of our Theorem \ref{theorem520} is paralleled by an integral convexity phenomenon for GZ patterns.  We do not know of any relationship between integral convexity for a general set of semistandard tableaux and the integral convexity of the set of the corresponding GZ patterns.  Once one has descriptions of the set of tableaux for a flag Schur polynomials and of Kogan's set of patterns for a 312-avoiding permutation, it is easy to directly see that both form integral convex polytopes.  As for the converse convexity question for the sets of patterns used for general permutations, note that it is possible for a nontrivial union of faces of a ``small'' GZ polytope to be integrally convex:  Consider the GZ polytope for $\lambda = (1,2,3)$ shown in Figure 1 of \cite{KST}.  The Demazure polynomial for $w := s_2$ is obtained by summing over the integral points in the faces for $y=2$ and $x=z$.  This permutation is 132, which is not 132-avoiding.  The integral convex hull of the union of these two faces is the union itself.  So here the set of GZ patterns employed to compute a Demazure polynomial for a permutation is convex even though the permutation is not 132-avoiding.  But the desired converse is obtained by considering $\lambda' = m\lambda$ for $m$ sufficiently large, which is common in this area in algebraic geometry.  One can also conclude for large $m$ that the only candidates for coincidences with flag Schur polynomials are the 132-avoiding Demazure polynomials.

In Theorem 2.7.1 of \cite{St1}, Stanley used the Gessel-Viennot technique to give a determinant expression for a generating function for certain sets of $n$-tuples of non-intersecting lattice paths.  Then in his proof of Theorem 7.16.1 of \cite{St2}, he recast that generating function for some cases by viewing such $n$-tuples of lattice paths as tableaux.  After restricting to non-skew shapes and to a finite number of variables, his generating function becomes our row bound sum $s_\lambda(\beta;x)$ for certain $\beta \in U_\lambda(n)$.  Theorem 2.7.1 required that the pair $(\lambda, \beta)$ satisfies the requirement that Gessel and Viennot call \cite{GV} ``nonpermutable''.  There he noted that $(\lambda, \phi)$ is nonpermutable for every $\phi \in UF_\lambda(n)$;  this implicitly posed the problem of characterizing all $\beta \in U_\lambda(n)$ for which $(\lambda, \beta)$ is nonpermutable.  The ceiling map $\Xi_\lambda: UG_\lambda(n) \longrightarrow UF_\lambda(n)$ defined in Sections \ref{608} and \ref{604} can be extended to all of $U_\lambda(n)$.  For a given partition $\lambda$, the main result of \cite{PW4} says that $\beta \in U_\lambda(n)$ gives a nonpermutable $(\lambda, \beta)$ if and only if $\beta \in UGC_\lambda(n)$ and $\beta \leq \Xi_\lambda(\beta)$.  Hence we will again see that restricting consideration from all upper $\lambda$-tuples $\beta \in U_\lambda(n)$ down to at least the gapless core $\lambda$-tuples $\eta \in UGC_\lambda(n)$ enables saying something nice about the row bound sums $s_\lambda(\eta;x)$.  To compute $s_\lambda(\eta;x)$ for a given $\eta \in UGC_\lambda(n)$, the possible inputs for the Gessel-Viennot determinant are the $\eta' \in UGC_\lambda(n)$ such that $\eta' \sim \eta$ and $\eta' \leq \Xi_\lambda(\eta')$.  We will say that a particular such $\lambda$-tuple \emph{attains maximum efficiency} if the corresponding determinant has fewer total monomials among its entries than does the determinant for any other application of Theorem 2.7.1 to a $\beta \in U_\lambda(n)$ that produces $s_\lambda(\eta;x)$.  In \cite{PW4} we show that the gapless $\lambda$-tuple $\Delta_\lambda(\eta)$ attains maximum efficiency.

\section{Parabolic Catalan counts}\label{1800}

In the last section of our predecessor paper \cite{PW3} we listed a half-dozen ways in which the parabolic Catalan numbers $C_n^R$ arose in that paper, including as the number of gapless $R$-tuples.  Then we described how these numbers had recently independently arisen in the literature and in the Online Encyclopedia of Integer Sequences.  The following result lists several further occurrences of these numbers here; the section (or paper) cited at the beginning of each item points to the definition of the concept:

\begin{thm}\label{theorem18.1}Let $R \subseteq [n-1]$.  Write the elements of $R$ as $q_1 < q_2 < ... < q_r$.  Set $q_0 := 0$ and $q_{r+1} := n$.  Let $\lambda$ be a partition $\lambda_1 \geq \lambda_2 \geq ... \geq \lambda_n \geq 0$ whose shape has the distinct column lengths $q_r, q_{r-1}, ... , q_1$.  Set $p_h := q_h - q_{h-1}$ for $1 \leq h \leq r+1$.  The number $C_n^R =: C_n^\lambda$ of $R$-312-avoiding permutations is also equal to the number of:

\begin{enumerate}[(i)]\setlength\itemsep{-.5em}

\item Section \ref{300}:  flag $R$-critical lists. \label{theorem18.1.1}

\item Section \ref{300}:  $R$-canopy tuples $\kappa$, $R$-floor flags $\tau \in UFlr_R(n)$, $R$-ceiling flags $\xi \in UCeil_R(n)$. \label{theorem18.1.2}

\item Sections \ref{608} and \ref{623}:  the four collections of equivalence classes in $UGC_R(n) \supseteq UF_R(n)$ (or $UGC_\lambda(n)$ $\supseteq UF_\lambda(n)$) that are defined by the equivalence relations $\sim_R$ (or $\approx_\lambda$) respectively. \label{theorem18.1.3}

\item Section \ref{737}:  Demazure polynomials $d_\lambda(\pi;x)$ indexed by $\pi \in S_n^{\lambda\text{-}312}$, gapless core Schur polynomials $s_\lambda(\eta;x)$ for $\eta \in UGC_\lambda(n)$ that are distinct as polynomials, and flag Schur polynomials $s_\lambda(\phi;x)$ for $\phi \in UF_\lambda(n)$ that are distinct as polynomials.\label{theorem18.1.4}

\item Sections \ref{623}, \ref{737}:  Coincident pairs ($\mathcal{S}_\lambda(\beta), \mathcal{D}_\lambda(\pi)$) of sets of tableaux of shape $\lambda$ and coincident pairs ($s_\lambda(\beta;x), d_\lambda(\pi;x)$) of polynomials indexed by upper $\lambda$-tuples $\beta \in U_\lambda(n)$ and $\lambda$-permutations $\pi \in S_n^\lambda$. \label{theorem18.1.5}

\item Section \ref{824B}:  valid upper $\lambda$-tuple inputs to the Gessel-Viennot determinant expressions for flag Schur polynomials on the shape $\lambda$ that attain maximum efficiency. \label{theorem18.1.6} \end{enumerate} \end{thm}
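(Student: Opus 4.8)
The plan is to reduce every item to the single anchoring fact that $|UG_R(n)| = C_n^R$, which holds by Proposition \ref{prop320.2}(\ref{prop320.2.2}): the rank $R$-tuple map $\Psi_R$ restricts to a bijection from $S_n^{R\text{-}312}$ onto $UG_R(n)$, with inverse $\Pi_R$. After this reduction the theorem is an exercise in assembling bijections and precise-indexing statements already proved in Sections \ref{604}, \ref{608}, \ref{721}, and \ref{737}, together with the forthcoming result of \cite{PW4} for item (\ref{theorem18.1.6}). For items (\ref{theorem18.1.1}) and (\ref{theorem18.1.2}): Corollary \ref{cor604.8} gives bijections from the set of flag $R$-critical lists onto each of the sets of gapless $R$-tuples, $R$-canopy tuples, $R$-floor flags, and $R$-ceiling flags (the floor/ceiling bijections being re-confirmed by Proposition \ref{prop608.10}), so composing with $\Pi_R$ settles all of these at once. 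For item (\ref{theorem18.1.3}): by Proposition \ref{prop623.2} the relations $\approx_\lambda$ and $\sim_{R_\lambda}$ agree on both $UGC_{R_\lambda}(n)$ and $UF_{R_\lambda}(n)$, so the four collections of equivalence classes reduce to two, and Corollary \ref{cor608.6}(\ref{cor608.6.2}) and (\ref{cor608.6.3}) show that the classes in $UGC_R(n)$ and the classes in $UF_R(n)$ are both precisely indexed by the flag $R$-critical lists; hence by item (\ref{theorem18.1.1}) they number $C_n^R$.

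For item (\ref{theorem18.1.4}): the Demazure polynomials $d_\lambda(\pi;x)$ with $\pi \in S_n^{\lambda\text{-}312}$ are pairwise distinct as polynomials by Proposition \ref{prop737}(\ref{prop737.2}), so there are $|S_n^{\lambda\text{-}312}| = C_n^\lambda$ of them; Theorem \ref{theorem737.1} identifies the gapless core (respectively flag) Schur polynomials with the $\lambda$-312-avoiding Demazure polynomials and conversely, while Corollary \ref{newcor737}(\ref{newcor737.1}) and (\ref{newcor737.3}) show that equality as polynomials among gapless core (respectively flag) Schur polynomials forces $\sim$-equivalence of indexes; so the distinct such polynomials are in bijection with the $\sim$-classes counted in item (\ref{theorem18.1.3}). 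For item (\ref{theorem18.1.5}): a set of tableaux of shape $\lambda$ that occurs both as some $\mathcal{S}_\lambda(\beta)$ and as some $\mathcal{D}_\lambda(\pi)$ forces $\pi \in S_n^{\lambda\text{-}312}$, with $\pi$ then uniquely determined by Theorem \ref{theorem721}(\ref{theorem721.3}) and Fact \ref{fact420}(\ref{fact420.6}); conversely every $\mathcal{D}_\lambda(\pi)$ with $\pi \in S_n^{\lambda\text{-}312}$ arises this way by Theorem \ref{theorem721}(\ref{theorem721.2}). Since the two components of a coincident pair are by definition the same set, counting the coincident pairs is the same as counting the distinct tableau sets lying in both families, which are in bijection with $S_n^{\lambda\text{-}312}$; Theorem \ref{theorem721}(\ref{theorem721.4}) repackages this through $UG_\lambda(n)$. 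The polynomial version is identical, using Theorems \ref{theorem737.1} and \ref{theorem737.2} in place of Theorem \ref{theorem721}.

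For item (\ref{theorem18.1.6}): as recalled in Section \ref{824B}, every flag Schur polynomial on $\lambda$ is a gapless core Schur polynomial $s_\lambda(\eta;x)$, and the main result of \cite{PW4} says that among the valid (nonpermutable) inputs $\eta' \in UGC_\lambda(n)$ with $\eta' \sim \eta$ and $\eta' \le \Xi_\lambda(\eta')$, it is precisely the gapless $\lambda$-tuple $\Delta_\lambda(\eta)$ that attains maximum efficiency; so the maximum-efficiency inputs are exactly the members of $UG_\lambda(n)$, numbering $C_n^\lambda$. I do not expect a genuine obstacle here: the whole proof is bookkeeping that routes each listed family back to $UG_R(n)$ through already-established bijections and precise-indexing theorems. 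The only steps carrying content beyond citation are those in item (\ref{theorem18.1.4}) (and, implicitly, item (\ref{theorem18.1.6})), where the count is asserted at the level of \emph{distinct polynomials} and so relies on the ``no accidental equalities'' phenomena of Section \ref{737} --- that is, on Theorems \ref{theorem737.1} and \ref{theorem737.2} and Corollary \ref{newcor737}, valid for gapless core and flag Schur polynomials --- rather than on a bijection of combinatorial objects; item (\ref{theorem18.1.6}) additionally leans on a theorem proved only in the sequel \cite{PW4}.
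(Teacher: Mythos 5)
Your proposal is correct and follows essentially the same route as the paper's own proof: anchor everything on $|UG_R(n)| = C_n^R$ via Proposition \ref{prop320.2}(\ref{prop320.2.2}), then cite Corollary \ref{cor604.8} for (i)--(ii), Corollary \ref{cor608.6} with Proposition \ref{prop623.2} for (iii), Proposition \ref{prop737}(\ref{prop737.2}) with Corollary \ref{newcor737} (and Theorem \ref{theorem737.1}) for (iv), Theorems \ref{theorem721} and \ref{theorem737.2} for (v), and defer (vi) to \cite{PW4}. The paper's proof is just a terser version of the same bookkeeping, with the same reliance on the sequel for item (vi).
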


\begin{proof}By Proposition \ref{prop320.2}(\ref{prop320.2.2}), the number of gapless $R$-tuples is $C_n^R$.  Use Corollary \ref{cor604.8} to confirm (i) and (ii).  Then use Corollary \ref{cor608.6} and Proposition \ref{prop623.2} for (iii).  Use Proposition \ref{prop737}(\ref{prop737.2}) and Corollary \ref{newcor737} to confirm (iv) and Theorems \ref{theorem721} and \ref{theorem737.2} for (v).  Part (vi) is Proposition 8.3 of \cite{PW4}.  \end{proof}

In \cite{PW3} we defined the \emph{total parabolic Catalan number $C_n^\Sigma$} to be $\sum C_n^R$, sum over $R \subseteq [n-1]$.

\begin{cor}\label{cor18.2}  For each $m \geq 1$, the total parabolic Catalan number $C_n^\Sigma$ is the number of flag Schur polynomials in $n$ variables on shapes with at most $n-1$ rows in which there are $m$ columns of each column length that is present.  \end{cor}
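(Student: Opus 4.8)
The plan is to display $C_n^\Sigma = \sum_{R \subseteq [n-1]} C_n^R$ as a count of flag Schur polynomials organized shape by shape. First I would record the bookkeeping bijection between the subsets $R \subseteq [n-1]$ and the shapes named in the statement. Given $R \subseteq [n-1]$ with elements $q_1 < \dots < q_r$, let $\lambda^{(R)}$ be the partition whose shape has exactly $m$ columns of length $q_h$ for each $h \in [r]$ and no further columns; explicitly $\lambda^{(R)}_i = m \cdot |\{ h \in [r] : q_h \geq i \}|$ for $i \in [n]$. Since $m \geq 1$, each $q_h$ actually occurs as a column length, so $R_{\lambda^{(R)}} = R$; and since $R \subseteq [n-1]$, the shape $\lambda^{(R)}$ has at most $n-1$ rows. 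Conversely, every shape with at most $n-1$ rows in which each present column length occurs exactly $m$ times equals $\lambda^{(R)}$, where $R$ is its set of (necessarily non-trivial) column lengths. Hence $R \mapsto \lambda^{(R)}$ is a bijection from the power set of $[n-1]$ onto this family of shapes.

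Next I would invoke the enumeration already established. For the fixed partition $\lambda := \lambda^{(R)}$, Theorem~\ref{theorem18.1}(\ref{theorem18.1.4}) says that the flag Schur polynomials $s_\lambda(\phi;x)$ with $\phi \in UF_\lambda(n)$ that are pairwise distinct as polynomials number exactly $C_n^{R_\lambda} = C_n^R$.

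It remains only to add these per-shape counts without double counting, that is, to know that a flag Schur polynomial occurring on one of these shapes cannot equal, as a polynomial, a flag Schur polynomial on a different one. That is precisely Proposition~\ref{prop737}(\ref{prop737.1}): $s_\lambda(\beta;x) = s_{\lambda'}(\beta';x)$ forces $\lambda = \lambda'$. So the set of all flag Schur polynomials occurring on the named shapes is partitioned according to the (pairwise distinct) partitions $\lambda^{(R)}$, and its cardinality is $\sum_{R \subseteq [n-1]} C_n^R = C_n^\Sigma$.

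There is no genuinely hard step here; the argument is purely organizational, assembled from Theorem~\ref{theorem18.1}(\ref{theorem18.1.4}) and Proposition~\ref{prop737}(\ref{prop737.1}). The one point deserving care is the hypothesis $m \geq 1$, which is what upgrades $R \mapsto \lambda^{(R)}$ from a surjection onto a smaller collection into a genuine bijection, and in particular secures $R_{\lambda^{(R)}} = R$. The boundary case $R = \emptyset$ contributes the empty shape, whose only flag Schur polynomial is the constant $1$, in agreement with $C_n^\emptyset = 1$.
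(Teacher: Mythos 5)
Your argument is correct and is exactly the reasoning the paper leaves implicit: the bijection $R \mapsto \lambda^{(R)}$ between subsets of $[n-1]$ and shapes with $m$ columns of each present column length, the per-shape count $C_n^{R}$ from Theorem \ref{theorem18.1}(\ref{theorem18.1.4}), and Proposition \ref{prop737}(\ref{prop737.1}) to rule out cross-shape equalities before summing to $C_n^\Sigma$. Your attention to $m \geq 1$ guaranteeing $R_{\lambda^{(R)}} = R$ and to the $R = \emptyset$ boundary case is a welcome bit of extra care, but the approach matches the paper's.
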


\noindent \textbf{Acknowledgements.}  Feedback from Vic Reiner encouraged us to complete our analysis of the related results in \cite{RS} and of their relationship to our results.  We thank David Raps for assistance in preparing the precursor paper \cite{PW2}, and the referee for alerting us to the existence of \cite{KST}.




\begin{thebibliography}{99}





\bibitem[BB]{BB}Bj\"{o}rner, A., Brenti, F., Combinatorics of Coxeter Groups, Springer Science and Business Media Inc., 2000.

\bibitem[CDZ]{CDZ}Chen, W.Y.C., Dai, A.Y.L., Zhou, R.D.P., Ordered partitions avoiding a permutation of length 3, Euro. J. of Combin. \textbf{36}, 416-424 (2014).

\bibitem[GGHP]{GGHP}Godbole, A., Goyt, A., Herdan, J., Pudwell, L., Pattern avoidance in ordered set partitions, Ann. Combin. \textbf{18}, 429-445 (2014).

\bibitem[GV]{GV}Gessel, I., Viennot, X. G., Determinants, paths, and plane partitions, preprint (1989), available online at http://citeseerx.ist.psu.edu/viewdoc/summary?doi=10.1.1.37.331, 2017.



\bibitem[KST]{KST}Kiritchenko, V.A., Smirnov, E. Y., and Timorin, V.A., Schubert calculus and Gelfand-Zetlin polytopes, Russian Math. Surveys \textbf{67}, 685-719, (2012).

\bibitem[PS]{PS}Postnikov, A., Stanley, R., Chains in the Bruhat order, J. Algebr. Comb. \textbf{29}, 133-174 (2009).

\bibitem[PW1]{PW1}Proctor, R., Willis, M., Semistandard tableaux for Demazure characters (key polynomials) and their atoms, Euro. J. of Combin. \textbf{43}, 172-184 (2015).
\bibitem[PW2]{PW2}Proctor, R., Willis, M., Parabolic Catalan numbers count flagged Schur functions; Convexity of tableau sets for Demazure characters, arXiv: 1612.06323.

\bibitem[PW3]{PW3}Proctor, R., Willis, M., Convexity of tableau sets for type A Demazure characters (key polynomials), parabolic Catalan numbers, arXiv: 1706.03094.

\bibitem[PW4]{PW4}Proctor, R., Willis, M., Parabolic Catalan numbers count efficient inputs for Gessel-Viennot flagged Schur function determinant, arXiv:  1701.01182.

\bibitem[RS]{RS}Reiner, V., Shimozono, M., Key polynomials and a flagged {L}ittlewood-{R}ichardson rule, J. Combin. Theory Ser. A \textbf{70}, 107-143 (1995).


\bibitem[St1]{St1}Stanley, R., Enumerative Combinatorics Volume 1, Cambridge University Press, 1997.

\bibitem[St2]{St2}Stanley, R., Enumerative Combinatorics Volume 2, Cambridge University Press, 1999.



\bibitem[Wi1]{Wi1}Willis, M., A direct way to find the right key of a semistandard Young tableau, Ann. Combin. \textbf{17}, 393-400 (2013).








\end{thebibliography}
\end{document}